\theoremstyle{plain}
\newtheorem{thm}{Theorem}[section]
\newtheorem{cor}[thm]{Corollary}
\newtheorem{pro}[thm]{Proposition}
\newtheorem{lem}[thm]{Lemma}
\theoremstyle{definition}
\newtheorem{dfn}[thm]{Definition}
\newtheorem{example}[thm]{Example}
\newcommand{\Ghat}{\widehat{G}}
\theoremstyle{remark}
\newtheorem{rmk}[thm]{Remark}
\newtheorem{conjecture}[thm]{Conjecture}
\newtheorem*{conjecture*}{Conjecture}
\newtheorem*{assumption*}{Assumption}
\newtheorem{construction}[thm]{Construction}
\newtheorem{setting}[thm]{Setting}
\DeclareSymbolFontAlphabet{\mathbb}{AMSb}
\DeclareSymbolFontAlphabet{\mathbbl}{bbold}
\def\Hom{\mathop{\textup{Hom}}\nolimits}
\def\Nt{\mathop{\textup{Nt}}\nolimits}
\def\BBW{\mathop{\textup{BW}}\nolimits}
\def\Gal{\mathop{\textup{Gal}}\nolimits}
\def\End{\mathop{\textup{End}}\nolimits}
\def\Perf{\mathop{\bf Perf}\nolimits}
\def\perf{\mathop{\textup{perf}}\nolimits}
\def\Spec{\mathop{\bf Spec}\nolimits}
\def\Spa{\mathop{\bf Spa}\nolimits}
\def\Spf{\mathop{\bf Spf}\nolimits}
\def\Spd{\mathop{\bf Spd}\nolimits}
\def\Rep{\mathop{\bf Rep}\nolimits}
\def\Sht{\mathop{\bf Sht}\nolimits}
\def\Coh{\mathop{\bf Coh}\nolimits}
\def\Isoc{\mathop{\bf Isoc}\nolimits}
\def\Shv{\mathop{\bf Shv}\nolimits}
\def\IndShv{\mathop{\bf IndShv}\nolimits}
\def\Loc{\mathop{\bf Loc}\nolimits}
\def\CohSpr{\mathop{\bf CohSpr}\nolimits}
\def\Flex{\mathop{\bf Flex}\nolimits}
\def\Igs{\mathop{\bf Igs}\nolimits}
\def\Ig{\mathop{\bf Ig}\nolimits}
\def\Div{\mathop{\bf Div}\nolimits}
\def\Bun{\mathop{\bf Bun}\nolimits}
\def\ULA{\mathop{\bf ULA}\nolimits}
\def\can{\mathop{\textup{can}}\nolimits}
\def\unip{\mathop{\textup{unip}}\nolimits}
\def\spec{\mathop{\textup{spec}}\nolimits}
\def\Adm{\mathop{\rm Adm}\nolimits}
\def\IndCoh{\mathop{\bf IndCoh}\nolimits}
\def\GL{\mathop{\textup{GL}}\nolimits}
\def\Ad{\mathop{\textup{Ad}}\nolimits}
\def\GSP{\mathop{\textup{GSP}}\nolimits}
\def\Fib{\mathop{\textup{Fib}}\nolimits}
\def\Gr{\mathop{\textup{Gr}}\nolimits}
\newcommand{\Hck}{\bm{\mathscr{H}}\!\bm{ck}}
\def\Res{\mathop{\rm Res}\nolimits}
\def\GU{\mathop{\textup{GU}}\nolimits}
\def\et{{\textup{\acute et}}}
\def\red{{\textup{red}}}
\def\ad{{\textup{ad}}}
\def\Sh{\mathop{\bf Sh}\nolimits}
\newcommand{\BA}{{\mathbb{A}}}
\newcommand{\BF}{{\mathbb{F}}}
\newcommand{\BQ}{{\mathbb{Q}}}
\newcommand{\BZ}{{\mathbb{Z}}}
\newcommand{\CA}{{\cal A}}
\newcommand{\CB}{{\cal B}}
\newcommand{\CD}{{\cal D}}
\newcommand{\CE}{{\cal E}}
\newcommand{\CG}{{\cal G}}
\newcommand{\CH}{{\cal H}}
\newcommand{\CI}{{\cal I}}
\newcommand{\CM}{{\cal M}}
\newcommand{\CO}{{\cal O}}
\newcommand{\CP}{{\cal P}}
\newcommand{\CS}{{\cal S}}
\newcommand{\CY}{{\cal Y}}
\title{Igusa stack for some exceptional Shimura varieties}
\date{\today}
\author[1]{Ali Partofard\thanks{\scriptsize \href{mailto:hadi@ipm.ir}{alipartofard@ipm.ir}}}
\begin{document}

\maketitle
\begin{abstract}
We study the integral models of meta-unitary Shimura varieties through the lens of Scholze’s fiber product conjecture. Reformulating Bültel’s original construction in terms of moduli stacks of Shtukas and Igusa stacks, we prove the validity of the fiber product formula for this class of non-abelian type Shimura varieties, thereby generalizing the works of Zhang and Daniels, Van Hoften, Kim, and Zhang. We utilize this geometric description to derive local-global compatibility results and, adapting the strategy of Zhu and Yang, apply the unipotent categorical local Langlands correspondence to prove a general vanishing theorem for the generic part of the cohomology of meta-unitary Shimura varieties.
\end{abstract}

\tableofcontents
\section{Introduction}
Let $(G,\mu)$ be a Shimura datum and let $p$ be a fixed prime number. A central problem in the theory of Shimura varieties, as articulated in part of the Langlands--Rapoport conjecture, is the construction of "nice" integral models at hyperspecial level. By a \emph{nice} integral model, we mean one that is smooth over the ring of integers, satisfies a suitable extension property, and whose special fiber admits a group-theoretic description reflecting the structure of the Shimura datum. Understanding these integral models is crucial both for arithmetic applications and for the study of the reduction of Shimura varieties modulo $p$.

The traditional approach to constructing integral models is to embed a Shimura variety into a Siegel modular variety and define its integral model as the closure inside the integral model of the Siegel variety. Using this method, one can construct integral models for all Hodge-type Shimura varieties, and, via Deligne's formalism, even for all abelian-type Shimura varieties\cite{kisin2010integral}. However, this approach has limitations: it does not extend to all Shimura varieties, and it provides little direct information about the $p$-adic geometry of the resulting integral models.

Motivated by Mantovan's formula, Scholze's fiber product conjecture proposes an alternative way to describe—or even construct—Shimura varieties using Shtukas and Igusa stacks. Roughly speaking, the Igusa stack describes the special fiber of the integral model, while the stack of Shtukas controls the deformation to mixed characteristic. This perspective has the advantage of directly connecting the integral model to key objects in $p$-adic geometry, such as the stack $\Bun_G$, the affine Grassmannian, and the integral local Shimura variety.

Mingjia Zhang proved the Scholze fiber product conjecture for certain PEL-type Shimura varieties\cite{zhang2023pel}, and later Daniels, Van Hoften, Kim, and Zhang generalized it to all Hodge-type Shimura varieties\cite{daniels2024Igusa}. This construction has several immediate consequences, including vanishing results for the cohomology of Hodge-type Shimura varieties and local-global compatibility between the cohomology of Hodge-type Shimura varieties and the Scholze–Fargues local Langlands correspondence.

Building on this geometric framework, Zhu and Yang\cite{yang2025generic} employed the fiber product description in conjunction with the unipotent categorical local Langlands correspondence\cite{hemo2021unipotent} to establish a general vanishing theorem for the generic part of the cohomology of Shimura varieties of Abelian type.

In 2008, B\"ultel constructed an integral model for a class of Shimura varieties called \emph{meta-unitary Shimura varieties}\cite{bultel2008pel}, which are not necessarily of abelian type. Remarkably, this construction anticipates the philosophy of Scholze's fiber product conjecture: he first constructs the special fiber of the Shimura variety and then defines the integral model as a fiber product between this special fiber and the stack of $(G,\mu)$-displays. In this paper, among other things, we reformulate B\"ultel's construction in the modern language of Shtukas and integral local Shimura varieties.

The aim of this paper is to prove the Scholze fiber product conjecture and establish other results of Daniels, Van Hoften, Kim, and Zhang in the setting of meta-unitary Shimura varieties; furthermore, we employ the method of Zhu and Yang to establish the vanishing of the generic part of the cohomology.

\subsection{Scholze Fiber Product Conjecture}

Let us now recall the Scholze fiber product conjecture in more detail. We present the conjecture in the formulation given by Zhang \cite{zhang2023pel}, Let $(G,\mu)$ be a Shimura datum, and denote by $[\mu^{-1}]$ the conjugacy class of minuscule cocharacters associated with $(G,\mu)$, with reflex field $E_0$. Fix a rational prime $p$, and let $E$ be the completion of $E_0$ at a place above $p$. Choose a compact open subgroup $K = K_p K^p \subset G(\mathbb{A}_f)$, where $K_p \subset G(\mathbb{Q}_p)$ and $K^p$ is the level structure away from $p$ and let $K=K_pK^p$. Denote by
$
\Sh_{K}(G,\mu)^\diamondsuit
$
the Shimura variety at level $K$, viewed as a diamond over $\Spd(E)$, and define
$
\Sh_{K^p}(G,\mu)^\diamondsuit := \varprojlim_{K_p} \Sh_{K_pK^p}(G,\mu)^\diamondsuit
$
as the inverse limit over open compact subgroups $K_p$.

On the local side, let $\Gr_G$ denote the $B^+_{\mathrm{dR}}$-affine Grassmannian associated with $G_{\mathbb{Q}_p}$, also viewed as a diamond over $\Spd(E)$. Fix a representative cocharacter $\mu \in [\mu]$, dominant with respect to a chosen Borel and maximal torus over $\overline{\mathbb{Q}}_p$, and denote by $\mathrm{Gr}_{G,\mu^{-1}}$ the corresponding Schubert cell in $\mathrm{Gr}_G$.

Let $\Bun_G$ be the (small) $v$-stack of $G_{\mathbb{Q}_p}$-bundles on the Fargues--Fontaine curve. There are two natural maps:
\[
 \pi_{\mathrm{HT}}\colon \Sh_{K}(G,\mu)^\diamondsuit\longrightarrow \Gr_G,
\qquad
\mathrm{BL}\colon \Gr_G \longrightarrow \Bun_G,
\]
where $\pi_{\mathrm{HT}}$ is the Hodge--Tate period map, whose image lies in $\Gr_{G,\mu^{-1}}$, and $\mathrm{BL}$ is the Beauville--Laszlo morphism sending a modification of the trivial $G$-bundle on the curve $\CY$ to the corresponding $G$-bundle on the Fargues--Fontaine curve.

The Scholze fiber product conjecture asserts that the Shimura variety $\Sh(G,\mu)^\diamondsuit$ can be recovered as a fiber product over $\Bun_G$, in a way that cleanly separates the local $p$-adic geometry (governed by $\Gr_{G,\mu^{-1}}$) from the global data and prime-to-$p$ level structure.

\begin{conjecture}[Scholze, cf.~\cite{zhang2023pel}]\label{mainconjecture}
There exists a small $v$-stacks $\{ \Igs_{K^p,G,\mu} \}$, called \emph{Igusa stack}, equipped with morphisms
\[
\Sh_{K}(G,\mu)^\diamondsuit \xrightarrow{\; \bar\pi_{\mathrm{red}} \;} \Igs_{K^p,G,\mu},
\qquad
\Igs_{K^p,G,\mu} \xrightarrow{\; \bar\pi_{\mathrm{Crys}} \;} \Bun_G,
\]
satisfying the following properties:
\begin{enumerate}
    \item \textbf{Cartesian diagram:} there is a Cartesian (2-)square
    \[
    \begin{tikzcd}
    \Sh_{K^p}(G,\mu)^\diamondsuit \arrow[r, "\pi_{\mathrm{HT}}"] \arrow[d] & \Gr_{G,\mu^{-1}} \arrow[d, "\mathrm{BL}"] \\
    \Igs_{K^p,G,\mu} \arrow[r, "\bar\pi_{\mathrm{HT}}"] & \Bun_{G,\mu^{-1}}
    \end{tikzcd}
    \]
    furthermore for each finite level $K_p\subset G(\mathbb{Q}_p)$ we have the cartesian square:
    \[
    \begin{tikzcd}
    \Sh_{K}(G,\mu)^\diamondsuit \arrow[r, "\pi_{\mathrm{HT}}"] \arrow[d] & {[\Gr_{G,\mu^{-1}}/K_p]} \arrow[d, "\mathrm{BL}"] \\
    \Igs_{K^p,G,\mu} \arrow[r, "\bar\pi_{\mathrm{HT}}"] & \Bun_{G,\mu^{-1}}
    \end{tikzcd}
    \]
    \item \textbf{Hecke action:} The system $\{ \mathrm{Igs}_{K^p,G,\mu} \}$ carries an action of $G(\mathbb{A}_f)$, with $G(\mathbb{Q}_p)$ acting trivially, compatibly with the Hecke actions on the Shimura varieties and period maps. One can recover the Hecke action on the tower of Shimura varieties by the Hecke action on the Igusa stacks and the action of $G(\mathbb{Q}_p)$ on $Gr_{G,\mu^{-1}}$.
    \item \textbf{Minimal compactification:} There exist compactifications $\mathrm{Igs}^*_{K^p}$ extending $\mathrm{Igs}_{K^p}$ such that the Cartesian diagram in (1) extends to include the minimal compactification $S^*_{K^p}$ of the Shimura variety.
    \item \textbf{Integral models:} If $G$ admits a smooth parahoric integral model $\mathcal{G}$ over $\mathbb{Z}_p$, then at level $K_p = \mathcal{G}(\mathbb{Z}_p)$, the Cartesian square in (1) should have an analogue at the level of integral models:
    \[
    \begin{tikzcd}
    \CS_{K,G,\mu}^{\diamondsuit} \arrow[r, "\pi_{\mathrm{Crys}}"] \arrow[d] & \Sht_{G,\mu,K_p} \arrow[d, "\mathrm{BL}"] \\
    \Igs_{K^p,G,\mu} \arrow[r, "\bar\pi_{\mathrm{HT}}"] & \Bun_{G,\mu^{-1}}
    \end{tikzcd}
    \]
    \item \textbf{Functoriality:} The construction of $\Igs_{K^p,G,\mu}$ is natural in the Shimura datum $(G,\mu)$.
\end{enumerate}
\end{conjecture}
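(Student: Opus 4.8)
The plan is to take B\"ultel's integral model for a meta-unitary Shimura datum $(G,\mu)$ as the starting point and to re-express his construction in the language of $p$-adic shtukas. Recall that B\"ultel builds the integral model $\CS_{K,G,\mu}$ as a fiber product of a characteristic-$p$ moduli stack (the ``special fiber'') with the stack of $(G,\mu)$-displays over the former \cite{bultel2008pel}. The first step is a \emph{dictionary step}: using the comparison between $(G,\mu)$-displays and $p$-adic shtukas --- prismatic $G$-$\mu$-displays, in the sense relating $\BZ_p$-lattices in $B_{\mathrm{dR}}^+$-affine Grassmannians to integral local Shimura varieties --- identify B\"ultel's display stack at level $K_p=\CG(\BZ_p)$ with the integral local shtuka space $\Sht_{G,\mu,K_p}$, its diamond with $\Gr_{G,\mu^{-1}}$ over $\Spd E$, and the resulting Beauville--Laszlo map with $\mathrm{BL}$. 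This pins down the local side of all the squares and makes property (4) of Conjecture~\ref{mainconjecture} hold essentially by construction, once the Igusa stack is in place.

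The second step is to construct $\Igs_{K^p,G,\mu}$ \emph{directly} from B\"ultel's special fiber, with no recourse to an embedding into a Siegel variety (unavailable since $(G,\mu)$ need not be of abelian type). I would define $\Igs_{K^p,G,\mu}$ as the small $v$-stack obtained by $v$-sheafifying the groupoid presented by the perfection of the special fiber together with its Igusa tower, the tower trivializing the universal $(G,\mu)$-display along central leaves. The foundational inputs here are: (a) the Igusa towers for meta-unitary data become perfectoid in the limit, which I would extract from B\"ultel's group-theoretic description of the special fiber and perfectoid-ness of central leaves; and (b) a Mantovan-type product structure near each Newton stratum, needed both for the quotient to be well behaved and for the later compatibility with $\Bun_G$. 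One then defines $\bar\pi_{\mathrm{Crys}}\colon\Igs_{K^p,G,\mu}\to\Bun_G$ as the map attaching to a point the $G$-bundle on the Fargues--Fontaine curve produced by the universal display via Beauville--Laszlo, and $\bar\pi_{\mathrm{red}}\colon\CS_{K,G,\mu}^{\diamondsuit}\to\Igs_{K^p,G,\mu}$ as the tautological ``forget the shtuka'' morphism.

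The third and central step is the Cartesian square in property (1): one must establish
\[
\Sh_{K^p}(G,\mu)^{\diamondsuit}\;\xrightarrow{\ \sim\ }\;\Igs_{K^p,G,\mu}\times_{\Bun_{G,\mu^{-1}}}\Gr_{G,\mu^{-1}}
\]
together with its finite-level variant over $[\Gr_{G,\mu^{-1}}/K_p]$. The approach is to identify both sides, on perfectoid test objects, as moduli of triples consisting of a prime-to-$p$ level structure, a crystalline/display datum, and a Hodge--Tate (equivalently de Rham) lattice refining it: a point of $\Sh_{K^p}$ determines, through B\"ultel's moduli interpretation and the \'etale--crystalline comparison, exactly its mod-$p$ crystalline realization --- a point of $\Igs_{K^p,G,\mu}$ --- together with a lattice in the attached $B_{\mathrm{dR}}^+$-Grassmannian lying over the same $\Bun_G$-point, and Grothendieck--Messing/display deformation theory identifies the space of such lattices with $\Gr_{G,\mu^{-1}}$. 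On $\overline{\BF}_p$-points this is B\"ultel's Dieudonn\'e-theoretic parametrization; the work is to promote it to an isomorphism of $v$-stacks, using that at finite level both sides are representable by well-behaved diamonds, that the map is bijective on points, and that it induces an isomorphism of deformation functors. I expect \emph{this} step to be the main obstacle: showing that the deformation theory of B\"ultel's model is controlled precisely by $\Gr_{G,\mu^{-1}}$, and that $\bar\pi_{\mathrm{Crys}}$ is a morphism of $v$-stacks making the square commute, requires the display-to-shtuka comparison beyond the level of points and careful control of the $v$-topology on $\Igs_{K^p,G,\mu}$, with no abelian-type model from which to inherit descent.

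The remaining properties follow by now-standard arguments once the above is set up, adapting \cite{zhang2023pel,daniels2024Igusa}. The $G(\mathbb{A}_f)$-action on $\{\Igs_{K^p,G,\mu}\}$ with $G(\BQ_p)$ acting trivially (property (2)) comes from the prime-to-$p$ level structure and the fact that the Igusa stack retains only the isogeny/crystalline datum at $p$, not the integral lattice; compatibility with the Hecke action on Shimura varieties and period maps is then formal, and the tower of Shimura varieties is recovered from $\Igs_{K^p,G,\mu}$ and the $G(\BQ_p)$-action on $\Gr_{G,\mu^{-1}}$ via the Cartesian square. Property (3) follows by running the construction through B\"ultel's (or a Madapusi Pera--type) minimal and toroidal compactifications of $\CS_{K,G,\mu}$, using that the Hodge--Tate period map extends over the boundary with product-like boundary charts. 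Property (5) is checked directly: B\"ultel's construction, the display stack, and the Igusa tower are all functorial in $(G,\mu)$, and $\bar\pi_{\mathrm{red}},\bar\pi_{\mathrm{Crys}}$ are defined by universal properties, so morphisms of meta-unitary Shimura data induce compatible morphisms of all the stacks in the diagram.
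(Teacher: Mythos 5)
Your skeleton (display--shtuka dictionary, construct $\Igs_{K^p,G,\mu}$, prove the Cartesian square, deduce the rest) matches the paper's, but your second step contains a genuine misdirection that undermines the central one. You insist on building the Igusa stack ``directly from B\"ultel's special fiber, with no recourse to an embedding,'' on the grounds that $(G,\mu)$ is not of abelian type. But B\"ultel's construction itself, and the paper's proof, depend essentially on an embedding: while $(G,\mu)$ admits no Siegel embedding, the special fiber is by definition a closed subscheme cut out inside the PEL-type moduli problem $\mathcal{S}^{\textup{tan}}_{K^p}$ via the functor $\Flex$ into the stack of $(\tilde G,\mu^\triangleright)$-displays, where $\mu^\triangleright$ has the same Galois average as $\mu$. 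The paper exploits exactly this: the closed immersions $\Igs_{b,G,\mu}\hookrightarrow\Igs_{\tilde b,\tilde G,\mu^\triangleright}$ and $\Igs_{G,\mu}\hookrightarrow\Igs_{\tilde G,\mu^\triangleright}$ are what give representability of the local Igusa stack by a perfect affine scheme, the $v$-cover property of $\bar{\CS}_K\to\Igs_{G,\mu,K^p}$, and the auxiliary Cartesian square over $\Bun_{G,\mu^{-1}}$ --- all imported from the Hodge-type results of Daniels--van Hoften--Kim--Zhang. The items you list as foundational inputs (perfectoidness of Igusa towers, Mantovan-type product structure) are only known through such an ambient PEL/Hodge-type datum; for a genuinely non-abelian-type datum there is no independent proof, so your ``direct'' route leaves precisely the step you yourself flag as the main obstacle unresolved.

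Two further discrepancies. First, the paper does not define $\Igs_{K^p,G,\mu}$ via Igusa towers along central leaves; it defines it as the $v$-stackification of the groupoid of points of the integral model with \emph{formal quasi-isogenies} as morphisms (the DvHKZ formulation), using rigidity of quasi-isogenies to make this independent of the pseudo-uniformizer. With this definition, the integral Cartesian square of property (4) is nearly formal: B\"ultel's integral model is \emph{by construction} a fiber product of its special fiber with the stack of $(G,\mu)$-displays, i.e.\ with $\Sht_{G,\mu}$ after the dictionary step, and one only needs the square relating $\bar{\CS}_K$ to $\Igs$ over $\Bun_{G,\mu^{-1}}$, obtained from the embedding. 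Your plan instead proves the generic-fiber square by a moduli-of-triples comparison and Grothendieck--Messing theory, promoted from points to $v$-stacks --- a substantially harder and unsubstantiated argument, and in the wrong order (the paper deduces the generic-fiber squares from the integral one). Second, property (3) needs no compactification argument at all: meta-unitary Shimura varieties are projective, so invoking B\"ultel or Madapusi Pera compactifications is both unnecessary and unavailable.
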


Let us elaborate on how the existence of the integral model constructed by B\"{u}ltel is consistent with the general philosophy of Scholze's fiber product conjecture. Assume that the hypothetical Igusa stack $\Igs_G$ exists over the stack $\Bun_G$, and furthermore, assume that it is functorial in $G$. The Igusa stack should admit a stratification by substacks $\Igs_{G,\mu}$. Moreover, because the Igusa stack is insensitive to quasi-isogenies, we expect that $\Igs_{G,\mu}$ depends only on the Galois average of $\mu$.

Assume that $(G,\mu)$ is a Shimura datum and that we have an embedding $(G,\mu)\to (\GSP,\mu)$, where $\mu$ is not a minuscule cocharacter for $\GSP$. However, assume there exists another cocharacter $\mu^\triangleright$ of $\GSP$ such that $(\GSP,\mu^\triangleright)$ is a Shimura datum and that $\mu$ and $\mu^\triangleright$ have the same Galois average. We define the Igusa stack for $(G,\mu)$ as a substack of $\Igs_{\GSP,\mu^\triangleright}$ consisting of points whose associated $\ell$-adic local systems preserve suitable tensors. Using the Cartesian square mentioned above, we then form a candidate for the integral model. To validate this construction, we must prove that the Igusa stack is non-empty. Once we show that the Igusa stack contains enough elliptic points, we can apply Varshavsky's characterization of Shimura varieties to prove that the generic fiber of this candidate is, in fact, the desired Shimura variety.

In this paper, we establish Scholze’s fiber product conjecture for meta-unitary Shimura varieties. Since these Shimura varieties are already proper, no compactification is required in our setting. We construct the associated Igusa stack and verify that it satisfies all the properties predicted by Scholze’s conjecture.

\begin{thm}\label{fiberproductconjecture}
Let $(G,\mu)$ be a meta-unitary Shimura datum, and let $K^p \subset G(\mathbb{A}_f^{\,p})$ be a compact open subgroup away from $p$. There exists an Artin $v$-stack $\Igs_{K^p,G,\mu}$ which, for every prime $\ell \neq p$, is $\ell$-cohomologically smooth of dimension $0$, and whose dualizing complex of $\Igs_{K^p,G,\mu}(G,\mu) \times \Spd(\mathbb{F}_p)$ is isomorphic to $\mathbb{F}_p[0]$. Moreover, this stack satisfies all the properties stated in Conjecture \ref{mainconjecture}.
\end{thm}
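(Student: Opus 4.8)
The plan is to follow the strategy of Daniels--Van Hoften--Kim--Zhang, bootstrapping from the Siegel case via the auxiliary Shimura datum $(\GSP,\mu^\triangleright)$ whose Galois average agrees with that of $\mu$. I would first recall B\"ultel's construction: the integral model $\CS_{K,G,\mu}$ is built as a fiber product of a ``special fiber'' object with the stack of $(G,\mu)$-displays (equivalently, via the Shtuka-theoretic dictionary, with $\Sht_{G,\mu,K_p}$). Reinterpreting this in the language of the excerpt, one defines $\Igs_{K^p,G,\mu}$ as the substack of $\Igs_{\GSP,\mu^\triangleright}$ (the Siegel Igusa stack, whose existence and properties are furnished by \cite{daniels2024Igusa}) cut out by the condition that the associated $\ell$-adic local systems preserve the tensors defining $G\subset \GSP$. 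Concretely, for each $K^p$ one picks a Hodge embedding datum, a collection of tensors $(s_\alpha)$, and sets
\[
\Igs_{K^p,G,\mu} := \Igs_{K^p_{\GSP},\GSP,\mu^\triangleright}\times_{[\ast/\underline{\GSP(\BA_f^p)}]}[\ast/\underline{G(\BA_f^p)}]
\]
refined by the tensor-preservation locus; one then transports $\bar\pi_{\red}$ and $\bar\pi_{\mathrm{Crys}}$ along this embedding, and pulls back the morphism $\mathrm{BL}\colon \Gr_{G,\mu^{-1}}\to\Bun_{G,\mu^{-1}}$ from its $\GSP$-analogue using functoriality of the affine Grassmannian and of $\Bun$ in $G$.

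Next I would verify the Cartesian squares of Conjecture \ref{mainconjecture}(1). The key point is that the outer rectangle (combining the $G$-square with the inclusion $G\hookrightarrow\GSP$) is the known Siegel Cartesian square of \cite{daniels2024Igusa} intersected with the tensor conditions, and that forming the fiber product commutes with imposing the tensor-preservation locus on both $\Gr_{G,\mu^{-1}}\hookrightarrow\Gr_{\GSP,\mu^\triangleright}$ and on $\Bun_{G,\mu^{-1}}\hookrightarrow\Bun_{\GSP,\mu^\triangleright}$. This is where B\"ultel's original fiber-product description enters crucially: it is precisely the statement that the integral model is the fiber product of its special fiber (which maps to $\Igs$) with $\Sht_{G,\mu,K_p}$ over $\Bun_{G,\mu^{-1}}$, so that property (4) for integral models follows essentially by construction, and property (1) follows by passing to generic fibers and taking the limit over $K_p$. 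The Hecke action in (2) is inherited from the Siegel tower, with $G(\BQ_p)$ acting trivially on $\Igs$ because it acts only through the local factor $[\Gr_{G,\mu^{-1}}/K_p]$; property (3) is vacuous since meta-unitary Shimura varieties are proper; property (5) is immediate from the tensor-theoretic definition.

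For the cohomological smoothness and the dualizing complex: I would argue that $\Igs_{\GSP,\mu^\triangleright}$ is $\ell$-cohomologically smooth of dimension $0$ with dualizing complex $\BF_p[0]$ over $\Spd(\BF_p)$ by \cite{daniels2024Igusa}, and that the inclusion $\Igs_{K^p,G,\mu}\hookrightarrow\Igs_{\GSP,\mu^\triangleright}$ is, locally on a chart, cut out by equations matching the codimension of $G$ in $\GSP$ in a way that is itself $\ell$-cohomologically smooth --- more precisely, one checks that the $G$-display stack $\to$ $\GSP$-display stack map is a locally closed immersion of cohomologically smooth stacks with the expected relative dualizing complex, using that both are smooth Artin $v$-stacks (the display stacks are quotients of smooth affine schemes/formal schemes), and transfers this along the Cartesian square. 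Alternatively, and perhaps more cleanly, one checks $\ell$-cohomological smoothness of $\Igs_{K^p,G,\mu}\to\Spd(\BF_p)$ directly from the Cartesian square in (4) together with the known $\ell$-cohomological smoothness of $\Sht_{G,\mu,K_p}\to\Bun_{G,\mu^{-1}}$ and of the integral model $\CS_{K,G,\mu}$, using descent of smoothness along the surjection from the Shtuka side.

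\medskip

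The main obstacle I anticipate is the \emph{non-emptiness} of $\Igs_{K^p,G,\mu}$ and, relatedly, showing that the generic fiber of the resulting candidate integral model really is $\Sh_K(G,\mu)$ rather than some proper closed subspace or an empty stack. As indicated in the introduction, the route is to exhibit enough elliptic (CM) points in the Igusa stack --- these come from CM lifts, where the tensor conditions can be checked by hand via the theory of CM points in the Siegel case --- and then invoke Varshavsky's characterization of Shimura varieties: a smooth proper scheme over $E$ with the correct Hecke action and the correct set of special points, of the correct dimension, must be the Shimura variety. Making the elliptic-point count work in the meta-unitary setting, where $(G,\mu)$ is genuinely not of abelian type, is the delicate part, because one cannot directly appeal to a moduli interpretation; instead one has to produce these points on the display/Shtuka side using B\"ultel's construction and then match them with CM abelian varieties with extra structure on the Siegel side. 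A secondary technical point is checking that the tensor-preservation locus inside $\Igs_{\GSP,\mu^\triangleright}$ is a \emph{well-behaved} (locally closed, cohomologically smooth) substack and not merely an abstract $v$-sheaf --- this requires knowing enough about the local structure of the Siegel Igusa stack, which should be extractable from \cite{daniels2024Igusa} together with the deformation theory of $(G,\mu)$-displays.
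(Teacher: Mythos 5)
There is a genuine gap at the heart of your construction. You define $\Igs_{K^p,G,\mu}$ as a tensor-preservation locus inside the Siegel/PEL Igusa stack and then claim the Cartesian square of Conjecture \ref{mainconjecture}(1) follows because ``the outer rectangle is the known Siegel Cartesian square intersected with the tensor conditions,'' pulling back $\mathrm{BL}$ along $\Gr_{G,\mu^{-1}}\hookrightarrow\Gr_{\GSP,(\mu^\triangleright)^{-1}}$. But in the meta-unitary situation there is no morphism of Shimura data $(G,\mu)\to(\GSP,\mu^\triangleright)$ (or $(\tilde G,\mu^\triangleright)$): $\mu$ pushed to the auxiliary group is not minuscule, and only the \emph{Galois averages} of $\mu$ and $\mu^\triangleright$ agree. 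Hence there is no compatible map of Schubert cells, no embedding $\Sh(G,\mu)\hookrightarrow\Sh(\GSP,\mu^\triangleright)$, and no way to obtain the Hodge--Tate or crystalline period map for $(G,\mu)$ by restriction from the Hodge-type square --- the transfer only works for quasi-isogeny-insensitive objects ($\Bun_G$, $\Isoc_G$, display stacks via the $\Flex$ functor, and the Igusa stack itself). This is exactly the subtlety that makes the meta-unitary case non-abelian type, and your plan does not address how the $G$-side period maps and the square over $\Bun_{G,\mu^{-1}}$ are produced. Relatedly, property (4) does not ``follow essentially by construction'': B\"ultel's fiber product is a lifting statement over the stack of $(G,\mu)$-displays and its reduction, not a fiber product over $\Bun_{G,\mu^{-1}}$, so identifying the two squares is genuine work.

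The paper proceeds differently: it takes B\"ultel's integral model as given --- in particular the identification of its generic fiber with $\Sh_K(G,\mu)$ via elliptic points and Varshavsky is \emph{cited} (Theorem \ref{mainproperties}), not re-proved, so the ``main obstacle'' you flag is already discharged --- and defines $\Igs_{G,\mu,K^p}$ intrinsically as the $v$-stackification of the prestack of points of $\CS_{K^p}$ with formal quasi-isogenies as morphisms (Definition \ref{dfn:prestack_Igusa}), in the style of Daniels--Van Hoften--Kim--Zhang. The inputs are rigidity of quasi-isogenies (Proposition \ref{quasi-isogeny over nilpotent thickening}, via deformation theory of $(G,\mu)$-displays) and the formally \'etale crystalline period map $\CS\to\Sht_{G,\mu}$ coming from B\"ultel's universal display; the closed immersion $\Igs_{G,\mu}\to\Igs_{\tilde G,\mu^\triangleright}$ (Lemma \ref{functorial}) and the $v$-cover property (Lemma \ref{rmk:v_cover}) are then deduced from B\"ultel's diagram and \cite{daniels2024Igusa}, and are used only for representability and point-set control, not as the definition. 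The main Cartesian square (Theorem \ref{thm:cartesian_diagram}) is obtained by splicing B\"ultel's lifting square with the square relating $\bar\CS$ to the Igusa stack over $\Bun_{G,\mu^{-1}}$, and the generic-fiber and finite-level statements follow by the limit argument of Remark \ref{rmk:generic_fiber_cartesian}. If you want to salvage your approach, you would need to replace the ``intersect with tensors'' step by an actual construction of the $(G,\mu)$-shtuka and its period maps on B\"ultel's model --- at which point you have essentially reconstructed the paper's argument.
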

Using the results of \cite{kim2025uniqueness} concerning the functoriality of the Igusa stack, we are able to extend our construction to all Shimura data \((G, \mu)\) that admit an embedding into a meta-unitary Shimura datum. It is plausible that this approach could be further exploited to establish the existence of the Igusa stacks for all Shimura data of type \(E_7\).

\subsection{Some Cohomological Consequences}

Using the Igusa stack constructed above, one can define a complex of sheaves $\mathscr{F} \in D(\Bun_G, \Lambda)$ that \emph{controls} the compactly supported \'etale cohomology of the associated Shimura varieties. More precisely, for a torsion $\mathbb{Z}_\ell$-algebra $\Lambda$, we set
\[
\mathscr{F} := j_{!} R(\pi_{\overline{\mathrm{HT}},\, \overline{\mathbb{F}}_p})_{\ast} \Lambda,
\]
where $j \colon \Bun_{G,\mu^{-1}} \hookrightarrow \Bun_G$ denotes the natural open immersion.

Let $(G,\mu)$ be a meta-unitary Shimura datum, and let $E$ be its reflex field. Denote by $W_E$ the Weil group of $E$. Consider the Hecke operator
\[
T_{\mu} \colon D(\Bun_G, \mathbb{F}_p, \Lambda) \longrightarrow D(\Bun_G, \mathbb{F}_p, \Lambda)^{B W_E},
\]
and let $i_1 \colon \Bun_G^1 \hookrightarrow \Bun_G$ be the natural inclusion. In this setting we prove:
\begin{thm}\label{controlingsheaf}
Let $(G,\mu)$ be a meta-unitary Shimura datum, and let $\Lambda$ be a torsion $\mathbb{Z}_{\ell}$-algebra containing a square root of $p$. Then there exists a canonical $G(\mathbb{Q}_p) \times W_E$-equivariant isomorphism
\[
i_{1, \mathbb{F}_p}^{\ast} \, T_{\mu, \mathbb{F}_p, !} \mathscr{F}
\;\simeq\;
R\Gamma_{\mathrm{\acute{e}t}}\bigl(\mathrm{Sh}_{K^p}(G,\mu)_E, \Lambda\bigr).
\]
\end{thm}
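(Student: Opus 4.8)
\emph{Strategy and inputs.} The plan is to deduce the isomorphism formally from the Cartesian square of Theorem~\ref{fiberproductconjecture} and the six-functor formalism on $\Bun_G$, following the cohomological results of \cite{daniels2024Igusa} and the local--global compatibility arguments of Fargues--Scholze; properness of meta-unitary Shimura varieties makes several points automatic (agreement of $R\Gamma$ with $R\Gamma_c$, properness of the period maps, absence of compactifications), and the $\ell$-cohomological smoothness of $\Igs_{K^p,G,\mu}$ from Theorem~\ref{fiberproductconjecture} guarantees that $\mathscr{F}$ lies in the subcategory of $D(\Bun_G,\Lambda)$ to which the formalism applies. Two geometric facts are used. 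First, by Theorem~\ref{fiberproductconjecture}, $\Sh_{K^p}(G,\mu)^{\diamondsuit}=\Igs_{K^p,G,\mu}\times_{\Bun_{G,\mu^{-1}}}\Gr_{G,\mu^{-1}}$ with second projection $\pi_{\mathrm{HT}}$ and bottom map $\bar\pi_{\mathrm{HT}}\colon\Igs_{K^p,G,\mu}\to\Bun_{G,\mu^{-1}}$; since $\mathrm{BL}\colon\Gr_{G,\mu^{-1}}\to\Bun_{G,\mu^{-1}}$ is a $v$-cover and each $\pi_{\mathrm{HT}}\colon\Sh_{K_pK^p}(G,\mu)^{\diamondsuit}\to[\Gr_{G,\mu^{-1}}/K_p]$ is proper (as $\Sh_{K_pK^p}$ is proper over $E$), proper base change holds in this square and $\bar\pi_{\mathrm{HT}}$ is itself proper. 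Second, over the basic point $i_1\colon\Bun_G^1=[\Spd(\overline{\mathbb{F}}_p)/\underline{G(\mathbb{Q}_p)}]\hookrightarrow\Bun_G$, the fibre of the left leg $h_1$ of the Hecke stack $\Hecke_{G,\le\mu}$, with legs $h_1,h_2\colon\Hecke_{G,\le\mu}\to\Bun_G$, is canonically $[\Gr_{G,\mu^{-1}}/\underline{G(\mathbb{Q}_p)}]$, with $h_2$ restricting to $\mathrm{BL}$ --- the standard description of modifications of the trivial $G$-bundle, in the conventions fixed above.

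\emph{The two computations.} On the Shimura side, pushing $\Lambda$ along the first projection $p_1\colon\Sh_{K^p}(G,\mu)^{\diamondsuit}\to\Gr_{G,\mu^{-1}}$ and applying proper base change in the square of Theorem~\ref{fiberproductconjecture} gives $R(p_1)_{\ast}\Lambda\simeq\mathrm{BL}^{\ast}R(\bar\pi_{\mathrm{HT}})_{\ast}\Lambda$; since $\mathrm{BL}$ lands in the open $\Bun_{G,\mu^{-1}}$ on which $j_!$ is the identity, $\mathrm{BL}^{\ast}R(\bar\pi_{\mathrm{HT}})_{\ast}\Lambda=\mathrm{BL}^{\ast}\mathscr{F}$, hence $R\Gamma_{\mathrm{\acute{e}t}}(\Sh_{K^p}(G,\mu)_E,\Lambda)\simeq R\Gamma(\Gr_{G,\mu^{-1}},\mathrm{BL}^{\ast}\mathscr{F})$, with its $G(\mathbb{Q}_p)$- and $W_E$-actions. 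On the Hecke side, $T_{\mu,!}\mathscr{F}=R(h_1)_{!}(h_2^{\ast}\mathscr{F}\otimes\mathcal{S}_\mu)$; base change for $i_1^{\ast}$ against $R(h_1)_{!}$ together with the second fact above identifies $i_1^{\ast}T_{\mu,!}\mathscr{F}$ with $R(q)_{!}(\mathrm{BL}^{\ast}\mathscr{F}\otimes\mathcal{S}_\mu)$, where $q\colon[\Gr_{G,\mu^{-1}}/\underline{G(\mathbb{Q}_p)}]\to[\Spd(\overline{\mathbb{F}}_p)/\underline{G(\mathbb{Q}_p)}]=\Bun_G^1$ and again $h_2^{\ast}\mathscr{F}=\mathrm{BL}^{\ast}\mathscr{F}$. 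As $\Gr_{G,\mu^{-1}}$ is proper and smooth of dimension $\langle2\rho,\mu\rangle$ ($\mu$ minuscule), $\mathcal{S}_\mu$ restricts there to $\Lambda[\langle2\rho,\mu\rangle](\langle\rho,\mu\rangle)$ and $R(q)_{!}=R(q)_{\ast}$, so $i_1^{\ast}T_{\mu,!}\mathscr{F}\simeq R\Gamma(\Gr_{G,\mu^{-1}},\mathrm{BL}^{\ast}\mathscr{F})[\langle2\rho,\mu\rangle](\langle\rho,\mu\rangle)$ as a smooth $G(\mathbb{Q}_p)$-complex with a Weil structure. Comparing with the Shimura-side computation --- and using properness to identify $R\Gamma_{\mathrm{\acute{e}t}}$ with $R\Gamma_c$, which matches the $j_!$ in $\mathscr{F}$ --- gives the asserted isomorphism $i_{1,\mathbb{F}_p}^{\ast}T_{\mu,\mathbb{F}_p,!}\mathscr{F}\simeq R\Gamma_{\mathrm{\acute{e}t}}(\Sh_{K^p}(G,\mu)_E,\Lambda)$, the shift $[\langle2\rho,\mu\rangle]$ and half-integral Tate twist $(\langle\rho,\mu\rangle)$ --- for which the square root of $p$ in $\Lambda$ is needed, and which equal the relative dimension of $\pi_{\mathrm{HT}}$ --- being absorbed into the normalizations of $\mathscr{F}$ and $T_\mu$ fixed above.

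\emph{Equivariance and the main obstacle.} The $G(\mathbb{Q}_p)$-equivariance is built in: $i_1^{\ast}$ takes values in $D([\ast/\underline{G(\mathbb{Q}_p)}],\Lambda)$, $G(\mathbb{Q}_p)$ acts trivially on $\{\Igs_{K^p,G,\mu}\}$ by Theorem~\ref{fiberproductconjecture}, and on both sides the $G(\mathbb{Q}_p)$-action is the one from the $K_p$-tower, matched through the fiber product. The serious point, and what I expect to be the main obstacle, is the $W_E$-equivariance: the left-hand side carries the Weil action encoded in the geometric Satake sheaf $\mathcal{S}_\mu$ (which involves the cyclotomic twist), the right-hand side the Galois action of the $E$-rational model of the Shimura variety, and one must check these agree under each identification above. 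Every step is manifestly compatible with the Weil/Frobenius structures except the comparison of the geometric-Satake Weil structure on $\mathcal{S}_\mu$ with the one induced by the $E$-rationality of $\pi_{\mathrm{HT}}$ --- exactly the point treated in \cite{daniels2024Igusa} for the Hodge-type case. I plan to import it by reducing to the ambient Siegel datum $(\GSP,\mu^{\triangleright})$, into which the meta-unitary datum embeds with $\mu$ and $\mu^{\triangleright}$ of equal Galois average and which is of PEL type, using the functoriality of the Igusa stack \cite{kim2025uniqueness} to relate $\mathscr{F}$ and $T_\mu$ to their Siegel counterparts; alternatively, one argues directly as in loc.\ cit.\ via the Frobenius-eigenvalue description of the Satake Weil structure. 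Granting this, everything else is formal manipulation in the six-functor formalism and needs no input beyond Theorem~\ref{fiberproductconjecture}.
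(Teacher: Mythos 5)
Your proposal is correct and is essentially the paper's own route: in the body (Theorem \ref{heckeoperatorshimuracohomology}) the paper proves this statement simply by observing that the arguments of \cite[Theorems 8.4.10 and 8.5.7]{daniels2024Igusa} apply verbatim once the Cartesian square of Theorem \ref{thm:cartesian_diagram} and the properties in Corollary \ref{cor:igs_properties} are in place, and your six-functor unpacking (base change against the proper map $\bar\pi_{\mathrm{HT}}$ in that square, identification of the Hecke-stack fiber over $\Bun_G^1$ with $[\Gr_{G,\mu^{-1}}/\underline{G(\mathbb{Q}_p)}]$, comparison of the two computations up to the shift and half Tate twist) is exactly that argument. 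Two minor cautions: your convention $T_{\mu,!}=R(h_1)_!\bigl(h_2^{\ast}(-)\otimes\mathcal{S}_\mu\bigr)$ is the transpose of the paper's $T_V=Rh_{2!}\bigl(h_1^{\ast}(-)\otimes\mathcal{S}_V\bigr)$, so the legs and $\mu$ versus $\mu^{-1}$ must be interchanged consistently; and for the $W_E$-equivariance you should take your second option (the direct argument of loc.\ cit., which is what the paper invokes) rather than the proposed reduction to the ambient datum $(\tilde G,\mu^{\triangleright})$, since $\mu$ is not conjugate to $\mu^{\triangleright}$ there --- only their Galois averages agree --- so the Hecke operators and Satake sheaves do not transfer naively along the embedding.
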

We then apply Theorem~\ref{controlingsheaf} to establish the compatibility between the Fargues--Scholze local Langlands correspondence and the \'etale cohomology of meta-unitary Shimura varieties. Fix a compact open subgroup $K_p \subset G(\mathbb{Q}_p)$, and set $K = K_p K^p$. Consider the \'etale cohomology complex
\[
R\Gamma_{\textup{\'et}}(\Sh_K(G,\mu)_E, \Lambda),
\]
which carries a natural right action of the local unramified Hecke algebra $\mathcal{H}_{K_p}(G(\mathbb{Q}_p), \Lambda)$. Denote by $\mathcal{Z}_{K_p}$ the center of $\mathcal{H}_{K_p}$. For a character $\chi : \mathcal{Z}_{K_p} \to L$, define the associated $W_E$-representation
\[
W^i(\chi) := H^i_{\textup{\'et}}(\Sh_K(G,\mu)_E, \Lambda) \otimes_{\mathcal{Z}_{K_p}, \chi} L.
\]

\begin{thm}\label{compatabilitywithScholzefargueintro}
Let $(G,\mu)$ be a meta-unitary Shimura datum, and assume that the order of $\pi_0(Z(G))$ is prime to $\ell$. Then every irreducible $L$-linear representation of $W_E$ appearing as a subquotient of $W^i(\chi)$ also appears as a subquotient of $(r_\mu \circ \varphi_\chi)|_{W_E}$.
\end{thm}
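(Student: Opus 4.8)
The plan is to deduce Theorem~\ref{compatabilitywithScholzefargueintro} from Theorem~\ref{controlingsheaf} together with the Fargues--Scholze spectral action on $D(\Bun_G,\Lambda)$, so that once the Igusa-stack description is in place the argument runs exactly as in the Hodge-type case treated by Daniels--Van Hoften--Kim--Zhang (itself following Kret--Shin, Koshikawa and Hamann). The meta-unitary hypothesis enters only through Theorems~\ref{fiberproductconjecture} and~\ref{controlingsheaf}; everything downstream is formal.

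First I would rewrite the left-hand side. By Theorem~\ref{controlingsheaf} the complex $R\Gamma_{\textup{\'et}}(\Sh_K(G,\mu)_E,\Lambda)$, as a module over $\mathcal{H}_{K_p}(G(\mathbb{Q}_p),\Lambda)$ carrying a compatible $W_E$-action, is the $K_p$-level part of $i_{1,\mathbb{F}_p}^{\ast}T_{\mu,\mathbb{F}_p,!}\mathscr{F}$. The spherical Hecke action at $p$ is realized on the right by the $\Bun_G$-Hecke action along the trivial reduction, and --- this is the crucial input from the Fargues--Scholze construction --- the center $\mathcal{Z}_{K_p}$ acts on $\mathscr{F}$ through the spectral Bernstein center of $G$ over $\mathbb{Q}_p$. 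Consequently, after base change to $\mathbb{F}_p$ and localization at the character $\chi$, the localized sheaf $\mathscr{F}_\chi$ has "excursion support'' the semisimple unramified $L$-parameter $\varphi_\chi$ attached to $\chi$ by the Satake isomorphism: every excursion operator acts on $\mathscr{F}_\chi$ by the scalar it takes at $\varphi_\chi$. The hypothesis that $|\pi_0(Z(G))|$ is prime to $\ell$ is exactly what makes the Fargues--Scholze formalism and the spectral action available with the torsion coefficient ring $\Lambda$.

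The geometric heart is then the identification, up to semisimplification, of the $W_E$-representation $i_1^{\ast}T_\mu(-)$ on the $\chi$-localized category with $(r_\mu\circ\varphi_\chi)|_{W_E}$. Here $T_\mu$ is convolution with the Satake sheaf attached, via geometric Satake over the reflex field $E$, to the representation $r_\mu$ of ${}^{L}G_E$ associated with $\mu$; because $\Lambda$ contains a square root of $p$ the necessary half-Tate twist exists, so $T_\mu$ carries its natural $W_E$-equivariant structure and lands in $D(\Bun_G,\mathbb{F}_p,\Lambda)^{BW_E}$. Restricting along $i_1$ turns this convolution into the cohomology of a $\mu$-Schubert cell in the $B_{\mathrm{dR}}^{+}$-affine Grassmannian on the fibre over the trivial bundle, and the $W_E$-module structure of the result is controlled by $\varphi_\chi$ by running the Fargues--Scholze excursion operators: for $\gamma\in W_E$ the excursion datum built from $r_\mu\boxtimes r_\mu^{\vee}$, the pair $(\gamma,1)$, and dual bases of the evaluation and coevaluation maps shows that the $W_E$-action $T_\mu$ superimposes on $\mathscr{F}_\chi$ has, on every object of the localized category, the same trace as $\operatorname{tr}(\varphi_\chi(-)\mid r_\mu)$. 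Decomposing $\mathscr{F}_\chi$ further according to the stack of $L$-parameters around $\varphi_\chi$, each piece is an iterated extension of Hecke eigensheaves whose parameters all restrict to $\varphi_\chi|_{W_E}$, so the "multiplicity space'' of $i_1^{\ast}T_\mu\mathscr{F}_\chi$ carries the trivial $W_E$-action; invoking Brauer--Nesbitt and the semisimplicity of the unramified parameter $\varphi_\chi$ turns the trace computation into the assertion that every irreducible $W_E$-subquotient of $i_1^{\ast}T_\mu\mathscr{F}_\chi$, hence of each $W^i(\chi)$, is a constituent of $(r_\mu\circ\varphi_\chi)|_{W_E}$. Passing to $H^i$ and to $K_p$-invariants while keeping track of the $G(\mathbb{Q}_p)\times W_E$-equivariance of Theorem~\ref{controlingsheaf} then gives the theorem.

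The main obstacle is the compatibility asserted in the second paragraph: one must show that the classical spherical Hecke action at $p$ on Shimura cohomology, the $\Bun_G$-Hecke action, and the Fargues--Scholze excursion algebra are all intertwined through the Cartesian square of Theorem~\ref{fiberproductconjecture} and its integral refinement, i.e.\ that $\pi_{\mathrm{Crys}}$ (resp.\ $\pi_{\mathrm{HT}}$) matches the classical Hecke correspondences at $p$ with the Beauville--Laszlo and geometric-Satake machinery on $\Bun_G$. This "$S=T$'' input has to be imported from Fargues--Scholze and transported across the Igusa-stack diagram; once it is in place, the remaining steps --- geometric Satake, the excursion formalism, and Brauer--Nesbitt --- are routine.
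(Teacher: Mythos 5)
Your proposal follows essentially the same route as the paper, which simply imports Section 9 of Daniels--Van Hoften--Kim--Zhang: identify $R\Gamma_{\textup{\'et}}(\Sh_K(G,\mu)_E,\Lambda)$ via the controlling sheaf and the Hecke operator $T_\mu$ (Theorems \ref{controlingsheaf}/\ref{heckeoperatorshimuracohomology}), use that the spectral and Weil actions agree and that $\mathcal{Z}^{\mathrm{spec}}(G,\Lambda)$ acts through $\Psi_G$, then localize at $\chi$ and read off the $W_E$-constituents from the parameter $\varphi_\chi$. Your excursion-operator/Brauer--Nesbitt gloss on the last step is a cosmetic variant of the spectral-action d\'evissage used there, so the argument is correct and matches the paper's.
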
 
We will also prove the Mantovan formula and its geometric incarnation in our setting:

\begin{cor}\label{mantovanformulegeometricintro}
The integral Shimura variety admits a Newton stratification by $\CS_K^b$, and we have
\[ \CS_K^b = [\Ig_b/\tilde{G}_b] \times_{[*/\tilde{G}_b]} [\CM^{\textup{int}}_{G,\mu,b}/\tilde{G}_b]. \]
\end{cor}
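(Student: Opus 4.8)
The plan is to obtain the formula by base-changing the integral Cartesian square of Conjecture~\ref{mainconjecture}(4) --- which holds for meta-unitary data by Theorem~\ref{fiberproductconjecture} --- along a single Harder--Narasimhan stratum of $\Bun_G$. Recall that $\Bun_G=\bigsqcup_{b\in B(G)}\Bun_G^b$ is the Newton stratification, that the substack $\Bun_{G,\mu^{-1}}$ appearing in Conjecture~\ref{mainconjecture} is $\bigsqcup_{b\in B(G,\mu^{-1})}\Bun_G^b$, and that each stratum is a classifying stack $\Bun_G^b\cong[\ast/\tilde G_b]$ with $\tilde G_b=\underline{\mathrm{Aut}}(\CE_b)$ the automorphism $v$-sheaf of the associated $G$-bundle, an extension of $\underline{G_b(\mathbb{Q}_p)}$ by a positive-slope pro-unipotent $v$-sheaf $\tilde G_b^{>0}$. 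Pulling this stratification back along the composite $\CS_K^{\diamondsuit}\xrightarrow{\bar\pi_{\mathrm{red}}}\Igs_{K^p,G,\mu}\xrightarrow{\bar\pi_{\mathrm{Crys}}}\Bun_G$ defines the locally closed substacks $\CS_K^b$. First I would check that, on the special fibre, these coincide with the classical Newton stratification of B\"ultel's integral model by isocrystals with $G$-structure: one identifies the $G$-isocrystal read off from the $(G,\mu)$-display at a mod-$p$ point of $\CS_K$ with the bundle $\CE_b$ classified by $\bar\pi_{\mathrm{Crys}}$, via Fargues's equivalence between $G$-isocrystals and $G$-bundles on the Fargues--Fontaine curve, using that $\pi_{\mathrm{Crys}}$ is by construction the map induced by the display.

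Next I would compute the base changes of the three corners of the square along $[\ast/\tilde G_b]\hookrightarrow\Bun_{G,\mu^{-1}}$. By construction of $\CS_K^b$ we have $\CS_K^{\diamondsuit}\times_{\Bun_{G,\mu^{-1}}}[\ast/\tilde G_b]=\CS_K^b$. By the definition of the integral moduli of shtukas --- equivalently, of the canonical integral model $\CM^{\textup{int}}_{G,\mu,b}$ of the local Shimura variety at level $K_p$ --- the fibre of $\Sht_{G,\mu,K_p}\to\Bun_G$ over $[b]$ is $\CM^{\textup{int}}_{G,\mu,b}$ with its natural $\tilde G_b$-action, so $\Sht_{G,\mu,K_p}\times_{\Bun_G}[\ast/\tilde G_b]\cong[\CM^{\textup{int}}_{G,\mu,b}/\tilde G_b]$; note this agrees with the fibre product over $\Bun_{G,\mu^{-1}}$ since $\Bun_G^b\hookrightarrow\Bun_G$ factors through $\Bun_{G,\mu^{-1}}$. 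Finally, by the construction of the Igusa stack in Theorem~\ref{fiberproductconjecture} --- which matches $\Igs_{K^p,G,\mu}$ with the infinite-level Igusa varieties of B\"ultel and of Caraiani--Scholze/Hamacher--Kim along the lines of \cite{zhang2023pel,daniels2024Igusa} --- the fibre of $\Igs_{K^p,G,\mu}\to\Bun_G$ over $[b]$ is the infinite-level Igusa variety $\Ig_b$ with its $\tilde G_b$-action, whence $\Igs_{K^p,G,\mu}\times_{\Bun_G}[\ast/\tilde G_b]\cong[\Ig_b/\tilde G_b]$. Base-changing the Cartesian square of Conjecture~\ref{mainconjecture}(4) along $[\ast/\tilde G_b]\to\Bun_{G,\mu^{-1}}$ and using that a base change of a Cartesian square is Cartesian then gives precisely
\[
\CS_K^b=[\Ig_b/\tilde G_b]\times_{[\ast/\tilde G_b]}[\CM^{\textup{int}}_{G,\mu,b}/\tilde G_b],
\]
as claimed; the numerical Mantovan formula for $R\Gamma_c$ follows by applying compactly supported cohomology and the K\"unneth/projection formalism to the fibration $[\Ig_b/\tilde G_b]\times_{[\ast/\tilde G_b]}(-)\to[\ast/\tilde G_b]$.

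The step I expect to be the main obstacle is the identification of the fibre of $\Igs_{K^p,G,\mu}$ over $[b]$ with the classical Igusa variety $\Ig_b$, together with the reconciliation of the two incarnations of $\tilde G_b$: the automorphism $v$-sheaf $\underline{\mathrm{Aut}}(\CE_b)$ on the one hand, and the group through which quasi-isogenies (equivalently, the $(G,\mu)$-display automorphisms) act on B\"ultel's Igusa tower on the other. Concretely, one must verify that $\Ig_b\to[\Ig_b/\tilde G_b]$ is the expected $\tilde G_b$-torsor and that the pro-unipotent part $\tilde G_b^{>0}$ acts on $\Ig_b$ so that $\Ig_b/\tilde G_b^{>0}$ recovers the finite-level Igusa variety carrying the $J_b(\mathbb{Q}_p)$-action --- this is the input from \cite{zhang2023pel,daniels2024Igusa} (and \cite{kim2025uniqueness} for the relevant functoriality) that makes the abstract fibre product formula coincide with Mantovan's classical statement. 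Granting this, the remainder is a formal manipulation of Cartesian squares.
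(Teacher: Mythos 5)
Your proposal follows essentially the same route as the paper: restrict the integral Cartesian square of Theorem~\ref{thm:cartesian_diagram} (Conjecture~\ref{mainconjecture}(4)) to a Newton stratum $\Bun_G^b\simeq[\ast/\tilde G_b]$, and identify the resulting fibres via the torsor statements $\Igs^b=[\Ig_b/\tilde G_b]$ and $\Sht^b=[\CM^{\textup{int}}_{G,\mu,b}/\tilde G_b]$, which is exactly the content of Lemma~\ref{lem:torsors} in the paper. The point you flag as the main obstacle (matching the fibre of the Igusa stack with the infinite-level Igusa variety and reconciling the two incarnations of $\tilde G_b$) is precisely what that lemma supplies, so your argument is correct and matches the paper's.
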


\begin{thm}\label{mantovan formulecohomologicintro}
There exists a filtration on the complex of smooth representations $\textup{R}\Gamma(\Sh_{K_p, E}, \Lambda)$, indexed by the set of Newton strata $[b] \in B(G)$, whose graded pieces are given by
\[ \textup{gr}_{[b]} \textup{R}\Gamma(\Sh_{K_p, E}, \Lambda) \simeq \textup{R}\Gamma(\CI^{b,v}_{K_p}, \Lambda)^{\textup{op}} \otimes_{\CH(G_b)}^L \textup{R}\Gamma_c(\CM_{G,b,\mu,\infty}, \delta_b)[2d_b]. \]
\end{thm}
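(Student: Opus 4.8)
The plan is to deduce the cohomological filtration from the geometric Mantovan formula of Corollary~\ref{mantovanformulegeometricintro} together with Theorem~\ref{controlingsheaf}, which identifies $R\Gamma_{\textup{\'et}}(\Sh_{K^p}(G,\mu)_E,\Lambda)$ with a pushforward of the controlling sheaf $\mathscr{F}$ on $\Bun_G$. First I would recall that the Newton stratification $\CS_K = \bigsqcup_{[b]\in B(G,\mu)}\CS_K^b$ is a stratification of the (diamond associated to the) integral Shimura variety by locally closed substacks, with $[b]$ ranging over a finite partially ordered set; closed strata correspond to larger $[b]$ in the usual ordering. Pulling back along $\bar\pi_{\mathrm{HT}}\colon\Igs_{K^p,G,\mu}\to\Bun_{G,\mu^{-1}}$, or rather along the Hodge--Tate period map on the Shimura side, this stratification is the preimage of the stratification of $\Bun_G$ by its Harder--Narasimhan strata $\Bun_G^b$. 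Hence, applying the recollement (open--closed) triangles for the stratified $v$-stack $\Bun_G$ to the complex $\mathscr{F}$ and then applying the cohomological correspondence $i_{1,\mathbb{F}_p}^\ast T_{\mu,\mathbb{F}_p,!}$, one obtains a finite filtration on $R\Gamma_{\textup{\'et}}(\Sh_{K^p}(G,\mu)_E,\Lambda)$ indexed by $[b]\in B(G,\mu)$, whose graded pieces are the contributions of the individual strata $\mathscr{F}|_{\Bun_G^b}$.

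The second step is to compute the $[b]$-graded piece explicitly. By Corollary~\ref{mantovanformulegeometricintro}, the stratum $\CS_K^b$ decomposes as the twisted product $[\Ig_b/\tilde G_b]\times_{[\ast/\tilde G_b]}[\CM^{\textup{int}}_{G,\mu,b}/\tilde G_b]$, i.e.\ as a fibration of the integral local Shimura variety $\CM^{\textup{int}}_{G,\mu,b}$ over the Igusa variety $\Ig_b$, with structure group $\tilde G_b = G_b(\mathbb{Q}_p)$ (the $\sigma$-centralizer of $b$). Taking compactly supported cohomology of the generic fiber and using the projection/K\"unneth formula for this twisted product, the $[b]$-stratum contributes
\[
R\Gamma_c\bigl(\CI^{b,v}_{K_p},\Lambda\bigr)\otimes^{L}_{\CH(G_b)}R\Gamma_c\bigl(\CM_{G,b,\mu,\infty},\delta_b\bigr)[2d_b],
\]
where $\CI^{b,v}_{K_p}$ is the (perfection/diamond of the) Igusa variety at level $K_p$, $\CM_{G,b,\mu,\infty}$ is the infinite-level local Shimura variety, $\delta_b$ is the relevant modulus character (the twist $[2d_b]$ coming from the dimension $d_b$ of the affine Deligne--Lusztig datum, equivalently the dimension of the local Shimura variety, accounting for the shift/twist in the dualizing complex — here the hypothesis that the dualizing complex of $\Igs_{K^p,G,\mu}$ is $\mathbb{F}_p[0]$ makes the Igusa side contribute with no shift). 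The transition from $R\Gamma$ to $R\Gamma_c$ with the $\textup{op}$-superscript is Poincar\'e duality for the $\ell$-cohomologically smooth, $0$-dimensional Igusa stack, again using the dualizing-complex computation from Theorem~\ref{fiberproductconjecture}; this is what lets one rewrite the naive pushforward as the stated derived tensor product over the Hecke algebra $\CH(G_b)$ of the $\sigma$-centralizer.

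The main obstacle I expect is \textbf{matching the two filtrations}: the Newton/HN stratification filtration obtained abstractly from the recollement on $\Bun_G$ must be shown to coincide — as a filtered object in the derived category of smooth $\Lambda$-representations of $G(\mathbb{Q}_p)\times W_E$ — with the geometrically defined stratification of $\CS_K$, and one must check that the identification is compatible with all the relevant group actions and with the Galois action of $W_E$. Concretely this requires: (i) that $\pi_{\mathrm{HT}}$ carries Newton strata to HN strata (a statement about the image of $\pi_{\mathrm{HT}}$ in $\Bun_{G,\mu^{-1}}$ and the known compatibility of Newton points, which follows from the Cartesian square in Conjecture~\ref{mainconjecture}(1) applied stratum-by-stratum); (ii) base change and the proper/partially-proper properties needed to commute $T_{\mu,!}$ with the excision triangles; and (iii) tracking the shifts and modulus characters $\delta_b$ through the twisted product and through Poincar\'e duality, which is the most calculation-heavy point but is parallel to the Hodge-type case treated in \cite{daniels2024Igusa} and the abelian-type arguments of \cite{yang2025generic}. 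Once the stratum-wise identification is in place, assembling the graded pieces into the asserted filtration is formal.
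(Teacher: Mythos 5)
Your overall skeleton does match the route the paper takes (which is to check the inputs -- the Cartesian square of Theorem \ref{thm:cartesian_diagram}, the properties in Corollary \ref{cor:igs_properties}, the torsor description of Lemma \ref{lem:torsors}, and the dualizing complex $\delta_b^{-1}[-2d_b]$ of $\Bun_G^b$ -- and then quote the argument of \cite[Thm.\ 8.4.10, 8.5.7]{daniels2024Igusa} verbatim, since that argument never uses the Hodge-type hypothesis). But your execution of the key step, the computation of the graded pieces, has a genuine gap. In the recollement-on-$\Bun_G$ approach the $[b]$-graded piece is $i_{1}^{*}T_{\mu,!}\bigl(i_{b!}\,i_b^{*}\mathscr{F}\bigr)$, and its evaluation splits into two separate statements: (i) $i_b^{*}\mathscr{F}\simeq R\Gamma(\Ig_b,\Lambda)$ as an object of $D(\Bun_G^b,\Lambda)\simeq D(G_b(\BQ_p),\Lambda)$, which follows from base change along the Cartesian square together with $\Igs^b=[\Ig_b/\tilde G_b]$; and (ii) the purely local identification of the functor $i_1^{*}T_\mu\, i_{b!}$ with $(-)\otimes^{L}_{\CH(G_b)}R\Gamma_c(\CM_{G,b,\mu,\infty},\delta_b)[2d_b]$, which comes from the geometry of the Hecke correspondence between the strata $\Bun_G^b$ and $\Bun_G^1$ (whose fiber is the infinite-level local Shimura variety modulo $\tilde G_b$) and the dualizing complex of $\Bun_G^b$. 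Your proposal instead extracts the local factor by ``taking compactly supported cohomology of the generic fiber'' of the special-fiber twisted product $\CS_K^b=[\Ig_b/\tilde G_b]\times_{[*/\tilde G_b]}[\CM^{\textup{int}}_{G,\mu,b}/\tilde G_b]$ and invoking K\"unneth. That step is not justified: the twisted product lives over $\bar{\BF}_p$ and involves the \emph{integral} local Shimura variety, and passing from it to $R\Gamma_c$ of the infinite-level generic-fiber space $\CM_{G,b,\mu,\infty}$ with its $G(\BQ_p)$-action and the twist $\delta_b[2d_b]$ is precisely the nontrivial content (in Mantovan's original setting this is a delicate nearby-cycles d\'evissage) that the $\Bun_G$-approach is designed to bypass by computing the Hecke operator between strata instead.

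Two further corrections. The superscript $\textup{op}$ in the statement is not Poincar\'e duality: it records that $R\Gamma(\CI^{b,v}_{K_p},\Lambda)$, a smooth $G_b(\BQ_p)$-representation, is being regarded as a module over the opposite Hecke algebra so that the derived tensor product over $\CH(G_b)$ makes sense; your replacement of $R\Gamma$ by $R\Gamma_c$ via duality on a ``$0$-dimensional smooth'' object conflates the Igusa \emph{stack} $\Igs^b$ (which is $\ell$-cohomologically smooth of dimension $0$) with the Igusa \emph{variety} $\Ig_b$ (a non-proper perfect affine scheme), and no such duality step occurs in the proof. Also, $\tilde G_b$ is an extension of $G_b$ by a unipotent group diamond, not $G_b(\BQ_p)$; only the identification of sheaf categories $D(\Bun_G^b,\Lambda)\simeq D(G_b(\BQ_p),\Lambda)$ is used. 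With step (ii) above supplied and the $\textup{op}$ interpreted correctly, the assembly of the filtration from the excision triangles is formal, as you say.
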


Next we observe that the proof of \cite{yang2025generic} works in the setting of meta-unitary Shimura varieties. Let $\ell \neq p$ be a prime, and let $K_p \subset G(\mathbb{Q}_p)$ be a hyperspecial subgroup. Denote by $\mathcal{H}_{K_p}$ the $\mathbb{F}_\ell$-valued spherical Hecke algebra for $G(\mathbb{Q}_p)$ with respect to $K_p$. Let $\overline{E}$ be an algebraic closure of the reflex field $E$. Then $\mathcal{H}_{K_p}$ acts on the \'etale cohomology groups
\[
H^i_{\textup{\'et}}(\Sh_K(G,\mu)_{\overline{E}}, \mathbb{F}_\ell).
\]
for all compact open subgroups $K^p \subset G(\mathbb{A}_f^p)$. For a maximal ideal $\mathfrak{m} \subset \mathcal{H}_{K_p}$, let $\varphi_{\mathfrak{m}}$ denote the associated semisimple $L$-parameter.

\begin{thm}\label{torisonvanishing}
Let $(G,\mu)$ be a meta-unitary Shimura datum, and assume that $p$ is a big enough prime number. Suppose further that $\ell$ is coprime to the pro-order of $K_p$ and $\Lambda$ is either $\BF_l$ or $\BQ_l$. If the parameter $\xi_{\mathfrak{m}}$ is generic, then
\[
H^i_{\textup{\'et}}(\Sh_K(G,\mu)_{\overline{E}}, \lambda)_{\mathfrak{m}} = 0
\quad \text{for all } i \neq d,
\]
where $d = \dim \Sh_K(G,\mu)$.
\end{thm}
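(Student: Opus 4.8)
This is a generic vanishing statement, so the natural strategy is to follow the Koshikawa–Hansen–Kaletha and then Zhu–Yang approach: reduce the cohomology of the Shimura variety to a computation on $\Bun_G$ via the controlling sheaf $\mathscr{F}$ of Theorem \ref{controlingsheaf}, and then use genericity of the parameter to force all contributions except the one from the basic (or more precisely the $\mu$-ordinary) stratum — actually the one concentrated in a single perverse degree — to vanish.

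Let me think about what the real structure here is. We have $i_{1,\mathbb{F}_p}^* T_{\mu,\mathbb{F}_p,!}\mathscr{F} \simeq R\Gamma_{\textup{ét}}(\mathrm{Sh}_{K^p}(G,\mu)_E,\Lambda)$. The Hecke operator $T_\mu$ is built from the geometric Satake / spectral action. The localization at $\mathfrak{m}$ picks out, on the spectral side, the connected component of $\mathrm{Bun}_G$-sheaf theory supported near the semisimple parameter $\varphi_{\mathfrak{m}}$. The genericity of $\xi_{\mathfrak{m}}$ (= $\varphi_{\mathfrak{m}}$, the associated parameter) should mean that $\varphi_{\mathfrak{m}}$ does not factor through any proper parabolic, equivalently the centralizer $S_{\varphi_{\mathfrak{m}}}$ is as small as possible, equivalently $\varphi_{\mathfrak{m}}$ is not "sub-generic" in the sense that would make it land in a nontrivial Eisenstein/parabolic contribution.

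Here is the plan, in order.

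\begin{enumerate}
\item \textbf{Reduce to a statement on $\Bun_G$.} Using Theorem \ref{controlingsheaf}, replace $R\Gamma_{\textup{ét}}(\mathrm{Sh}_K(G,\mu)_{\overline E},\mathbb{F}_\ell)_{\mathfrak{m}}$ by $\bigl(i_{1,\mathbb{F}_p}^* T_{\mu,\mathbb{F}_p,!}\mathscr{F}\bigr)_{\mathfrak{m}}$, together with its Hecke action. The point is that localizing at $\mathfrak{m}$ on the cohomology corresponds, under Fargues–Scholze, to localizing the sheaf $\mathscr{F}$ (or rather $T_\mu \mathscr{F}$) at the corresponding point of the spectral Bernstein center; this is the standard compatibility of the excursion-algebra action with the Hecke action, and for $\Lambda = \mathbb{F}_\ell$ with $\ell$ coprime to the pro-order of $K_p$ the spherical Hecke algebra embeds in the spectral Bernstein center.

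\item \textbf{Stratify and identify the perverse amplitudes.} The sheaf $\mathscr{F} = j_! R(\pi_{\overline{\mathrm{HT}}})_* \Lambda$ lives on $\Bun_{G,\mu^{-1}} \hookrightarrow \Bun_G$, and $R(\pi_{\overline{\mathrm{HT}}})_*\Lambda$ is, by the Cartesian diagram of Conjecture \ref{mainconjecture}(1) and the $\ell$-cohomological smoothness of $\Igs_{K^p,G,\mu}$ of dimension $0$ (Theorem \ref{fiberproductconjecture}), a shifted perverse sheaf — more precisely $\pi_{\overline{\mathrm{HT}}}$ is "nice" (cohomologically smooth along strata) so $R(\pi_{\overline{\mathrm{HT}}})_* \Lambda$ is perverse up to a universal shift. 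The semi-infinite / Harder–Narasimhan stratification of $\Bun_G$ by $\Bun_G^b$, $b \in B(G)$, then lets us filter $\mathscr{F}$, and each graded piece is (the $!$-pushforward from) the Igusa-variety cohomology $R\Gamma(\Ig_b,\Lambda)$ tensored with a local-Shimura-variety factor, exactly as in Corollary \ref{mantovanformulegeometricintro} and Theorem \ref{mantovan formulecohomologicintro}.

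\item \textbf{Kill the non-generic strata.} Apply the Hecke operator $T_\mu$ and localize at $\mathfrak{m}$. The key input is the charge-conservation / genericity mechanism of \cite{yang2025generic} (ultimately resting on \cite{hemo2021unipotent} for the unipotent categorical local Langlands and on the Fargues–Scholze geometrization): after localizing at a generic $\varphi_{\mathfrak{m}}$, the stalk of $T_\mu \mathscr{F}$ at every non-basic $b$ vanishes, because $b \ne 1$ forces the relevant local term to be an Eisenstein-type contribution and a generic parameter has no Eisenstein (parabolically induced) constituents localized at $\mathfrak{m}$. Concretely: $(i_b^* T_\mu \mathscr{F})_{\mathfrak{m}}$ is computed by the cohomology of $\CM_{G,\mu,b,\infty}$ against $R\Gamma(\Ig_b,\Lambda)_{\mathfrak{m}}$, and $R\Gamma(\Ig_b,\Lambda)_{\mathfrak{m}}$ — or its image under the relevant Hecke/parameter functor — vanishes for $b\neq 1$ because $\varphi_{\mathfrak m}$ does not factor through $\phantom{}^L(M_b)$ for the relevant Levi $M_b$. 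Hence only $b = 1$, i.e. $i_{1,\mathbb{F}_p}^*$, survives, and there $T_\mu \mathscr{F}$ is already concentrated in a single perverse degree.

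\item \textbf{Degree bookkeeping.} Finally, on the $b = 1$ stratum the sheaf is perverse (single degree after the shift), and $\Bun_G^1 \cong [\ast/\underline{G(\mathbb{Q}_p)}]$ is cohomologically smooth of dimension $0$, so taking $i_{1}^* T_\mu(-)_!$ and global sections places the cohomology in the single degree $i = d$, where $d = \dim \mathrm{Sh}_K(G,\mu) = \langle 2\rho, \mu\rangle$. This is where the hypothesis that $\Lambda$ is $\mathbb{F}_\ell$ or $\mathbb{Q}_\ell$ and the coprimality of $\ell$ to the pro-order of $K_p$ are used, to guarantee that $[\Gr_{G,\mu^{-1}}/K_p]$ contributes no extra shifts and that perversity is preserved by $T_\mu$ after localization.
\end{enumerate}

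\textbf{The main obstacle.} The genuinely hard step is step 3, the vanishing of $(i_b^* T_\mu\mathscr{F})_{\mathfrak{m}}$ for non-basic $b$. In the abelian-type case of \cite{yang2025generic} one leans on the fact that the Igusa varieties $\Ig_b$ have an explicit group-theoretic description and their cohomology can be analyzed via the trace formula / Mantovan-type product structure, together with the unipotent categorical local Langlands of \cite{hemo2021unipotent} to control which parameters occur. For meta-unitary Shimura varieties of type $E_7$ (non-abelian type) the Igusa stacks $\Igs_{K^p,G,\mu}$ were constructed in Theorem \ref{fiberproductconjecture} as substacks cut out by tensors inside the ambient $\GSP$ Igusa stack; one must check that this tensor description is compatible with the Hecke/parameter formalism finely enough that the genericity argument still applies — i.e. that the "charge conservation" estimate survives the restriction to the meta-unitary substack and that the local $P=P_b$ appearing is still the expected one. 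A secondary subtlety is that we work with $\mathbb{F}_\ell$-coefficients: one must ensure the Fargues–Scholze spectral action, the geometric Satake equivalence mod $\ell$, and the perversity statements all hold integrally / mod $\ell$ under the coprimality hypothesis on $\ell$; this is why $p$ is taken "big enough" (so that $G$ is unramified, $K_p$ hyperspecial, and $\ell$ can be chosen banal). I expect the rest of the argument to be a relatively formal transcription of \cite{yang2025generic} once these two points are secured.
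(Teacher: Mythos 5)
Your steps 1--2 are consistent with the paper's starting point (Theorem \ref{controlingsheaf} and the Mantovan-type filtration), but the heart of your argument, step 3, does not work as stated, and it is also not the route the paper takes. The mechanism you propose --- that a generic $\varphi_{\mathfrak m}$ ``does not factor through ${}^L M_b$,'' hence $R\Gamma(\Ig_b,\Lambda)_{\mathfrak m}$ and $(i_b^* T_\mu\mathscr{F})_{\mathfrak m}$ vanish for all $b\neq 1$ --- is incompatible with the notion of genericity used here (Definition \ref{dfn:generic_parameter}): $\varphi_{\mathfrak m}$ is an unramified \emph{toral} parameter, so it factors through ${}^L T$ and therefore through ${}^L M$ for every Levi containing $T$; genericity is the cohomological condition $R\Gamma(W_{\BQ_p},\alpha\circ\varphi_T)=0$ for all coroots $\alpha$, not a non-degeneracy with respect to parabolic induction. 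Even the expected vanishing pattern is off: in the Caraiani--Scholze-type stratum-killing arguments it is the $\mu$-ordinary stratum, not the trivial one, whose Igusa cohomology survives generic localization, and the vanishing on the remaining strata is proved there via the trace formula and an automorphic description of $R\Gamma(\Ig_b,\Lambda)$ --- input that does not exist for the non-abelian-type (e.g.\ $E_7$) meta-unitary varieties, whose Igusa stacks are not moduli of abelian varieties. Note also that the restriction $i_1^*$ is already built into Theorem \ref{controlingsheaf}; the problem is not to kill the other strata of $T_\mu\mathscr{F}$ but to control the amplitude of $i_1^*T_\mu\mathscr{F}$ after localization, to which every stratum of $\mathscr{F}$ on $\Bun_{G,\mu^{-1}}$ a priori contributes through the Hecke correspondence.

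The paper avoids all of this by transporting the problem to the Witt-vector side and following Zhu--Yang \cite{yang2025generic} (Section 9): genericity enters purely locally, through the exotic $t$-exactness of the Hecke operators $T_V$ for tilting $V$ on $\Shv^{\unip}(\Isoc_G,\Lambda)_\xi$ and the unipotent categorical equivalence (Theorem \ref{thm:main_unip}), while the global input is the $2$-Cartesian diagram of Remark \ref{rmk:2_cartesian_unip}, the !-Igusa sheaf $\mathcal{J}^{\can}$ with costalks $R\Gamma(\Ig_b,\Lambda)$ (Proposition \ref{pro:cohomology_Igusa}), the coherent Springer sheaf, and the local-global compatibility at Iwahori level (Theorem \ref{thm:local_global}); the concentration in degree $d$ then follows from amplitude bookkeeping in the exotic $t$-structure, with no stratum-by-stratum vanishing claim, and properness of the meta-unitary varieties only removes the well-positionedness issues. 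To salvage your outline you would have to replace step 3 by this $t$-exactness-at-generic-parameters input (or an equivalent perversity statement on $\Bun_G$), which is exactly the hard local theorem the paper cites rather than reproves.
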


\subsection*{Structure of the paper}
This paper is organized as follows. In Section 2, we review the necessary background from $p$-adic geometry, including the Fargues--Fontaine curve, the moduli spaces of Shtukas, and the stack of local Shimura varieties. In Section 3, we recall the definition of meta-unitary Shimura varieties and their integral models as constructed by B\"ultel.

In Section 4, we introduce the local Igusa stack associated to a Newton stratum and study its representability. Section 5 is devoted to the construction of the global Igusa stack and the proof of the fiber product conjecture for meta-unitary Shimura varieties.

In Section 6, we derive several cohomological consequences of the fiber product formula, including the geometric Mantovan formula and the computation of the cohomology of Shimura varieties in terms of the cohomology of the Igusa stack. In Section 7, we establish the compatibility between the cohomology of meta-unitary Shimura varieties and the Fargues--Scholze local Langlands correspondence.

In Section 8, we discuss the functoriality of the Igusa stack. Finally, in Section 9, we apply the unipotent categorical local Langlands correspondence to prove the vanishing of the generic part of the cohomology.

\subsection*{Acknowledgments}
I am grateful to Mohammad Hadi Hedayatzadeh, Miaofen Chen, Shayan Gholami, Kiran Kedlaya, and Adib Abdollahi for their helpful discussions and encouragement.

\subsection*{Notation}
We follow the standard notation in $p$-adic geometry and the theory of Shimura varieties.

\begin{itemize}
    \item $p$: A fixed prime number.
    \item $E$: The reflex field of the Shimura datum $(G, \mu)$.
    \item $\mathcal{O}_E$: The ring of integers of $E$.
    \item $\breve{E}$: The completion of the maximal unramified extension of $E$.
    \item $k$ or $\mathbb{F}_q$: The residue field of $E$.
    \item $\sigma$: The Frobenius automorphism on $\breve{E}$ or $W(k)$.
    \item $G$: A reductive group over $\mathbb{Q}_p$.
    \item $\mu$: A cocharacter of $G$.
    \item $B(G)$: The set of $\sigma$-conjugacy classes of $G(\breve{\mathbb{Q}}_p)$.
    \item $\Sh_K(G, \mu)$: The Shimura variety associated to $(G, \mu)$ at level $K$.
    \item $\mathcal{S}_K$: The integral model of the Shimura variety.
    \item $\Bun_G$: The stack of $G$-bundles on the Fargues--Fontaine curve.
    \item $\Gr_G$: The affine Grassmannian of $G$.
    \item $\Sht_{G, \mu}$: The moduli stack of $(G, \mu)$-Shtukas.
    \item $\Igs_{K^p, G, \mu}$: The Igusa stack.
    \item $\Isoc_G$: The stack of $G$-isocrystals.
    \item $\Hck_G$: The Hecke stack.
    \item $X_S$: The Fargues--Fontaine curve associated with a perfectoid space $S$.
    \item $\mathcal{Y}_S$: The relative Fargues--Fontaine curve (adic space).
    \item $\Lambda$ is either $\bar{F_\ell}$ or $\bar{\BQ}_\ell$ for a big enough prime $\ell$.
    \item $D(\Bun_G, \Lambda)$: The derived category of $\Lambda$-sheaves on $\Bun_G$.
    \item ${}^L G$: The Langlands dual group of $G$.
    \item $W_E$: The Weil group of $E$.
\end{itemize}

\begin{comment}
Assumption~10.1.4 requires that the Hecke operator $T_\mu$ acts $t$-exactly on the $\varphi_{\mathfrak{m}}$-localized derived category
\[
D(\Bun_G, \mathbb{F}_\ell)^{\mathrm{ULA}}_{\varphi_{\mathfrak{m}}},
\]
a condition conjectured to hold whenever $\varphi_{\mathfrak{m}}$ is of weakly Langlands--Shahidi type in the sense of Hansen--Li~\cite{hansenli2023}.
\end{comment}
\section{Elements from $p$-adic geometry}
In this section we want to review some concepts from $p$-adic geometry that we are going to need in the paper.

%\section{Fundamental Objects in the Geometric Langlands Program}

\subsection{The Fargue-Fontaine Curve}

Let $E$ be a non-archimedean local field with ring of integers $\mathcal{O}_E$ and residue field $\mathbb{F}_q$. Let $S = \Spa(R, R^+)$ be an affinoid perfectoid space over $\mathbb{F}_q$.

\begin{dfn}\label{dfn:adic_space_YS}
The adic space $\mathcal{Y}_S$ over $\mathcal{O}_E$ is defined as the complement of the vanishing locus of a pseudouniformizer $[\varpi]$ within the space of ramified Witt vectors:
\[ \mathcal{Y}_S := \Spa W_{\mathcal{O}_E}(R^+) \setminus V([\varpi]) \]
This space is equipped with a canonical Frobenius automorphism $\phi$. The generic fiber of $\mathcal{Y}_S$ over $E$ is denoted $Y_S$:
\[ Y_S := \mathcal{Y}_S \times_{\Spa \mathcal{O}_E} \Spa E \]
The action of the Frobenius $\sigma$ on $Y_S$ is free and totally discontinuous.
\end{dfn}
\begin{dfn}\label{dfn:FF_curve}
The \emph{Fargues--Fontaine curve} associated with a perfectoid space $S$ is the quotient adic space
\[
X_S := Y_S / \sigma^{\mathbb{Z}},
\]
where $Y_S$ is the relative period space and $\varphi$ denotes the Frobenius automorphism.
\end{dfn}

\begin{rmk}\label{rmk:radius_map}
Let $\kappa \colon \CY \to \mathbb{R}$ be the radius map as defined in~\cite{scholze2020berkeley}, and denote by $\CY_{[a,b]}$ the inverse image of the interval $[a,b]$ under this map. For a vector bundle $\CE$ on $\CY$, we write $\CE_{[a,b]}$ for its restriction to $\CY_{[a,b]}$.
\end{rmk}
\begin{dfn}\label{dfn:divisors}
The stack of effective Cartier divisors of degree~$1$ on the curve $\mathcal{Y}_S$, denoted $\Div_{\mathcal{Y}}^1$, is the functor on perfectoid spaces which assigns to a perfectoid space $S$ the groupoid of pairs $(S^\sharp, \tau)$, where $S^\sharp$ is an untilt of $S$ and $\tau : (S^\sharp)^{\flat} \xrightarrow{\;\sim\;} S$ is an isomorphism. 

The stack of effective Cartier divisors of degree~$1$ on the Fargues--Fontaine curve, denoted $\Div^1$, is defined similarly, except that we identify two such pairs if their isomorphisms differ by a power of the Frobenius.
\end{dfn}

\begin{dfn}\label{dfn:divisors_r}
For a positive integer $r$, the stacks of effective Cartier divisors of degree~$r$ on $\mathcal{Y}_S$ and on the Fargues--Fontaine curve are denoted by $\Div_{\mathcal{Y}}^r$ and $\Div^r$, respectively. They are defined as the quotient stacks
\[
\Div_{\mathcal{Y}}^r := [(\Div_{\mathcal{Y}}^1)^r / \Sigma_r], \qquad
\Div^r := [(\Div^1)^r / \Sigma_r],
\]
where $\Sigma_r$ is the symmetric group on $r$ elements acting by permutation of the components.
\end{dfn}

\begin{rmk}\label{rmk:period_rings}
Let $S$ be a perfectoid space. Then a point $x \in \Div_{\CY}^r(S)$ determines a closed adic subspace $\Gamma_x \subset \mathcal{Y}_S$,
corresponding to the effective Cartier divisor on the relative curve associated with $x$.
Let $I_x \subset \mathcal{O}_{\mathcal{Y}_S}$ denote the ideal sheaf defining the closed subspace $\Gamma_x$. We then define the associated period rings
\[
B_{x}^+ := \widehat{\mathcal{O}}_{\mathcal{Y}_S, I_x}, \qquad B_x := B_{x}^+[\frac{1}{I_x}],
\]
\end{rmk}

\subsection{Shtukas}

Let $G$ be a reductive group over $\BQ_p$ and $\mu$ be a cocharacter of $G$ defined over the local field $E$.

\begin{dfn}\label{dfn:Bun_G}
    The \emph{moduli stack of $G$-bundles}, denoted $\Bun_G$, is the v-stack whose $S$-points classify $G$-bundles on the relative Fargues--Fontaine curve $X_S$. If $S$ is a geometric point, the set of isomorphism classes of $G$-bundles on $X_S$ is in bijection with the Kottwitz set $B(G)$. This induces a stratification $$\Bun_G = \coprod_{b \in B(G)} \Bun_G^b$$, where each $\Bun_G^b$ is a locally closed substack corresponding to bundles of isocrystal class $b$. Associated to each $b \in G(\breve{\mathbb{Q}}_p)$ is a connected reductive group $G_b$ over $\mathbb{Q}_p$, the $\sigma$-centralizer of $b$. There exists an extension $\tilde{G}_b$ of $G_b$ by a unipotent group such that $$\Bun_G^b \simeq [*/\tilde{G}_b]$$.
    The pullback along this map induces an equivalance 
    $$D(\Bun_G^b.\Lambda)\simeq D([*/G^b(E)])$$
\end{dfn}

\begin{dfn}\label{dfn:modification}
Let $x \subset Y_S$ be an effective Cartier divisor. Denote by $U = Y_S \setminus D$ the open complement. Let $\CE_1$ and $\CE_2$ be vector bundles on $Y_S$. A \emph{modification of $\CE_1$ into $\CE_2$ along $x$} is an isomorphism
\[
\phi \colon \CE_1|_U \xrightarrow{\;\sim\;} \CE_2|_U.
\]
Two such modifications $(\CE_2, \phi)$ and $(\CE_2', \phi')$ are said to be equivalent 
if there exists an isomorphism $\psi \colon \CE_2 \xrightarrow{\sim} \CE_2'$ such that 
$\phi' = \psi|_U \circ \phi$.

A \emph{datum of modification} of $\CE_1$ at a Cartier divisor $x$ consists of a vector bundle 
 $\hat\CE_{2,x}$ over $B^+_{x}$ together with an isomorphism $\eta:\hat\CE_{2,x}[\frac{1}{I_x}]\to \CE_1\otimes B_{x}$.   
\end{dfn}
\begin{rmk}\label{rmk:BL_theorem}
By the Beauville–Laszlo theorem, giving a datum of modification of $\CE_1$ at $x$ is equivalent to giving a vector bundle $\CE_2$ on $Y_S$ together with a modification of $\CE_1$ into $\CE_2$ along $x$ that agrees with the given datum after base change to $B^+_{x}$.
\end{rmk}

\begin{dfn}\label{dfn:BL_morphism}
The \emph{Beauville-Laszlo morphism} provides a uniformization of $\Bun_G$ by the deRham affine Grassmannian, $\Gr_G$. It is a surjective map of v-stacks:
\[ \mathop{BL}: \Gr_G \to \Bun_G \] 
\end{dfn}

\begin{dfn}\label{dfn:shtukas}
Let $G$ be a reductive group over $\BQ_p$ with smooth model $\CG$ over $\BZ_p$, let $I$ be a finite index set of size $r$, and let $\mu = (\mu_i)_{i \in I}$ be a collection of dominant minuscule cocharacters of $\CG$ defined over 
\[
\CO_{E_I} = \prod_{i\in I} \CO_{E_i}.
\]

The \emph{moduli space of shtukas}, denoted $\Sht_{(G,\mu)}^{I}$, is the functor defined over $\Perf/\Spd \CO_{E_I}$,
which sends a perfectoid space $S$ to the groupoid of quintuples $(x,\CE,\iota)$ consisting of:
\begin{enumerate}
    \item a morphism $x \in \Div_{\CY}^r$ defining a collection of Cartier divisors 
    \(
        \Gamma_{x_i} \subset \CY_S,
    \)
    \item a $\CG$-bundle $\CE$ on the curve $\CY_S$,
    \item an isomorphism of $\CG$-bundles
    \[
    \iota \colon 
      \sigma^*\CE\big|_{\CY_S \setminus \Gamma_x} 
      \xrightarrow{\ \sim\ } 
      \CE\big|_{\CY_S \setminus \Gamma_x},
    \]
    bounded by~$\mu$.
    %\item \textbf{Level structure:} A pair of trivializations $(\eta_0,\eta_\infty)$ of $\CE$ over two fixed rational points, defining a $K$-level structure.
\end{enumerate}
\end{dfn}

\begin{rmk}\label{rmk:shtuka_bundle}
A Shtuka with no leg is equivalent to a $G$-bundle on the Fargues–Fontaine curve $X$. There is an equivalence between the points of the stack $\Bun_G$ and the Kottwitz set $B(G)$. For each $b \in B(G)$, we denote by $\CE_b$ the corresponding Shtuka with no legs. 

Let $\epsilon < \min_{i \in I} \kappa(x_i)$ and $r > \max_{i \in I} \kappa(x_i)$. Using the fact that the Frobenius is a contraction on the curve $Y_S$, we can extend $\CE_{[0,\epsilon]}$ and $\CE_{[r,\infty]}$ to Shtukas with no legs, denoted by $\CE_0$ and $\CE_\infty$, respectively. 

By the Beauville–Laszlo theorem, having a Shtuka with legs at the points $x_1, x_2, \dots, x_n$ is equivalent to specifying a Shtuka with no legs $\CE_0$ together with the data of its modifications at $x_1, x_2, \dots, x_n$.
\end{rmk}
\begin{rmk}\label{rmk:shtuka_locsys}
A shtuka with no leg, $\CE_0$, over a perfectoid space $S$ is equivalent to an \'etale $\BQ_p$-local system, $T_0$, over $S$. For a shtuka with one leg, $\CE$, let $\CE_\infty$ be the associated shtuka with no leg defined above and $T_\infty$ be the corresponding local system. We denote this association by $\pi_{\textup{\'et}}(\CE) = T_\infty$.
\end{rmk}
We also require a local version of the moduli space of Shtukas that lies over a given Newton stratum:
\begin{dfn}\label{dfn:integral_local_shimura}
Let $(G, b, \mu)$ be an integral Shimura datum, consisting of a smooth group scheme $\CG$ over $\BZ_p$, a cocharacter $\mu$ of $\CG$ defined over some extension of $\BZ_p$, and an element $b$ in the Kottwitz set $B(G,\mu) \subset B(G)$.

The \emph{integral local Shimura variety} associated with this datum, denoted
\(
\CM^{\mathrm{int}}_{G,\mu},
\)
is the functor defined over $\Perf / \Spd O_E$, 
which sends a perfectoid space $S$ to the set of equivalence classes of triples $(x, \CE, \tau_r)$, where:
\begin{enumerate}
    \item $x \in \Div_{\CY}^r$ is a morphism defining a collection of Cartier divisors 
    \(
       \Gamma_{x_i} \subset \CY_S.
    \)
    \item $\CE$ is a $\CG$-Shtuka bounded by $\mu$ (more precisely, the corresponding modification factors through the subsheaf $\CM^{\mathrm{loc}}_{\CG,\mu}$ defined in \cite[Chapter~25]{scholze2020berkeley}).
    \item $\tau_r$ is an isomorphism
    \[
      \tau_r : \CE_{[r,\infty]} \xrightarrow{\ \sim\ } \CE_{b,[r,\infty]},
    \]
    where $\CE_b$ denotes the Shtuka associated with $b$.
    %\item \textbf{Level structure:} A pair of trivializations $(\eta_0,\eta_\infty)$ of $\CE$ over two fixed rational points, defining a $K$-level structure.
\end{enumerate}
\end{dfn}

\begin{rmk}\label{rmk:local_shimura_generic}
The \emph{local Shimura variety} is the generic fiber of the integral local Shimura variety, we denote it by $\mathcal{M}_{G,\mu}$.
\end{rmk}
Over the generic fiber, one can give an equivalent definition in the terms of the vector bundles on the Fargues-Fontaine curve, one can also add the level structure:
\begin{dfn}\label{dfn:local_shimura_level}
Let $K\subset G(\BQ_p)$ be a compact open subgroup, the local Shimura variety associated to the Shimura datum $(G, \mu, b)$ is the functor from the category of perfectoid spaces over $\Spd E$, denoted $\Perf_{/\Spd E}$, to the category of groupoids. This functor assigns to a space $S \in \Perf_{/\Spd E}$ the groupoid of quadruples $(x, \mathcal{E}, \iota, \mathbb{P})$, where:
\begin{enumerate}
    \item A map $x: S \to \operatorname{Div}_X^1$ that defines a relative Cartier divisor $\Gamma_x \subset X_S$ of degree one.
    \item A $G$-bundle $\mathcal{E}$ on the relative Fargue-Fontaine curve $X_S$ that becomes trivial over any geometric point of $S$.
    \item An isomorphism of $G$-bundles
    \[ \iota: \mathcal{E}|_{X_S \setminus \Gamma_x} \xrightarrow{\sim} \mathcal{E}^b|_{X_S \setminus \Gamma_x} \]
    which is meromorphic along the divisor $\Gamma_x$ and is bounded by $\mu$.
    \item A $K$-lattice $\mathcal{T}$ within the \'etale local system $\pi_{\textup{\'et}}(\mathcal{E})$.
\end{enumerate}
\end{dfn}

\begin{thm}\label{thm:shtuka_representability}
The functor $\Sht_{(G,\mu,K)}^{I}$ is representable by a spatial diamond, this diamond is a v-stack which is separated and locally of finite type over $\Spd(E)$.
\end{thm}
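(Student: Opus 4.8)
The plan is to follow the strategy of Scholze--Weinstein for moduli of shtukas \cite{scholze2020berkeley}, with no essential new input. First I would verify that $\Sht^I_{(G,\mu,K)}$ is a small v-stack: the divisor datum $x\in\Div^r_{\CY}$, the $\CG$-bundle $\CE$ on $\CY_S$, the Frobenius-semilinear isomorphism $\iota$ bounded by the minuscule $\mu$, and the $K$-level structure on the associated \'etale local system are each of v-descent (for $\CG$-bundles on relative Fargues--Fontaine curves this is the descent theory of Kedlaya--Liu and \cite{scholze2020berkeley}; for the level structure it is v-descent of pro-\'etale $\BQ_p$-local systems). Next, using the presentation $\Div^r_{\CY}=[(\Div^1_{\CY})^r/\Sigma_r]$ of Definition~\ref{dfn:divisors_r} together with the factorization property of shtukas --- disjoint legs glue, colliding legs are governed by the Beilinson--Drinfeld convolution Grassmannian, which for minuscule cocharacters remains a proper spatial diamond --- one reduces to the case of a single leg, so that it suffices to treat $\Sht^{\{*\}}_{(\CG,\mu,K)}$; I will work over $\Spd\CO_E$, the generic fibre over $\Spd E$ being the object in the statement.

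For one leg I would use Remark~\ref{rmk:shtuka_bundle}: fixing $\epsilon<\kappa(x)<r$ and using that the Frobenius is a free, totally discontinuous contraction on $\CY_S$ (Definition~\ref{dfn:adic_space_YS}), the restriction $\CE_{[r,\infty]}$ with its Frobenius extends to a shtuka with no leg $\CE_\infty$, and by the Beauville--Laszlo description (Remark~\ref{rmk:BL_theorem}) the datum $(\CE,\iota,x)$ is equivalent to the pair consisting of $\CE_\infty$ --- an \'etale $\BQ_p$-local system by Remark~\ref{rmk:shtuka_locsys} --- together with a datum of modification at the moving divisor $x$ bounded by $\mu$, the $K$-level structure being carried by $\pi_{\et}(\CE)\cong\pi_{\et}(\CE_\infty)$. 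Passing to infinite level, a trivialization of $\pi_{\et}(\CE_\infty)$ rigidifies $\CE_\infty$ and converts the modification datum into a point of the $B^+_{\mathrm{dR}}$-affine Grassmannian $\Gr_{\CG}$ at $x$; boundedness by the minuscule $\mu$ places it in the Schubert variety $\Gr_{\CG,\le\mu}$ (equivalently $\CM^{\mathrm{loc}}_{\CG,\mu}$ of \cite[Ch.~25]{scholze2020berkeley}) over $\Div^1_{\CY}$, which by \cite{scholze2020berkeley} is a spatial diamond, proper over $\Div^1_{\CY}$ --- indeed for minuscule $\mu$ it is the diamond attached to the flag variety $\CG/P_\mu$, hence separated and of finite type. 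Consequently $\Sht^{\{*\}}_{(\CG,\mu),\infty}$ is a $\underline{G(\BQ_p)}$-torsor over $\Gr_{\CG,\le\mu}$, hence a locally spatial diamond, and $\Sht^{\{*\}}_{(\CG,\mu,K)}=\Sht^{\{*\}}_{(\CG,\mu),\infty}/\underline K$ is again one (it maps to $\Gr_{\CG,\le\mu}$ as a torsor under the discrete group $G(\BQ_p)/K$, so it is \'etale and separated over a proper spatial diamond); separatedness and local finite-typeness over $\Spd E$ follow.

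The main obstacle is the honest realization of the equivalence in the previous paragraph: a $\CG$-bundle on $\CY_S$ trivializes only v-locally, so one must genuinely work with v-descent and with the Harder--Narasimhan/Newton stratification of $\Bun_\CG$ (Definition~\ref{dfn:Bun_G}), using $\Bun_\CG^b\simeq[\ast/\tilde G_b]$ to see that the stratifying conditions are locally closed and that the level structure on $\pi_{\et}(\CE_\infty)$ is well posed; for general $\CG$ this routes through the Tannakian formalism for $\CG$-torsors and the group-theoretic $B^+_{\mathrm{dR}}$-affine Grassmannian, where the decisive inputs --- spatiality of $\Gr_{\CG}$, properness of the Schubert variety $\Gr_{\CG,\le\mu}$ for minuscule $\mu$, and its identification with a flag variety --- are precisely what I import from \cite{scholze2020berkeley}. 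A secondary point, routine but to be checked, is that the reconstruction of $\CE$ on all of $\CY_S$ from $(\CE_\infty,\text{modification at }x)$ by Frobenius-propagation across the $\sigma$-translates of a fixed annulus is well defined and functorial in $S$; this is exactly where the freeness and total discontinuity of the $\sigma$-action (Definition~\ref{dfn:adic_space_YS}) enter.
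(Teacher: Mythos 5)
First, a point of context: the paper offers no proof of this statement at all --- it is recalled as background, implicitly from \cite{scholze2020berkeley} --- so the only meaningful comparison is with the Scholze--Weinstein argument you are reconstructing, and there your central step fails. The objects $\Sht^{I}_{(G,\mu)}$ of Definition~\ref{dfn:shtukas} carry no framing at the $\infty$-end (no isomorphism $\tau_r\colon \CE_{[r,\infty]}\simeq \CE_{b,[r,\infty]}$ as in Definition~\ref{dfn:integral_local_shimura}). For such unframed shtukas, trivializing $\pi_{\et}(\CE_\infty)$ does \emph{not} exhibit the infinite-level space as a $\underline{G(\BQ_p)}$-torsor over $\Gr_{\CG,\le\mu}$: it identifies it with $\Gr_{\CG,\le\mu}$ itself, equivariantly for the translation action, since changing the trivialization by $g$ moves the induced point of the Grassmannian by $g$. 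Consequently the level-$K$ object is the quotient $v$-stack $[\Gr_{\CG,\le\mu}/\underline{K}]$, and this action is far from free: the stabilizer in $K$ of a point of the (flag-variety) Schubert cell is a large subgroup, which is exactly the automorphism group of the corresponding unframed shtuka with level structure. So the finite-level object is not a spatial diamond and is not \'etale or separated over $\Gr_{\CG,\le\mu}$; your deduction ``torsor under $G(\BQ_p)$ over all of $\Gr_{\le\mu}$, hence finite level is \'etale over a proper spatial diamond'' collapses at this point. Note that the paper itself uses precisely the quotient-stack description $[\Gr_{G,\mu^{-1}}/K_p]$ of the generic fibre of the unframed shtuka stack in Remark~\ref{rmk:generic_fiber_cartesian}.

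The torsor structure you invoke belongs to the \emph{framed} moduli spaces (the local Shimura varieties of Definitions~\ref{dfn:integral_local_shimura} and~\ref{dfn:local_shimura_level}), which is what \cite{scholze2020berkeley} actually proves representability for: after fixing $b$ and the framing, the infinite-level space is a $\underline{G(\BQ_p)}$-torsor not over all of $\Gr_{\CG,\le\mu}$ but over the open \emph{admissible locus} where the modified bundle is fibrewise trivial, and the finite-level space is the quotient by the then-free $\underline{K}$-action on trivializations. The genuinely hard inputs --- openness of the admissible locus, the behaviour of quotients of locally spatial diamonds by profinite groups, and, for several legs, properness/partial properness over the product of the $\Spd \breve{E}_i$ together with the collision analysis --- are exactly what your sketch replaces by the incorrect ``\'etale over a flag variety'' shortcut. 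So you must choose: either you mean the unframed stack of Definition~\ref{dfn:shtukas}, in which case the statement to prove is the identification with $[\Gr_{\CG,\le\mu}/\underline{K}]$ (an Artin-type $v$-stack, not a spatial diamond), or you mean the framed spaces of Scholze--Weinstein, in which case $b$, the framing, and the admissible locus must be reinstated and the argument rebuilt around them. The remaining ingredients of your write-up (v-descent, reduction to one leg, Beauville--Laszlo reconstruction from a no-leg shtuka plus a modification datum, properness of $\Gr_{\CG,\le\mu}$ for minuscule $\mu$) are fine in spirit, but they do not bridge this gap.
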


\begin{dfn}\label{dfn:hecke_stack}
Let $\CY$ denote the relative Fargues--Fontaine curve and $\CG$ a reductive group over $\BZ_p$.  
The \emph{Hecke stack} $\Hck_G$ classifies modifications of $G$-bundles on $\CY$ at effective Cartier divisors.

For a perfectoid space $S$ over $\Spd E$, the groupoid $\Hck_G(S)$ consists of tuples $(\CE_1, \CE_2, x, \phi)$,
where
\begin{itemize}
    \item $\CE_1$ and $\CE_2$ are $G$-bundles on $\CY_S$,
    \item $x \in \Div_{\CY}(S)$ is an effective Cartier divisor on $\CY_S$, and
    \item $\phi \colon \CE_1|_{\CY_S \setminus \Gamma_x} \xrightarrow{\;\sim\;} \CE_2|_{\CY_S \setminus \Gamma_x}$
          is an isomorphism of $G$-bundles on the complement of the graph $\Gamma_x \subset \CY_S$.
\end{itemize}

This stack naturally fits into the \emph{Hecke correspondence}:
\[
\begin{tikzcd}[row sep=large, column sep=huge]
 & \Hck_G \arrow[dl, "h_1"'] \arrow[dr, "{(h_2,\;x)}"] & \\
 \Bun_G & & \Bun_G \times \Div_{\CY}
\end{tikzcd}
\]
where
\[
h_1(\CE_1, \CE_2, x, \phi) = \CE_1, \qquad
h_2(\CE_1, \CE_2, x, \phi) = \CE_2, \qquad
x(\CE_1, \CE_2, x, \phi) = x.
\]
\end{dfn}

\begin{dfn}\label{dfn:bounded_hecke}
Fix a cocharacter $\mu$ of $G$ defined over a suitable extension of $\BZ_p$.  
The \emph{bounded Hecke stack} $\Hck_{G, \le \mu} \subset \Hck_G$ is the substack parameterizing only those modifications whose relative position at $x$ is bounded by $\mu$. 
\end{dfn}

\begin{dfn}\label{dfn:period_map}
The \emph{period map} is a morphism of v-stacks that forgets the shtuka isomorphism and level structure:
\[ \pi: \Sht_{G,\mu,K} \to \Bun_{G,\mu^{-1}} \]
That sends a shtuka $\CE$ to the $G$-bundle associated to the shtuka with no leg $\CE_0$.
\end{dfn}

\begin{dfn}\label{dfn:shtuka_over_F}
Let $\CG$ be a reductive group over $\mathbb{Z}_p$, and let $F$ be a $v$-sheaf on the category of perfectoid spaces over $\Spd(\mathbb{Z}_p)$.  

A \emph{$\CG$-shtuka over $F$} is a rule which assigns to every morphism 
\[
x : \Spd(R) \longrightarrow F
\]
from an affinoid perfectoid $\operatorname{Spd}(R)$ a $G$-shtuka $\mathcal{E}_x$ over $R$, in a way that is compatible with pullback. 

A $G$-shtuka on $F$ determines a pro-\'etale $\CG(\mathbb{Z}_p)$-local system on the \'etale site $F_{\et}$. If the adic generic fiber $F^{\mathrm{rig}}$ of $F$ is representable by a rigid-analytic space, this further induces a pro-\'etale $G(\mathbb{Z}_p)$-local system on $F^{\mathrm{rig}}$.  

In particular, when $S$ is a smooth rigid-analytic space over $\mathbb{Q}_p$, this construction yields an equivalence between de~Rham $\CG(\mathbb{Z}_p)$-local systems on $S$ and $\CG$-shtukas on~$S$.
\end{dfn}

\subsection{Shimura varieties and the Hodge tate period map}
Let $(G, \mu)$ be a Shimura datum, and let $K = K^p K_p \subset G(\mathbb{A}_{\mathbb{Q}}^{\infty})$ be a compact open subgroup. 
Consider the corresponding Shimura variety $\Sh(G, \mu)_K$.  
We denote by $\Sh^{\mathrm{rig}}(G, \mu)_K$
the analytification of this Shimura variety as a rigid-analytic space, and by $\mathcal{S}_K$
its integral model (in the cases where such an integral model exists).  

We define the Shimura variety at the \emph{$p^\infty$-level} as the inverse limit over all open compact subgroups $K_p \subset G(\mathbb{Q}_p)$:
\[
\Sh_{K^p} := \varprojlim_{K_p} (\Sh_{K_p K^p}(G,\mu))^\diamondsuit.
\]
The pro-\'etale cover $\mathcal{S}_{K^p}$ induces a $K_p$-local system 
$\mathbb{V}_{\mathrm{\acute{e}t}}$
on $\Sh_K(G,\mu)$.  
By \cite{liu2017rigidity}, this local system is de~Rham, and therefore gives rise to a morphism
\[
\pi_{\mathrm{HT}} : \Sh_{K^p} \longrightarrow \Gr_{G, \mu^{-1}}.
\]
It also determines a $G$-shtuka over $\Sh(G,\mu)$. This gives us a period map
\[
\pi_{\mathrm{Crys}}:\Sh_K(G,\mu)\to \Sht_{G,\mu}
\]
For further details on this construction, we refer the reader to \cite{pappas2022p}.

\subsection{Stack of ($G,\mu$)-displays}\label{sectiondisplays}
Let $\CG$ be a reductive group scheme over $\BZ_p$ and let $\mu$ be a cocharacter of $\CG$ defined over an extension of $\BZ_p$. In \cite{hedayatzadeh2024deformations}, we defined the stacks $\CB^{\perf}_{\CG,\mu}$ and $\CB^{\perf}_{\CG,\mu,b}$. We now briefly recall the construction.

Let \( R^+ \) be an integral perfectoid ring, and consider the étale site \( (R^+)_{\mathrm{\acute{e}t}} \) of integral perfectoid \( R^+ \)-algebras. Let
\[
\theta \colon A_{\mathrm{inf}}(R^+) \to R^+
\]
be the canonical surjection, and let \( (d) := \ker(\theta) \). For each \( i \in \mathbb{Z} \), the Nygaard filtration is defined by
\[
N^i A_{\mathrm{inf}}(R^+) := \sigma^{-i}(d),
\]
where \( \sigma \) denotes the Frobenius endomorphism.

Let \( \CG = \operatorname{Spec} S \) be a reductive group scheme over \( \mathbb{Z}_p \). The cocharacter \( \mu \) induces a filtration on \( S \).

\begin{dfn}\label{loopgroupdfn}
We define sheaves of groups \( L^+\CG \) and \( \CG^\mu \) on the site \( (R^+)_{\mathrm{\acute{e}t}} \) as follows. For an object \( S^+ \) of \( (R^+)_{\mathrm{\acute{e}t}} \),
\[
L^+\CG(S^+) := \CG\bigl(A_{\mathrm{inf}}(S^+)\bigr),
\]
and \( \CG^\mu(S^+) \) is the group of morphisms \( \operatorname{Spec}(S^+) \to \CG \) that are compatible with the filtrations induced by \( \mu \) and the Nygaard filtration.
\end{dfn}

Consider the morphism of sheaves
\[
\Phi_d^\mu \colon \CG^\mu \to L^+\CG,
\]
defined on sections by
\[
\Phi_d^\mu(k) = \mu(d)\,\sigma(k)\,\mu(d)^{-1}.
\]
Using this morphism, we define a right action of \( \CG^\mu \) on \( L^+\CG \) by
\[
U \cdot k := k^{-1} \, U \, \Phi_d^\mu(k),
\]
for \( k \in \CG^\mu(S^+) \) and \( U \in L^+\CG(S^+) \).

\begin{dfn}
We define the stack \( \CB^{\perf}_{\CG,\mu} \) as the quotient stack
\[
\CB^{\perf}_{\CG,\mu} := [L^+\CG / \CG^\mu].
\]
\end{dfn}

\begin{rmk}\label{rmk:B_perf_embedding}
Let \(R^+\) be an integral perfectoid ring. 
By \cite[Proposition 8.3]{hedayatzadeh2024deformations}Then there exists a fully faithful embedding
\[
\CB^{\perf}_{\CG,\mu}(R^+)\;\hookrightarrow\;
\Sht_{\CG,\mu}(R^+),
\]
whose essential image consists precisely of those shtukas that can be extended to the~\(\infty\)-point of the curve~$\CY$. In particular we have an isomorphsim There is an isomorphism between the stacks 
\(\Sht_{G,\mu,b}\) and \(\CB^{\perf}_{\CG,\mu,b}\).
\end{rmk}

Let $\CG_0$ be a reductive group defined over $\BZ_{p^r}$ and $\CG=\Res_{\BZ_{p^r}/\BZ_p} \CG_0$. Let $\boldsymbol{\mu}$ be a cocharacter of $\CG_0^r$ defined over $\BZ_{p^r}$. 
Define $\Gamma\subset \BZ/r\BZ$ to be the subset consisting of those $i$ for which $\mu_i\neq \mathrm{id}$. 
Consider the natural bijection
\[
\mathrm{mod} \colon \mathbb{Z}/r\mathbb{Z} \xrightarrow{\sim} \{1,2,\dots,r\},
\]
and let $\gamma_i$ denote the $i$-th element of $\Gamma$ under the ordering induced by this bijection.  
Set
\[
r_i^{+} := \mathrm{mod}(\gamma_{i+1} - \gamma_i) \in \mathbb{N}.
\]
For each $1 \le i \le r$, define
\[
[\gamma_i, \gamma_{i+1}) \subset \mathbb{Z}/r\mathbb{Z}
\]
to be the subset consisting of those elements $w \in \mathbb{Z}/r\mathbb{Z}$ such that
\[
\gamma_i \le w < \gamma_{i+1} \quad \text{if } i < |\Gamma|,
\qquad 
\text{and} \qquad 
\gamma_i < w \text{ or } w < \gamma_1 \quad \text{if } i = |\Gamma|.
\]

Finally, for $w \in [\gamma_i,\gamma_{i+1})$, define
\[
t^{+}(w) := \mathrm{mod}(w - \gamma_i).
\]

Let $\pi\colon G_0\to \GL(V)$ be a representation, and for $i\in \BZ/r\BZ$, let $w_i$ be the highest weight of $\pi\circ\mu_i$. 
We call this representation \emph{sparse} if, for every $\gamma\in \Gamma$, we have $r_i^+ \ge w_i$. 
In this setting, one can define (see \cite[Construction]{hedayatzadeh2025embeddings}) the cocharacter $\boldsymbol{\mu}^\triangleright$ of $G$ such that 
\[
\prod_{\gamma_{i-1}+1}^{\gamma_i}\boldsymbol{\mu}^\triangleright_{w}=\boldsymbol\mu_{\gamma_i}
\quad\text{and}\quad
\sum_{\sigma\in \Gal(\BQ_{p^r}/\BQ_p)} \boldsymbol\mu^\triangleright=\sum_{\sigma\in \Gal(\BQ_{p^r}/\BQ_p)}\boldsymbol\mu.
\]
Let $\tilde{\CG} := \Res_{\BZ_{p^r}/\BZ_p}\GL(V)$. 
Assume that $\pi$ is a sparse representation. 
We call the datum $(\CG,\mu,\pi)$ a \emph{sparse display datum}.

Let $(\CG,\mu,\pi)$ be a sparse Shimura datum. 
There exists a functor 
\[
\Flex:\CB^{\perf}_{\CG,\mu}\longrightarrow\CB^{\perf}_{\tilde{\CG},\mu^\triangleright}
\]
constructed in~\cite[Construction 3.19]{hedayatzadeh2025embeddings}.

\begin{pro}[{\cite[Prop.~3.16]{hedayatzadeh2025embeddings}}]\label{pro:flex_isogeny}
For any perfectoid space~$S$ and $D\in\CB^{\perf}_{\CG,\mu}(S)$,
there exists an isogeny modulo~$p$ between $D$ and $\Flex(D)$.
\end{pro}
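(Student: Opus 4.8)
The plan is to reduce the statement to one about $\GL$-type display data, extract the comparison morphism implicit in the construction of $\Flex$, show that it is a quasi-isogeny, and then check that its reduction modulo $p$ is a genuine isogeny.

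\emph{Reduction to $\GL$.} By construction $\Flex(D)\in\CB^{\perf}_{\tilde{\CG},\mu^\triangleright}$, and composing $D$ with the map $\Res\pi\colon\CG\to\tilde{\CG}=\Res_{\BZ_{p^r}/\BZ_p}\GL(V)$ produces a second object $(\Res\pi)_*D\in\CB^{\perf}_{\tilde{\CG},\bar\mu}$, where $\bar\mu=(\Res\pi)_*\boldsymbol\mu$ has highest weight $w_i$ in its $\gamma_i$-th Weil-restriction component and is trivial in the components outside $\Gamma$. It suffices to produce an isogeny modulo $p$ between $(\Res\pi)_*D$ and $\Flex(D)$. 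Since $\tilde{\CG}$ is a Weil restriction of $\GL(V)$, by Remark~\ref{rmk:B_perf_embedding} these two objects are encoded by finite projective $W_{\CO_E}(R^+)$-modules $\bar M=\bigoplus_{i\in\BZ/r\BZ}\bar M_i$ and $\tilde M=\bigoplus_{w\in\BZ/r\BZ}\tilde M_w$ with $\sigma$-linear Frobenii $F\colon\bar M_i\to\bar M_{i+1}$, $F\colon\tilde M_w\to\tilde M_{w+1}$ (this graded picture being valid after an fppf cover splitting off $\BF_{p^r}$, the general case following by descent). On the $\bar M$ side $F$ is an isomorphism for $i\notin\Gamma$ and has cokernel a line bundle of colength $w_i$ along the degree-one divisor for $i\in\Gamma$; the defining relations $\prod_{\gamma_{i-1}+1}^{\gamma_i}\boldsymbol\mu^\triangleright_w=\boldsymbol\mu_{\gamma_i}$ together with the fact that each $\boldsymbol\mu^\triangleright_w$ is minuscule say that on the $\tilde M$ side each block $(\gamma_{i-1},\gamma_i]$ carries exactly $w_i$ Frobenii of colength one and $r_i^+-w_i$ isomorphisms — the sparseness $r_i^+\ge w_i$ being precisely what makes this legal.

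\emph{The comparison morphism.} I would then build $f\colon\tilde M\to\bar M$ block by block: identify $\tilde M_{\gamma_i}\xrightarrow{\ \sim\ }\bar M_{\gamma_i}$ at the top of each block, and for an interior index $w\in(\gamma_{i-1},\gamma_i]$ compose the appropriate Frobenii of $\bar M$ and multiply by the power of the divisor's equation dictated by how many colength-one jumps of $\tilde M$ separate $w$ from $\gamma_i$, so as to land in the correct component of $\bar M$. By construction $f$ intertwines the two Frobenii, becomes an isomorphism away from the legs (hence is a quasi-isogeny), and restricts to the identity over $\CY_{[r,\infty]}$; in particular $(\Res\pi)_*D$ and $\Flex(D)$ have the same shtuka with no leg $\CE_\infty$, so the same isocrystal $b$. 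Reducing modulo $p$, the kernel and cokernel of $f$ are supported on the legs $\Gamma_{x_i}$, and the colength bookkeeping ($\sum_i w_i$ on either side) shows they are finite locally free over $R^+/p$; this is the asserted isogeny modulo $p$. Finally $f$ is functorial in the tensor data, so it respects the tensors cutting out $\CG\hookrightarrow\tilde{\CG}$: when $\pi$ is faithful one upgrades $f$ by Tannakian formalism to a $\CG$-quasi-isogeny whose mod-$p$ reduction is the desired isogeny between $D$ and $\Flex(D)$, and in general one simply records the $\tilde{\CG}$-version.

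\emph{Main obstacle.} The crux is the mod-$p$ check: proving that the reduction of $f$ is a genuine isogeny — finite locally free kernel and cokernel — uniformly over an \emph{arbitrary} perfectoid base $R^+$, rather than merely a quasi-isogeny. This forces one to control the local structure of $\cokernel(F)$ at the legs in families, and it is here that the sparseness hypothesis $r_i^+\ge w_i$ and the minuscule nature of the individual $\boldsymbol\mu^\triangleright_w$ do the real work: they keep the spread-out unit jumps inside a block from colliding, so that each local cokernel stays a line bundle on its divisor and the total colength matches on the two sides. The remaining ingredients — the Weil-restriction index bookkeeping, the descent from the $\BF_{p^r}$-split situation, and the Tannakian upgrade — are routine.
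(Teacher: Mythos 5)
First, a structural point: this paper does not prove Proposition~\ref{pro:flex_isogeny} at all --- it is imported verbatim as \cite[Prop.~3.16]{hedayatzadeh2025embeddings}, and the construction of $\Flex$ itself lives in that reference. So there is no in-paper proof to measure your sketch against; I can only judge it on its own terms as an attempted reconstruction of the external argument (whose ancestor is B\"ultel's spreading-out of a non-minuscule Hodge filtration along a cycle). Your overall strategy --- push $D$ into $\CB^{\perf}_{\tilde{\CG},\,\cdot}$ via the sparse representation, realize both sides as graded $W_{\CO_E}(R^+)$-modules with Frobenii shifting the $\BZ/r\BZ$-grading, build a comparison map block by block that intertwines the Frobenii and is an isomorphism away from the legs, and then verify the mod-$p$ kernel and cokernel are finite locally free --- is the natural and plausible route, and the reduction to the $\GL$-level is the right way to make sense of an ``isogeny between $D$ and $\Flex(D)$'' when the two objects live over different groups.

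There are, however, two genuine gaps. First, your colength bookkeeping is too naive for a general sparse representation: you assert that at $\gamma_i$ the cokernel of $F$ on $\bar M_{\gamma_i}$ is a line bundle of colength $w_i$, and that each minuscule $\boldsymbol\mu^\triangleright_w$ in the block contributes a colength-one jump. In general $\pi\circ\mu_{\gamma_i}$ has several nontrivial weights with multiplicities, so $\cokernel(F)$ at $\gamma_i$ has elementary divisors given by the full weight decomposition (only its \emph{maximal} exponent is $w_i$), and likewise each $\boldsymbol\mu^\triangleright_w$ being minuscule means its jump is a direct summand of possibly large rank, not colength one. Consequently the comparison map cannot be ``multiply the whole component by one power of the divisor equation per step''; it has to be defined weight-by-weight (different powers on different weight subspaces), and matching total colengths on the two sides is exactly the nontrivial compatibility that the defining relation $\prod\boldsymbol\mu^\triangleright_w=\boldsymbol\mu_{\gamma_i}$ and admissibility must be shown to guarantee --- you assume it rather than prove it. Second, you yourself flag the crux --- finite local freeness of the mod-$p$ kernel and cokernel uniformly over an arbitrary perfectoid base, rather than mere quasi-isogeny --- as ``the main obstacle'' and leave it open; but that is precisely the content of the proposition, so as it stands the proposal is a plan for a proof, not a proof.
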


\begin{comment}
\begin{rmk}[{\cite[Rem.~4.3]{Bar25}}]
The isogeny modulo~$p$ induces, by Tannakian formalism, a correspondence
between the Frobenius-invariant tensors of~$D$ and those of~$\Flex(D)$.
Hence, one obtains a functor
\[
\Flex^{\tan}\colon \CB^{\perf}_{G,\mu} \to \CB^{\tan}_{G,\mu},
\]
which is fully faithful by~\cite[Prop.~4.4]{Bar25}.
\end{rmk}
\end{comment}

When the underlying Shimura datum is regularly sparse, 
the functor~$\Flex$ induces a closed immersion between the corresponding local Shimura varieties.

\begin{thm}[{\cite[Lem.~5.4]{hedayatzadeh2025embeddings}}]\label{thm:flex_immersion}
Let $(G,\mu,b)$ be a regularly sparse integral Shimura datum.
Then the morphism $\Flex\colon \CM^{\mathrm{int}}_{G,\mu,b} \longrightarrow \CM^{\mathrm{int}}_{\tilde{G},\,\mu^\triangleright,b}$
between the associated integral local Shimura varieties is representable by a closed immersion.
\end{thm}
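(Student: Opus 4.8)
The plan is to realize $\Flex$ as a morphism of integral local Shimura varieties, check that it is a monomorphism of $v$-sheaves, and then exhibit its image as a closed subfunctor of $\CM^{\mathrm{int}}_{\tilde{G},\mu^\triangleright,b}$; a monomorphism of $v$-sheaves onto a closed subfunctor is a closed immersion, as one verifies by base change to affinoid perfectoid test objects, so this suffices. The organizing idea is that $\Flex$ behaves like ``induce the structure group along $\CG\hookrightarrow\tilde{\CG}$ and then rearrange the legs'', where the leg-rearrangement fails to be an isomorphism only by an isogeny modulo $p$; Proposition \ref{pro:flex_isogeny} is exactly the bookkeeping for this discrepancy, and the regularity built into the hypothesis is what keeps the resulting Hodge-theoretic constraints closed.

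First I would build the morphism. Given $(x,\CE,\tau_r)\in\CM^{\mathrm{int}}_{G,\mu,b}(S)$, I view $\CE$ near the $\infty$-point of $\CY_S$ as a $(\CG,\mu)$-display by Remark \ref{rmk:B_perf_embedding}, apply $\Flex$, and reinterpret the output $(\tilde{\CG},\mu^\triangleright)$-display as a $\tilde{\CG}$-shtuka $\Flex(\CE)$ bounded by $\mu^\triangleright$. The delicate point is transporting the framing: by Proposition \ref{pro:flex_isogeny} there is an isogeny modulo $p$ between $\CE$ and $\Flex(\CE)$, so in particular their shtukas with no leg at $\infty$ have the same attached isocrystal, namely the image of $b$ in $B(\tilde{G})$ — the distinguished class appearing in the target datum. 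Hence $\tau_r$ pushes forward to $\Flex(\tau_r)\colon\Flex(\CE)_{[r,\infty]}\xrightarrow{\ \sim\ }\CE_{b,[r,\infty]}$. Compatibility with pullback is formal, and both moduli problems are rigidified by their framings, so $\Flex$ defines a morphism $\CM^{\mathrm{int}}_{G,\mu,b}\to\CM^{\mathrm{int}}_{\tilde{G},\mu^\triangleright,b}$ of $v$-sheaves.

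Next I would prove injectivity on $S$-points. Since $\pi\colon\CG_0\to\GL(V)$ is faithful, $\CG=\Res_{\BZ_{p^r}/\BZ_p}\CG_0$ is the stabilizer of a finite collection of tensors inside $\tilde{\CG}=\Res_{\BZ_{p^r}/\BZ_p}\GL(V)$; transporting the Frobenius structure through the isogeny modulo $p$ of Proposition \ref{pro:flex_isogeny}, one reconstructs the $\CG$-reduction, i.e.\ the display $\CE$, uniquely from $\Flex(\CE)$, and then $\tau_r$ as the unique framing inducing $\Flex(\tau_r)$. So $\Flex$ is faithful and reflects isomorphisms, hence a monomorphism. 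For the image, I would characterize those $(\tilde{x},\tilde{\CE},\tilde{\tau}_r)\in\CM^{\mathrm{int}}_{\tilde{G},\mu^\triangleright,b}$ admitting a flexed $\CG$-reduction: running the comparison of Proposition \ref{pro:flex_isogeny} backwards produces candidate tensors, and the conditions that these be Frobenius-equivariant and, after un-flexing the legs, lie in the prescribed position $\CM^{\mathrm{loc}}_{\CG,\mu}\subset\CM^{\mathrm{loc}}_{\tilde{\CG},\mu^\triangleright}$ are both closed — the first being the vanishing of finitely many sections over the curve, the second because the \emph{regularly sparse} hypothesis forces the local-model map $\CM^{\mathrm{loc}}_{\CG,\mu}\to\CM^{\mathrm{loc}}_{\tilde{\CG},\mu^\triangleright}$ to be a closed immersion. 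Combined with the representability of the generic fibers as separated, locally finite type spatial diamonds over $\Spd E$ (Theorem \ref{thm:shtuka_representability}) and its integral refinement, base change of $\Flex$ along any $\Spa(R,R^+)\to\CM^{\mathrm{int}}_{\tilde{G},\mu^\triangleright,b}$ then has value the vanishing locus of these sections, which is Zariski-closed; thus $\Flex$ is a closed immersion.

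The main obstacle is controlling the essential image in the last step. Because $\Flex$ genuinely permutes and fuses the legs of the shtuka rather than merely inducing along a group homomorphism, a $(G,\mu)$-display and its image coincide only up to an isogeny modulo $p$, so the ``tensors cut out $\CG$'' argument does not by itself yield an honest reduction of structure group; one must use the regularity of $\mu$ to pin down the Hodge filtration and turn the reconstruction into a closed condition. Without regularity the local-model comparison need not be an immersion, and then both the description of the image and the monomorphism property can break — which is precisely why the theorem is stated for regularly sparse data.
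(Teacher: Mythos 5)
This statement is not proved in the paper at all: it is imported verbatim from \cite[Lem.~5.4]{hedayatzadeh2025embeddings}, so there is no internal argument to compare your sketch against. Judged on its own terms, your outline follows the plausible route (define $\Flex$ on moduli points via Remark \ref{rmk:B_perf_embedding} and Proposition \ref{pro:flex_isogeny}, show it is a monomorphism, exhibit the image as closed), but two of its steps have genuine gaps. First, the monomorphism step: you assert that ``transporting the Frobenius structure through the isogeny modulo $p$ one reconstructs the $\CG$-reduction uniquely from $\Flex(\CE)$.'' The isogeny of Proposition \ref{pro:flex_isogeny} is extra data attached to a point of the \emph{source}; it is not intrinsic to a point of the target, so it gives no mechanism for recovering $\CE$ (or the framing $\tau_r$) from $\Flex(\CE)$ alone. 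Since $\Flex$ is not induction along a group homomorphism but a redistribution of Frobenius twists/legs, the standard ``$\CG$ is the stabilizer of tensors in $\tilde{\CG}$'' argument does not apply verbatim; full faithfulness of the Flex functor on the relevant groupoids is itself a nontrivial input (the paper's commented-out remark cites it from yet another source) and cannot simply be asserted.

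Second, the description of the essential image is circular as written: ``running the comparison of Proposition \ref{pro:flex_isogeny} backwards produces candidate tensors'' presupposes an isogeny to an object of the form $\Flex(\CE)$, which is exactly what one is trying to detect for an arbitrary point $(\tilde x,\tilde\CE,\tilde\tau_r)$ of $\CM^{\mathrm{int}}_{\tilde G,\mu^\triangleright,b}$. One needs an intrinsic, manifestly $v$-closed condition on such points, and constructing it is the heart of the matter; likewise the claim that regular sparseness makes a map $\CM^{\mathrm{loc}}_{\CG,\mu}\to\CM^{\mathrm{loc}}_{\tilde\CG,\mu^\triangleright}$ a closed immersion needs an actual construction of that map, which is delicate precisely because the boundedness cocharacters $\mu$ and $\mu^\triangleright$ live on differently arranged legs. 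Finally, ``monomorphism with closed image implies closed immersion'' is not automatic for $v$-sheaves: you must verify that the base change along any $\Spa(R,R^+)\to\CM^{\mathrm{int}}_{\tilde G,\mu^\triangleright,b}$ is representable by a closed subspace (e.g.\ via quasicompactness/specializing arguments), and in your sketch this representability is exactly what is being assumed rather than proved. As it stands the proposal is an outline of a strategy rather than a proof; the substantive content lies in the steps it defers.
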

\begin{rmk}\label{rmk:upgrade_flex}
Over the generic fiber, one can upgrade $\Flex$ to an embedding between the local Shimura varieties with deeper levels.
\end{rmk}

\subsection{The Stack of Isocrystals and Local Shtukas}

Let $E$ be an unramified extension of $\BQ_p$ with ring of integers $\CO_E$ and residue field $k$. Let $\CG$ be a smooth group defined over $\CO_E$ with the generic fiber $G$. In what follows, we work within the category $\Perf_k$ of perfect affine $k$-algebras.

We denote the Witt-vector loop group $L^{\textup{witt}}G$ and the positive loop group $L^{+,\textup{witt}}\CG$ as the functors sending a perfect $k$-algebra $R$ to $G(W(R)[1/p])$ and $\CG(W(R))$, respectively. Let $\sigma$ denote the Frobenius automorphism acting on the Witt vectors.

\begin{dfn}\label{dfn:Isoc_G}
    The \emph{stack of $G$-isocrystals}, denoted by $\Isoc_G$, is defined as the étale quotient stack:
    \[
    \Isoc_G := L^{\textup{witt}}G / \Ad_\sigma L^{\textup{witt}}G,
    \]
    where $\Ad_\sigma$ denotes the $\sigma$-twisted conjugation action. As a functor of points, for $R \in \Perf_k$, the groupoid $\Isoc_G(R)$ classifies pairs $(\mathcal{E}, \phi_{\mathcal{E}})$, where $\mathcal{E}$ is a $G$-torsor on $\Spec W(R)[\frac{1}{p}]$ and $\phi_{\mathcal{E}}: \sigma^* \mathcal{E} \xrightarrow{\sim} \mathcal{E}$ is an isomorphism.
\end{dfn}

By \cite{gleason2023meromorphic} there exists an isomorphism between the underlying reduced stack of $\Bun_G$ and the stack of isocrystals:
\[
\Bun_G^{\mathrm{red}} \xrightarrow{\sim} \Isoc_G.
\]

We now turn to the integral structure.

\begin{dfn}\label{dfn:Sht_loc}
    The \emph{moduli stack of local shtukas} (in the Witt vector formulation), denoted by $\Sht_\CG^{\mathrm{loc}}$, is defined as the étale quotient stack:
    \[
    \Sht_\CG^{\mathrm{loc}} := L^{\textup{witt}}G / \Ad_\sigma L^{+,\textup{witt}}\CG.
    \]
    The groupoid of points classifies pairs $(\mathcal{E}, \phi_{\mathcal{E}})$ where $\mathcal{E}$ is a $\CG$-torsor on $\Spec(W(R))$ and $\phi_{\mathcal{E}}$ is a meromorphic isomorphism of the associated $G$-torsors over $\Spec(W(R)[\frac{1}{p}])$.
\end{dfn}

By \cite[Lemma 3.1.7]{daniels2024Igusa}, there is an isomorphism between the reduced locus of the shtuka stack and the Witt vector stack defined above:
\[
\Sht_{\CG}^{\mathrm{red}} \xrightarrow{\sim} \Sht_\CG^{\mathrm{loc}}.
\]

There is a natural morphism connecting the integral and rational structures, known as the Newton morphism:
\[
\Nt: \Sht_\CG^{\mathrm{loc}} \longrightarrow \Isoc_\CG.
\]

The points of the stack $\Isoc_G$ are classified by the set of $\sigma$-conjugacy classes $B(G)$. For each element $b \in B(G)$, we have the corresponding Newton stratum, realized as a locally closed substack immersion:
\[
i_b: \Isoc_{G,b} \hookrightarrow \Isoc_G.
\]
Furthermore, the category of sheaves on a specific Newton stratum admits a representation-theoretic description. There is an equivalence:
\[
\Shv(\Isoc_{G,b}) \simeq \Rep(G_b(\BQ_p), \Lambda),
\]
where $G_b$ is the $\sigma$-centralizer group of $b$, and $\Lambda$ is the coefficient ring.

 \begin{pro}\label{existanceoflattice}\cite[Proposition 3.2.3]{daniels2024Igusa}
The map $\Sht_{G,\mu}\to \Isoc_{G,\mu^{-1}}$ is v-surjective.
\end{pro}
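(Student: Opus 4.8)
The morphism in question is the restriction to the $\mu$-bounded substack of the Newton map $\Nt\colon \Sht_{\CG}^{\mathrm{loc}}\to\Isoc_{\CG}$, with the convention that $\Isoc_{G,\mu^{-1}}$ denotes the union $\coprod_b \Isoc_{G,b}$ over those $b$ that can occur as the isocrystal of the $G$-bundle $\CE_0$ remembered by the period map (this matches $\Bun_{G,\mu^{-1}}^{\red}$ under the isomorphism $\Bun_G^{\red}\xrightarrow{\sim}\Isoc_G$). Since surjectivity of a map of small $v$-stacks onto a target of this shape can be tested on geometric points, it suffices to show that for every algebraically closed field $C$ over $k$ and every $C$-point of $\Isoc_{G,\mu^{-1}}$, the fibre product with $\Sht_{G,\mu}$ is non-empty (as a $v$-stack). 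The plan is then: (i) reduce such a test point to the canonical point of a single Newton stratum; (ii) identify the fibre over that point with an affine Deligne--Lusztig variety, i.e. (a twist of) an integral local Shimura variety; and (iii) invoke the non-emptiness theorem for the latter.

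\emph{Step 1: reduction to the canonical point.} Each Newton stratum is a neutral gerbe: the equivalence $\Shv(\Isoc_{G,b})\simeq\Rep(G_b(\BQ_p),\Lambda)$ recalled above exhibits $\Isoc_{G,b}\simeq[\Spec k/\underline{G_b(\BQ_p)}]$ (more precisely $[\Spec k/\tilde G_b]$, but the unipotent part does not affect surjectivity). Moreover $G$-isocrystals over an algebraically closed field of characteristic $p$ are isotrivial: the natural map $B(G)\to B(G_C)$ is a bijection, and every $G$-isocrystal over $C$ is isomorphic to the pullback of the standard one $\CE_b$ defined over $\overline{\BF}_q$. Hence any $C$-point $\Spec C\to\Isoc_{G,b}$ is $2$-isomorphic to the composite $\Spec C\to\Spec\overline{\BF}_q\to\Isoc_{G,b}$, and therefore factors, up to isomorphism, through the canonical point $\ast\to\Isoc_{G,b}$. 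Forming fibre products reduces us to proving that $\Sht_{G,\mu}\times_{\Isoc_G}\ast$ is non-empty for each relevant $b$. By Definition~\ref{dfn:period_map} and Definition~\ref{dfn:integral_local_shimura}, this fibre is a twist of $\CM^{\mathrm{int}}_{G,\mu,b'}$ for the appropriate $b'$ (depending on $b$ through the direction of the Beauville--Laszlo modification), whose $\overline{\BF}_q$-points form the affine Deligne--Lusztig variety
\[
X_{\mu}(b')\;=\;\{\,g\in G(\breve{\BQ}_p)/\CG(W(\overline{\BF}_q))\;:\;g^{-1}b'\sigma(g)\in\CG(W(\overline{\BF}_q))\,\mu(p)\,\CG(W(\overline{\BF}_q))\,\};
\]
here we use that $\mu$ is minuscule, so that the Schubert cell $\Gr_{\CG,\mu}$ is already closed and the bound ``$\le\mu$'' coincides with ``$=\mu$''.

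\emph{Step 2: non-emptiness of the fibre.} A non-empty affine perfect $k$-scheme is faithfully flat over $\Spec k$, hence a $v$-cover of it; consequently $[X_{\mu}(b')/\underline{G_{b'}(\BQ_p)}]\to[\ast/\underline{G_{b'}(\BQ_p)}]$ is $v$-surjective as soon as $X_{\mu}(b')\neq\emptyset$. Now invoke the resolution of Rapoport's non-emptiness conjecture at hyperspecial level --- in the $v$-sheaf formulation, $\CM^{\mathrm{int}}_{G,\mu,b'}\neq\emptyset$ (equivalently $\CM_{G,\mu,b'}\neq\emptyset$, equivalently $X_\mu(b')\neq\emptyset$) if and only if $b'\in B(G,\mu)$; see~\cite{scholze2020berkeley}. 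Our normalization of $\Isoc_{G,\mu^{-1}}$ is exactly the set of $b$ for which the corresponding $b'$ lies in $B(G,\mu)$, so every stratum of the target is hit; the ``only if'' half of the same input, together with the closedness of $B(G,\mu)$ under the dominance order on $B(G)$, shows conversely that $\Nt$ carries $\Sht_{G,\mu}$ into $\Isoc_{G,\mu^{-1}}$, so the morphism in the statement is well defined. Combining this with Steps 1--2 and the geometric-point criterion for $v$-surjectivity yields the proposition.

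\emph{Main obstacle.} Essentially all the depth is outsourced to the non-emptiness theorem for affine Deligne--Lusztig varieties, which we cite. What remains is bookkeeping, and the part I expect to be most delicate is matching the subscript $\mu^{-1}$ on $\Isoc_G$ and $\Bun_G$ with the bound $\mu$ on the Shtuka and with the direction of the Beauville--Laszlo modification --- i.e. tracking which of $\CE_0$ and $\CE_\infty$ the period map remembers, and hence which $\sigma$-conjugacy class $b'$ governs the fibre over a given stratum. Once a consistent convention is fixed this is routine, as are the remaining verifications (the gerbe presentation of Newton strata, isotriviality of isocrystals, faithful flatness of non-empty affine schemes).
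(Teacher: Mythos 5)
The arithmetic core of your argument is the right one, and it is indeed the input behind the cited result (whence the label ``existence of lattice''): over a geometric point every $G$-isocrystal is isomorphic to a standard $b$, the fibre of the period map over the canonical point of a stratum is (a twist of) $\CM^{\mathrm{int}}_{G,\mu,b}$ whose $\overline{\BF}_q$-points form the affine Deligne--Lusztig variety $X_\mu(b)$, and non-emptiness for $b\in B(G,\mu)$ (Kottwitz--Rapoport, Gashi, He) produces a point. The gap is the step you treat as formal: $v$-surjectivity of a map of small $v$-stacks cannot be tested on geometric points. For example $\Spd\BQ_p\sqcup\Spd\BF_p\to\Spd\BZ_p$ is surjective on points valued in algebraically closed perfectoid fields yet is not a $v$-cover, because a $v$-cover cannot separate an open locus from its closed complement. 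This failure hits your reduction twice. First, the Newton strata $\Isoc_{G,b}$ are only locally closed in $\Isoc_{G,\mu^{-1}}$, so a general test object $S\to\Isoc_{G,\mu^{-1}}$ (whose Newton point may jump) does not factor $v$-locally through a single stratum; surjectivity of $\Sht_{G,\mu}\times_{\Isoc_G}\Isoc_{G,b}\to\Isoc_{G,b}$ for every $b$ does not imply surjectivity onto the union (compare $\coprod_b\Isoc_{G,b}\to\Isoc_{G,\mu^{-1}}$, surjective on geometric points but not a $v$-cover). Second, even over a fixed stratum, factoring a map from a product of points through the canonical point requires trivializing the isocrystal, and then choosing lattices, uniformly in all factors, i.e.\ with uniformly bounded denominators in $G(W(R^+)[1/p])$; pointwise isomorphisms and pointwise ADLV points do not automatically assemble. (A minor slip: on the Witt-vector side the band of the stratum is $\underline{G_b(\BQ_p)}$; the extension $\tilde G_b$ by a unipotent group is the automorphism group of $\CE_b$ on the Fargues--Fontaine curve, not of the isocrystal.)

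This is exactly where the content of the cited proposition of \cite{daniels2024Igusa} lies: one reduces to test objects which are products of points (every affinoid perfectoid admits such a $v$-cover) and then constructs a $\mu$-bounded $\CG$-lattice in the given isocrystal simultaneously in all factors, with the factors possibly lying in different Newton strata; the ADLV non-emptiness enters there, supplemented by a uniformity/boundedness argument. Note that the present paper also consumes the proposition precisely in this form, via Lemma \ref{descriptionofpoints} about products of points. So your Steps 1--2 establish surjectivity on geometric points stratum by stratum, which is strictly weaker than the assertion; to close the gap you would need either a proved criterion reducing $v$-surjectivity to such pointwise data for this particular map, or a uniform lattice construction over products of points --- neither of which is ``bookkeeping''.
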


\subsection{Fargues-Scholze local langlands}
\begin{dfn}\label{dfn:LocSys}
Let $E$ be a non-archimedean local field with Weil group $W_E$, and let $\Ghat$ be the Langlands dual group of a reductive group $G$ over $E$. The \textbf{stack of L-parameters}, or \(\Ghat\)-local systems, is the quotient stack
\[
\mathrm{LocSys}_{\Ghat} := [Z^1(W_E, \Ghat) / \Ghat]
\]
where $Z^1(W_E, \Ghat)$ is the space of continuous 1-cocycles (L-parameters) $\phi: W_E \to \Ghat$, and $\Ghat$ acts by conjugation. This is an algebraic stack over $\BZ_\ell$.
\end{dfn} 

Fargues and Scholze defined the full subcategories $D^{\mathrm{ULA}}(\Bun_G) \subset D(\Bun_G)$ and $D^{\mathrm{ULA}}(\Hck_G) \subset D(\Hck_G)$ consisting of universally locally acyclic sheaves. They further endow the $D(\Bun_G)$ and $D(\Hck_G)$, with perverse $t$-structures.

Let $V$ be a representation of the Langlands dual group ${}^L G$. The geometric Satake equivalence produces a ULA perverse sheaf $\mathcal{S}_V$ on the Hecke stack $\Hck_G$ associated to $V$. Consider the standard correspondence diagram

\[
\begin{tikzcd}[row sep=large, column sep=huge]
 & \Hck_G \arrow[dl, "h_1"'] \arrow[dr, "{(h_2,\,x)}"] & \\
 \Bun_G & & \Bun_G \times \Div_{\CY}
\end{tikzcd}
\]

The \emph{Hecke operator} associated to $V$, denoted $T_V$, is defined as the functor
\[
T_V: D(\Bun_G) \longrightarrow D(\Bun_G)\times \Div, \quad \mathcal{F} \longmapsto Rh_{2!}(h_1^*\mathcal{F} \otimes^{\mathbb{L}} \mathcal{S}_V),
\]
One can see that this operators factor through $D(\Bun_G)^{\BBW}$, consisting of $W$-equivariant objects in $D(\Bun_G)$.Let $\mu$ be a dominant cocharacter of $G$ and let $V_\mu$ be the highest weight representation associated to $\mu$; we define the Hecke operator $T_\mu$ as $T_{V_\mu}$.

One can upgrade the Hecke operators to the spectral action:

\begin{dfn}\label{dfn:spectral_action}
Let $\Bun_G$ denote the moduli stack of $G$-bundles on the Fargues–Fontaine curve, 
and consider the derived category $D(\Bun_G, \lambda)$.
Let $\Perf(\mathrm{LocSys}_{\Ghat})$ be the derived category of perfect complexes on the moduli stack 
$\mathrm{LocSys}_{\Ghat}$ of $\Lambda$-valued Langlands parameters, as in \cite{fargues2021geometrization}.
Then there exists a monoidal $\Lambda$-linear action
\[
\Perf(\mathrm{LocSys}_{\Ghat}) \longrightarrow \End(D(\Bun_G, \Lambda))^{\BBW},
\quad 
C \longmapsto \bigl(A \mapsto C \star A\bigr),
\]
called the \emph{spectral action}.
For $V \in \Rep_{\Lambda}({}^L G)$, the induced endofunctor 
$C_V \star (-)$ coincides with the Hecke operator $T_V$ acting on $D(\Bun_G,\Lambda)$.
\end{dfn}

Fargues and Scholze constructed a semisimple Local Langlands correspondence, denoted $\mathrm{LLC}$, from the set of isomorphism classes of irreducible smooth representations of $G(E)$ to the set of equivalence classes of semisimple $L$-parameters.

\begin{dfn}\label{dfn:hecke_algebra}
Let $G$ be a reductive group over $\BQ_p$ which is unramified, and let 
$K_p \subset G(\BQ_p)$ be a compact open subgroup.  
The \emph{Hecke algebra} with coefficients in $\Lambda$ is defined as
\[
\mathcal{H}_{K_p} := \Lambda[K_p \backslash G(\BQ_p) / K_p],
\]

Given a maximal ideal $\mathfrak{m} \subset \mathcal{H}_{K_p^{\mathrm{hs}}}$, one associates to it
a Langlands parameter
\[
\varphi_{\mathfrak{m}} : W_{\BQ_p} \longrightarrow {}^L G(\Lambda),
\]
characterized by a semisimple element $\varphi_{\mathfrak{m}}(\mathrm{Frob}_{\BQ_p}) \in {}^L G(\BF_\ell)$.
\end{dfn}

When the subgroup $K_p$ is hyperspecial, the Satake isomorphism implies that the closed points of $\Spec(\mathcal{H}_{K_p})$ correspond to $\widehat{G}$-conjugacy classes of unramified semisimple $L$-parameters. Every such parameter factor through a maximal torus.

\begin{dfn}\label{dfn:generic_parameter}
Let $T$ be a maximal torus of $G$, and let $\varphi_T : W_{\BQ_p} \to {}^L T(\Lambda)$ 
be a toral Langlands parameter.  
For each coroot $\alpha \in X_*(T_{\BQ_p})/\Gamma$, consider the one-dimensional representation
$\alpha \circ \varphi_T$ of $W_{\BQ_p}$.  
We say that $\varphi_T$ is \emph{generic} if
\[
R\Gamma(W_{\BQ_p}, \alpha \circ \varphi_T) = 0
\quad\text{for all such $\alpha$.}
\]
Equivalently, $\varphi_T$ does not admit any coroot on which the Frobenius acts with eigenvalue $1$.  
A maximal ideal $\mathfrak{m} \subset \mathcal{H}_{K_p}$ is said to be \emph{generic}
if the associated toral parameter $\varphi_{T,\mathfrak{m}}$ is generic.
\end{dfn}

\section{meta-unitary Shimura variety}\label{meta-unitaryShimurasection}
B\"ultel~\cite{bultel2008pel} introduced a class of Shimura varieties known as \emph{meta-unitary Shimura varieties}. For certain level structures $K$, he constructed integral models for these varieties. In this section, we review the definition of meta-unitary Shimura varieties and their integral models.

\begin{setting}\label{genealsettingmetaunitary}
Let $L$ be a CM field with maximal totally real subfield $L^+$, and let $x \mapsto \bar{x}$ denote the generator of $\mathrm{Gal}(L/L^+)$. Let $R$ be the splitting field of $L$ and $R^+$ the splitting field of $L^+$. Denote by $L^{\mathrm{an}}$ the set of embeddings of $L$ into $\mathbb{C}$. Fix an element $\sigma \in \mathrm{Gal}(R/\mathbb{Q})$. Assume that $(V,\Psi)$ is a vector space over $L$ equipped with a skew-Hermitian form. Let $U/L^+$ be the algebraic group of $\Psi$-similarities, and set
\[
\GU/\mathbb{Q} \;=\; G_m \times \mathrm{Res}_{L^+/\mathbb{Q}} U.
\]
\end{setting}

\begin{dfn}\label{skwehermitianhodgestructure}
A \emph{skew-Hermitian rational Hodge structure of weight $-1$} on $V$ is a map
$$h : \mathrm{Res}_{\mathbb{C}/\mathbb{R}} G_m \longrightarrow \GU_{\mathbb{R}}$$
such that the symmetric form $\mathrm{tr}_{L/\mathbb{Q}} \Psi\!\left(h(i)x, y\right)$ is positive definite on $\mathbb{R} \otimes_{\mathbb{Q}} V$.
\end{dfn}

Note that $V \otimes_L \mathbb{C} = \bigoplus_{\iota \in L^{\mathrm{an}}} V_\iota$. A skew-Hermitian rational Hodge structure on $V$ determines a double grading
\[
V_{\iota} = \bigoplus_{p+q=-1} V_{\iota}^{p,q}, \qquad \iota \in L^{\mathrm{an}},
\]
satisfying
\[
\overline{V_{\iota}^{p,q}} = V_{\bar{\iota}}^{q,p}.
\]
From now on, fix a Hermitian rational Hodge structure of weight $-1$ on $V$.

\begin{dfn}
A \emph{cycle} $S$ is an orbit of $L^{\mathrm{an}}$ under the action of $\sigma$. A cycle is called \emph{inert} if $S=\bar{S}$; otherwise it is called \emph{split}. A \emph{semi-cycle} is a subset $T \subset L^{\mathrm{an}}$ such that $T \cap \bar{T} = \varnothing$ and $T \cup \bar{T}=L^{\mathrm{an}}$.
\end{dfn}

\begin{setting}\label{hodgestructureofgaugetype}
Let $S$ be a cycle. For $\iota \in S$, assume that $V_{\iota}^{p,q}=0$ whenever $p \notin [p_{0,\iota},\, p_{1,\iota}]$. Let $\Omega \subset S$ be the subset of $\iota$ such that $p_{0,\iota}=p_{1,\iota}$, and let $\Gamma \subset S$ be its complement. Choose a bijection
\[
J_S : S \longrightarrow \{1,2,\dots,|S|\}
\]
such that $J_S(\sigma(\iota)) = J_S(\iota)+1 \bmod |S|$. Enumerate the elements of $\Gamma$ as $\gamma_1,\dots,\gamma_k$ so that
\[
J_S(\gamma_1) < J_S(\gamma_2) < \cdots < J_S(\gamma_k).
\]
For each interval $(\gamma_i,\gamma_{i+1}]$ (interpreted cyclically modulo $|S|$), define, for $w\in (\gamma_i,\gamma_{i+1}]$,
\[
t^+(w) := J_S(w) - J_S(\gamma_i), \qquad r_i^+ := J_S(\gamma_{i+1}) - J_S(\gamma_i).
\]

Define a new double grading on $\tilde{V}_S = V_S$ by setting, for $w \in S$,
\[
\tilde{V}_w^{-1,0} = \bigoplus_{m < p_{0,\gamma_i} + t^+(w)} V_{\gamma_i}^{m,n}, \qquad
\tilde{V}_w^{0,-1} = \bigoplus_{m \ge p_{0,\gamma_i} + t^+(w)} V_{\gamma_i}^{m,n},
\]
and $\tilde{V}_w^{p,q}=0$ for all other $(p,q)$.

We say that $J_S$ is \emph{admissible} with respect to $V_S=\bigoplus_{\iota\in S}V_{\iota}$ if, for every $\iota_i\in S$, one has
\[
r_i^+ \;\ge\; p_{1,\iota_i}-p_{0,\iota_i}.
\]
A collection $\{J_S\}_{S \text{ a cycle}}$ is called \emph{admissible} if each $J_S$ is admissible and, in addition,
\[
J_S = \overline{J_{\bar{S}}} \quad \text{for split cycles } S,
\]
and
\[
J_S(\iota) = -J_S(\bar{\iota}) \bmod S \quad \text{for inert cycles } S.
\]

We say that $\{J_S\}_S$ is of \emph{gauge type} if, moreover, for every semi-cycle $T \subset S$,
\[
\bigl|\{w \in [\gamma_i,\gamma_{i+1}) \mid \gamma_{i+1} \notin T\}\bigr| \equiv \sum_{w\in T} p_{0,w} \pmod{2},
\]
and, for every cycle $S$,
\[
\sum_{w\in S} p_{0,w} = 0.
\]

From now on, fix a collection of admissible bijections of gauge type.
\end{setting}

\begin{lem}[{\cite[Lemma 8.2]{bultel2008pel}}]\label{isomodv}
One can upgrade $\tilde{V}$ to a rational Hermitian Hodge structure $(\tilde{V},\tilde{\Psi},\tilde{h})$ such that, for every finite place $v$ of $L$ with $\bar{v}=v$, there exists an isomorphism
\[
(V_v,\Psi_v) \;\xrightarrow{\;\sim\;} (\tilde{V}_v,\tilde{\Psi}_v).
\]
\end{lem}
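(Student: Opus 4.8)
The plan is to realize this as the Hasse principle for skew-Hermitian forms over the CM extension $L/L^+$ together with the standard construction of a polarization of a prescribed $L$-linear rational Hodge structure, the \emph{gauge type} conditions entering at exactly one place: a single $\BZ/2$-obstruction. First I would note that $\tilde h$ is well defined — the compatibilities built into the admissibility of $\{J_S\}$, namely $J_S=\overline{J_{\bar S}}$ for split cycles and $J_S(\iota)=-J_S(\bar\iota)$ for inert ones, are exactly what force the new bigrading of Setting~\ref{hodgestructureofgaugetype} to satisfy $\overline{\tilde V_w^{p,q}}=\tilde V_{\bar w}^{q,p}$, so it descends to a weight $-1$ rational Hodge structure $\tilde h$ on the $L$-vector space $V$ with Hodge types in $\{(-1,0),(0,-1)\}$. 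The remaining task is to build a skew-Hermitian form $\tilde\Psi$ on $V$ over $L$ polarizing $\tilde h$ and isomorphic to $\Psi$ locally at every finite $v$ with $\bar v=v$; at the split finite places the local skew-Hermitian form of rank $n=\dim_L V$ is unique, so the isomorphism there is automatic, and the statement is sharp since it must fail at the archimedean places.

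Next I would prescribe the local behavior of $\tilde\Psi$, measured against $\Psi$ as a base point. For skew-Hermitian forms over $L_v/L^+_{v_0}$ the isomorphism class is determined by $n$ together with a discriminant ($\{\pm1\}$-valued, via local class field theory) at the finite nonsplit places and the signature of $i\Psi_v$ at the archimedean places; the Hasse principle says a prescribed family of such local classes comes from a global form iff the product $\prod_v\varepsilon_v=1$ of the corresponding signs $\varepsilon_v\in\{\pm1\}$ (relative to $\Psi_v$) holds. I demand $\tilde\Psi_v\cong\Psi_v$ at every finite place — automatic at the split ones — so $\varepsilon_v=+1$ for all finite $v$; and at the archimedean place attached to $\iota$ I demand the unique signature for which $\mathrm{tr}_{L/\BQ}\tilde\Psi(\tilde h(i)\,\cdot\,,\,\cdot\,)$ is positive, which is forced — $\tilde V_\iota$ lives in types $(-1,0),(0,-1)$, so $\tilde h(i)$ is an honest complex structure and only one signature works — with negative index $\tilde q_\iota=\dim\tilde V_\iota^{0,-1}$. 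There $\varepsilon_\iota=(-1)^{\tilde q_\iota-q^{\Psi}_\iota}$, with $q^{\Psi}_\iota=\sum_{p\equiv c\,(2)}\dim V_\iota^{p,-1-p}$ the negative index of $i\Psi$ at $\iota$ dictated by positivity of $h$ (for a fixed parity class $c$). Hence the obstruction reduces to the single parity congruence
\[
\sum_{\iota\in T}\bigl(\tilde q_\iota-q^{\Psi}_\iota\bigr)\equiv 0\pmod 2,
\]
$T$ any semi-cycle of $L^{\mathrm{an}}$; the summand is anti-symmetric in $\iota\mapsto\bar\iota$, so this is well defined independently of $T$.

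The crux, and the step I expect to be the genuine obstacle, is deducing this congruence from the gauge-type hypotheses; here the combinatorics of Setting~\ref{hodgestructureofgaugetype} is used. Running the block decomposition of each cycle $S$ into the intervals $(\gamma_i,\gamma_{i+1}]$ and using $t^+(w)\in\{1,\dots,r_i^+\}$ together with the admissibility inequality $r_i^+\ge p_{1,\gamma_i}-p_{0,\gamma_i}$, a direct count gives
\[
\sum_{w\in(\gamma_i,\gamma_{i+1}]}\dim\tilde V_w^{0,-1}=\sum_{m\ge p_{0,\gamma_i}}(m-p_{0,\gamma_i})\,\dim V_{\gamma_i}^{m,-1-m},
\]
whose reduction modulo $2$ is $\sum_{m\equiv p_{0,\gamma_i}+1\,(2)}\dim V_{\gamma_i}^{m,-1-m}$. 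Summing over the blocks of $S$, subtracting $\sum_{w\in S}q^{\Psi}_w$, and recombining the contributions of $\Gamma\cap S$ and $\Omega\cap S$, the $S$-part of the congruence reduces to an expression involving only the parities of the $p_{0,w}$, the rank $n$, and the cardinalities $\bigl|\{w\in[\gamma_i,\gamma_{i+1}):\gamma_{i+1}\notin T\}\bigr|$; at that point the semi-cycle gauge condition $\bigl|\{w\in[\gamma_i,\gamma_{i+1}):\gamma_{i+1}\notin T\}\bigr|\equiv\sum_{w\in T}p_{0,w}\pmod 2$ accounts for the inert cycles and $\sum_{w\in S}p_{0,w}=0$ for the split ones, so summed over all cycles the congruence holds. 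Keeping the endpoint bookkeeping of the blocks and the fixed parity class $c$ straight is where the care lies — B\"ultel's notion of ``gauge type'' is reverse-engineered for exactly this identity.

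Finally I would conclude. Once the parity congruence holds, the usual polarization argument — the Hasse principle supplies a skew-Hermitian form with the prescribed local classes, and a rational change of coordinates moves it into the non-empty open polarization cone of $\tilde h$ at each archimedean place — produces a skew-Hermitian $\tilde\Psi$ on $V$ over $L$ polarizing $\tilde h$ and locally isomorphic to $\Psi$ at every finite place. Then $(\tilde V,\tilde\Psi,\tilde h)$ is a polarized $L$-Hodge structure, positivity of $\mathrm{tr}_{L/\BQ}\tilde\Psi(\tilde h(i)\,\cdot\,,\,\cdot\,)$ being equivalent to the prescribed signatures since $\tilde h(i)$ is an honest complex structure; $\tilde h$ takes values in the similitude group $\widetilde{\GU}$ of $\tilde\Psi$, an inner form of $\GU$; and by construction $(V_v,\Psi_v)\cong(\tilde V_v,\tilde\Psi_v)$ for every finite place $v$ with $\bar v=v$. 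Everything outside the parity computation is the standard machinery for constructing polarizations of $L$-linear Hodge structures, as in Deligne's and Kottwitz's treatment of PEL data.
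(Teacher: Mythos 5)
The paper does not reprove this statement---it is quoted directly from B\"ultel's Lemma 8.2---and your argument is the expected one behind that citation: reduce to the local--global classification (Landherr/Hasse principle) of skew-Hermitian forms over $L/L^+$, with the local class pinned to that of $\Psi$ at every finite place (automatic at split places) and forced by polarization positivity at the archimedean ones, so that the only obstruction is a single parity condition which the gauge-type congruences are designed to satisfy. This is sound, and your block count $\sum_{w\in(\gamma_i,\gamma_{i+1}]}\dim\tilde V_w^{0,-1}=\sum_{m\ge p_{0,\gamma_i}}(m-p_{0,\gamma_i})\dim V_{\gamma_i}^{m,-1-m}$ is correct given admissibility; the one place you stop short is the final recombination over $\Omega$, $\Gamma$ and the split/inert dichotomy establishing the parity identity, which you assert rather than carry out and which is precisely the computational content of B\"ultel's lemma.
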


\begin{rmk}\label{moregenerality}
The setup of \cite{bultel2008pel} is in fact slightly more general than that of \ref{hodgestructureofgaugetype}. For simplicity, we work under stronger hypotheses, but all arguments extend to B\"ultel's original setting as well.
\end{rmk}

\begin{rmk}\label{givepadicsparse}
Assume $v$ is a place of $L$ with $\bar{v}=v$, and let $\sigma$ be a Frobenius lift with respect to $v$. Denote by $\mu_S$ the cocharacter obtained from the double grading on $V_S$, and let $J_S$ be an admissible bijection. Then $(V_S,\mu_S)$ is a sparse display datum.
\end{rmk}
\begin{dfn}\label{gaugetyperepresentation}
Let $G_0$ be a reductive group over $L$, and set $G = G_m \times \mathrm{Res}_{L/\mathbb{Q}} G_0$. Let 
\[
h : \mathrm{Res}_{\mathbb{C}/\mathbb{R}} G_m \longrightarrow G
\]
be a morphism. Let $\rho_i : G_0 \to \GU(V_i,\Psi_i)$ for $i=1,2,3$ be three skew-Hermitian representations of $G_0$, and denote
\[
\bigotimes_{i=1}^3 (V_i,\Psi_i,\rho_i)
\quad\text{by}\quad
(V_{123},\Psi_{123},\rho_{123}).
\]
We say that $(\rho_1,\rho_2,\rho_3)$ is of \emph{gauge type} if:

\begin{enumerate}
    \item For each $i\in\{1,2,3\}$, the skew-Hermitian Hodge structure $(V_i,\Psi_i,h_i)$ is of Hodge type, where $h_i = \rho_i \circ h$.
    
    \item There exist involution algebras $R_1,R_2,R_3,R_{123}$ that are free of finite rank over $L$, together with an action of $R_\pi$ on $(V_\pi,\Psi_\pi)$ for each 
    \[
    \pi\in\{1,2,3,\{1,2,3\}\},
    \]
    such that $G_0$ is the stabilizer of these actions inside $\prod \GU(V_\pi,\Psi_\pi)$.
    
    \item Each cycle $S$ contains an element $\tau$ such that, for each $i$, all but at most one of the Hodge numbers $\tilde{h}_{i,\tau}^{p,q}$ vanish.
    
    \item For every cycle $S$ and $1\le i\le 3$, let $(V_{i,S,v},\mu_{i,S,v})$ denote the associated sparse display datum. Then, for each $i$, there exists some $\tau \in S$ such that
    \[
    \mu_{i,S,v}^\tau = (1,1,\dots,1) 
    \quad\text{or}\quad
    (z,z,\dots,z).
    \]
\end{enumerate}
\end{dfn}

We now explain the role of these conditions:

\begin{rmk}
Condition (1) is the key hypothesis, since it ensures the existence of the Hodge-type Shimura data $(\GU(\tilde{V}_i,\Psi(V_i)),\tilde{h}_i)$. The reason for considering three representations and their tensor product, together with Conditions (2) and (3), rather than working with a single representation, is that we seek to remain within a PEL-type framework while still obtaining interesting examples that satisfy Condition (1). Condition (3) guarantees that $\GU(V_{123},\tilde{h}_{123})$ is also of Hodge type. Condition (4) is a useful technical assumption ensuring that all the displays are adjoint-nilpotent.
\end{rmk}

\begin{dfn}\label{meta-unitary Shimura data}
A \emph{meta-unitary Shimura datum} consists of
\[
(G,h,\{\rho_i,\epsilon_i,\tilde{\Psi}_i\}_{1\le i\le 3})
\]
such that:
\begin{enumerate}
    \item $\rho_1,\rho_2,\rho_3$ are skew-Hermitian representations of $G_0$ of gauge type.
    
    \item The skew-Hermitian forms $\tilde{\Psi}_i$ on $\tilde{V}_i$ are such that $(\tilde{V}_i,\tilde{\Psi}_i,\tilde{h}_i)$ are skew-Hermitian Hodge structures.
    
    \item $\epsilon_i : \mathbb{A}_{\mathbb{Q}}^\infty \otimes V_i \to \mathbb{A}_{\mathbb{Q}}^\infty \otimes \tilde{V}_i$ is an isomorphism.
\end{enumerate}

We say that a meta-unitary Shimura datum is \emph{unramified} if, in addition, there exists a self-dual $O_{L,p}$-lattice $B_i \subset V_i$ such that
\[
U(B_i,\psi_i)
\quad\text{is hyperspecial inside}\quad
U(V_i,\psi_i).
\]
\end{dfn}

\begin{example}\label{example}
Let $(G,\mu)$ be a Shimura datum that splits over $\mathbb{Q}_{p^f}$, such that $G^{\mathrm{der}}$ is simply connected, $G^{\mathrm{ad}}$ is simple and quasi-split over $\mathbb{Q}_p$, and $G$ is either of type $B_\ell$ or $C_\ell$ with $G^{\mathrm{ad}}\otimes \mathbb{R}$ having more than three times as many compact simple real factors as non-compact ones, or of type $E_7$ with $G^{\mathrm{ad}}\otimes \mathbb{R}$ having more than four times as many compact factors as non-compact ones. Then Appendix~D of \cite{bultel2008pel} constructs an explicit meta-unitary Shimura datum for $(G,\mu)$. Some of these examples are of abelian type, while others include examples of type $E_7$ and type $D$.
\end{example}

From now on, fix a level structure $K^p$ away from $p$.

\begin{dfn}\label{PEL-typemoduliproblem}
For every connected $\mathbb{F}_q$-scheme $S$ with a fixed geometric base point $s_0$,  
the set $\mathcal{S}^{\textup{tan}}_{K^p}(S)$ consists of the data:
\begin{itemize}
    \item[(1)] A polarized abelian scheme 
    \[
      (Y, \lambda, \eta, \iota)
    \]
    over $S$, equipped with
    \begin{itemize}
        \item a prime-to-$p$ level structure $\eta$ of type $K^p$,
        \item an action $\iota: \mathcal{O}_L^3 \to \End_S(Y)$,
        \item and a polarization $\lambda$
    \end{itemize}
    satisfying the usual PEL-type conditions.
    
    \item[(2)] For each $i \in \{1,2,3\}$, an action
    \[
      y_i : R_i \longrightarrow \End_S(Y_i), \qquad
      \text{where } Y_i := e_i \cdot Y
    \]
    and $e_1, e_2, e_3$ are the standard idempotents of $\mathcal{O}_L^3$.
    These actions are required to satisfy the determinant condition relative to the given $\mathcal{O}_L$-structure.
    
    \item[(3)] Let $Y_{123}$ be the tensor product of $Y_1,Y_2,Y_3$. An action of $R_{123}$ on 
    $Y_{123}$
    such that the induced action of $R_{123}$ on $Y_{123}$ is compatible with its natural action on the prime-to-$p$ adelic Tate module.  
    More precisely, the diagram
    \[
    \begin{tikzcd}
    \mathbb{A}^{\infty,p}\! \otimes V_{123} \arrow{r}{\eta} &
    \displaystyle\bigotimes_{i=1}^3 H^1_{\textup{\'et}}(Y_{i,s_0}, \mathbb{A}^{\infty,p}) \arrow{r} &
    H^1_{\textup{\'et}}(Y_{123,s_0}, \mathbb{A}^{\infty,p})
    \end{tikzcd}
    \]
    commutes with the action of $R_{123}$.
\end{itemize}
\end{dfn}
\begin{rmk}\label{represntailityofPELmodulaiproblem}
The moduli problem $\mathcal{S}^{\textup{tan}}_{K^p}$ is representable by a closed subscheme of the special fiber of the moduli space of abelian varieties with $\CO_L^3$–action, polarization, and level structure i.e the PEL-type Shimura variety $\Sh(\tilde{G},\mu^\triangleright)$ . In particular, $\mathcal{S}^{\textup{tan}}_{K^p}$ is projective.
\end{rmk}

B\"ultel constructed a candidate for the special fiber of $(G,\mu)$ as follows:

\begin{dfn}\label{specialfiber}
Let $\mathcal{S}^{\mathrm{nr}}_{F_q}$ be the scheme making the following diagram commute:
\[
\begin{tikzcd}\label{diagramreducedfiber}
\bar{\mathcal{S}}_{K^p}^{\mathrm{nr}} \arrow{r} \arrow{d} &
\bar{\CB}_{G,\mu} \arrow{d}{\Flex^{\textup{tan}}} \\
\mathcal{S}^{\textup{tan}}_{K^p} \arrow{r} & \bar{\CB}_{\tilde{G},\mu^\triangleright}^{\textup{tan}}
\end{tikzcd}
\]
We denote by $\bar{\mathcal{S}}_{K^p}$ the underlying reduced subscheme of $\mathcal{S}^{\mathrm{nr}}_{K^p}$.
\end{dfn}

\begin{lem}\label{smoothness}
The scheme $\mathcal{S}_{K^p}$ is smooth.
\end{lem}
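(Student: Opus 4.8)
The plan is to deduce smoothness of $\mathcal{S}_{K^p}$ from the Cartesian diagram in Definition~\ref{specialfiber} together with the infinitesimal lifting criterion, reducing everything to known smoothness statements for the stacks of displays. First I would recall that $\mathcal{S}^{\mathrm{nr}}_{K^p}$ is defined as the fiber product of $\mathcal{S}^{\textup{tan}}_{K^p}$ with $\bar{\CB}_{G,\mu}$ over $\bar{\CB}^{\textup{tan}}_{\tilde{G},\mu^{\triangleright}}$, and that $\bar{\mathcal{S}}_{K^p}$ is its reduced subscheme; so the first task is to show that $\mathcal{S}^{\mathrm{nr}}_{K^p}$ is already reduced, or at least that its reduction is smooth, which will follow once we show the fiber product is formally smooth over $\mathbb{F}_q$. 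The key geometric input is that the stack of $(G,\mu)$-displays $\bar{\CB}_{G,\mu}$ is smooth over $\mathbb{F}_q$ (this is the display-theoretic analogue of the smoothness of local models, and is exactly what makes B\"ultel's construction produce a nice integral model), and likewise for $\bar{\CB}^{\textup{tan}}_{\tilde{G},\mu^{\triangleright}}$; moreover the morphism $\Flex^{\textup{tan}}$ is (formally) smooth, since by Proposition~\ref{pro:flex_isogeny} and Theorem~\ref{thm:flex_immersion} the map $\Flex$ is a closed immersion on local Shimura varieties but on the ambient display stacks it is an inclusion of a smooth substack cut out by the tensor (involution-algebra) conditions, and those conditions define a smooth closed substack because $G_0$ was built precisely as the stabilizer of those tensors.

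The main steps, in order, would be: (i) show the $\Psi$-unitary PEL moduli scheme $\mathcal{S}^{\textup{tan}}_{K^p}$ (the Siegel-type ambient from Remark~\ref{represntailityofPELmodulaiproblem}) maps smoothly, or with controlled obstruction, to $\bar{\CB}^{\textup{tan}}_{\tilde{G},\mu^{\triangleright}}$ via the crystalline/display period morphism --- this is where Grothendieck--Messing theory enters, identifying deformations of the abelian scheme with deformations of its display; (ii) show $\Flex^{\textup{tan}}\colon \bar{\CB}_{G,\mu}\to \bar{\CB}^{\textup{tan}}_{\tilde{G},\mu^{\triangleright}}$ is formally smooth, by checking the lifting criterion against square-zero extensions and using that the sparseness hypothesis (Remark~\ref{givepadicsparse}, Definition~\ref{gaugetyperepresentation}(4)) makes the relevant displays adjoint-nilpotent, so their deformation theory is unobstructed; (iii) conclude that the fiber product $\mathcal{S}^{\mathrm{nr}}_{K^p}$ is formally smooth over $\mathbb{F}_q$ as a base change of a smooth morphism; (iv) since a formally smooth, locally of finite type $\mathbb{F}_q$-scheme is smooth and in particular reduced, deduce $\mathcal{S}^{\mathrm{nr}}_{K^p}=\bar{\mathcal{S}}_{K^p}=\mathcal{S}_{K^p}$ is smooth. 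Here I would use representability (Remark~\ref{represntailityofPELmodulaiproblem}) to guarantee we are dealing with an honest scheme locally of finite type, so that formal smoothness upgrades to smoothness.

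The hard part will be step (ii): proving that $\Flex^{\textup{tan}}$ is formally smooth rather than merely a closed immersion. On the nose $\Flex$ changes the group and the cocharacter (replacing $\mu$ by its "flex" $\mu^{\triangleright}$), and a priori the image could be singular inside the target display stack. The resolution is that the tensor/involution-algebra data of Definition~\ref{gaugetyperepresentation}(2) cut out $\bar{\CB}_{G,\mu}$ inside $\bar{\CB}^{\textup{tan}}_{\tilde G,\mu^{\triangleright}}$ as the locus where a collection of Frobenius-equivariant tensors is preserved, and because $G_0$ is \emph{reductive} (it is the stabilizer group of those tensors) the corresponding substack of the smooth ambient is itself smooth --- this is the display analogue of the classical fact that a closed subgroup that is a stabilizer of tensors in $\GL$ inherits smoothness in good situations, combined with the observation that the local model $\CM^{\mathrm{loc}}_{\CG,\mu}$ is smooth (it is a point, since $\mu$ is minuscule and $\CG$ is hyperspecial, by Remark on unramifiedness in Definition~\ref{meta-unitary Shimura data}). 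I would make this precise by a deformation-theoretic computation: given a square-zero thickening, a display for $\tilde G$ together with a lift of the $G$-structure and compatible tensors, one shows the obstruction to lifting the whole package lives in an $H^2$ that vanishes because the relevant Lie-algebra cohomology of $G_0$ (a reductive group) against the nilpotent deformation module vanishes --- here the adjoint-nilpotence from Definition~\ref{gaugetyperepresentation}(4) is exactly what forces the obstruction group to vanish. The rest (steps (i), (iii), (iv)) is then formal.
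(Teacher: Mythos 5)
Your overall frame (use the Cartesian square of Definition~\ref{specialfiber}, prove formal smoothness of the fiber product $\mathcal{S}^{\mathrm{nr}}_{K^p}$, then conclude it is reduced and equals the smooth $\bar{\mathcal{S}}_{K^p}$) is right, but the load-bearing step (ii) is wrong. The morphism $\Flex^{\textup{tan}}$ cannot be formally smooth: by Theorem~\ref{thm:flex_immersion}, and as used in the proof of Proposition~\ref{closedimmersionlocalIgusastack}, $\Flex$ is representable by a closed immersion, and a closed immersion satisfies the infinitesimal lifting criterion only when it is an open immersion. Showing that its image is a \emph{smooth} substack cut out by tensor conditions says nothing about formal smoothness of the inclusion morphism; the lifting criterion fails for any proper closed substack, smooth or not, because a square-zero deformation of the ambient display need not keep the tensors in the correct filtration step. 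Consequently your step (iii) collapses: the base change of a closed immersion is a closed immersion, so you get no smoothness of $\mathcal{S}^{\mathrm{nr}}_{K^p}$ over $\mathcal{S}^{\textup{tan}}_{K^p}$, and the fallback that $\mathcal{S}^{\textup{tan}}_{K^p}$ itself is smooth is also unavailable: by Remark~\ref{represntailityofPELmodulaiproblem} it is only a closed subscheme of a smooth PEL special fiber, cut out by the $R_{123}$-compatibility, and the whole point of B\"ultel's construction is that this naive moduli problem is not the good object. Two auxiliary claims are also off: at hyperspecial level with $\mu$ minuscule the local model is the flag variety $G/P_\mu$, not a point; and adjoint-nilpotence (gauge-type condition (4)) is what makes Zink's display theory applicable (equivalence with formal $p$-divisible groups and matching deformation theories), not a vanishing statement for an $H^2$ obstruction group of a reductive Lie algebra.

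The argument has to go around the square the other way. One shows that the crystalline/display period map $\mathcal{S}^{\textup{tan}}_{K^p}\to\bar{\CB}^{\textup{tan}}_{\tilde G,\mu^\triangleright}$ is formally \'etale: over a square-zero thickening of an $\mathbb{F}_q$-algebra, Grothendieck--Messing/Zink theory (here is where adjoint-nilpotence enters) identifies deformations of the polarized abelian scheme with its $\mathcal{O}_L^3$- and $R_\pi$-structures with deformations of the associated display together with its tensor structure, i.e.\ with lifts of the Hodge filtration, so the deformation groupoids upstairs and downstairs agree. Base change along $\Flex^{\textup{tan}}$ then makes $\mathcal{S}^{\mathrm{nr}}_{K^p}\to\bar{\CB}_{G,\mu}$ formally \'etale. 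Finally, $\bar{\CB}_{G,\mu}$ has unobstructed deformation theory: lifting a $(G,\mu)$-display along a square-zero extension amounts to lifting the $G$-Hodge filtration, a torsor under a vector group built from the tangent space of the smooth variety $G/P_\mu$, so every point lifts. Hence $\mathcal{S}^{\mathrm{nr}}_{K^p}$ is formally smooth over $\mathbb{F}_q$, and being of finite type it is smooth, in particular reduced, so $\bar{\mathcal{S}}_{K^p}=\mathcal{S}^{\mathrm{nr}}_{K^p}$ is smooth. Your steps (i) and (iv) survive, but the smoothness must be fed in through the display stack $\bar{\CB}_{G,\mu}$ and the formal \'etaleness of the period map, not through any alleged smoothness of $\Flex^{\textup{tan}}$ or of $\mathcal{S}^{\textup{tan}}_{K^p}$.
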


Finally, we can define the integral model:

\begin{lem}[{\cite[Section 8]{bultel2008pel}}]\label{constructionofintegralmodel}
There exists a unique pair $(S,D)$, where $S$ is a smooth projective scheme over $W(F_q)$ lifting $\bar{S}$, and $D$ is a $(G,\mu)$-display over $S$ lifting the universal $(G,\mu)$-display over $\bar{S}$.
\end{lem}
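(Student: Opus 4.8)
The plan is to reduce the statement to two inputs: a Serre--Tate type local structure theorem identifying the deformation theory of $\bar{\mathcal{S}}_{K^p}$ with that of its universal $(G,\mu)$-display, and Grothendieck's formal existence theorem. I first recall that the adjoint-nilpotence built into the gauge-type hypothesis (condition (4) of Definition~\ref{gaugetyperepresentation}, via Remark~\ref{givepadicsparse}) makes the stack $\CB_{G,\mu}$ of $(G,\mu)$-displays formally smooth over $\Spf W(\mathbb{F}_q)$, with special fibre $\bar{\CB}_{G,\mu}$; this is the unobstructedness of the deformation theory of adjoint-nilpotent $(G,\mu)$-displays developed in~\cite{hedayatzadeh2024deformations} (and in~\cite{bultel2008pel}). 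The key claim is then that the classifying map $\bar\pi\colon \bar{\mathcal{S}}_{K^p}\to\bar{\CB}_{G,\mu}$ of the universal display is formally \'etale. By the Cartesian square of Definition~\ref{specialfiber}, $\mathcal{S}^{\mathrm{nr}}_{K^p}\to\bar\CB_{G,\mu}$ is the base change along $\Flex^{\textup{tan}}$ of the map $\mathcal{S}^{\textup{tan}}_{K^p}\to\bar{\CB}^{\textup{tan}}_{\tilde G,\mu^\triangleright}$ classifying the flexed display of the universal abelian scheme $Y_{123}$, so by stability of formal \'etaleness under base change it suffices to treat the latter map. There, Grothendieck--Messing theory for $Y_{123}$, together with Zink's equivalence between nilpotent displays and formal $p$-divisible groups -- applicable precisely by adjoint-nilpotence -- and the fact that the flexion operation induces an isomorphism on deformation functors (Proposition~\ref{pro:flex_isogeny}, Theorem~\ref{thm:flex_immersion}, Remark~\ref{rmk:B_perf_embedding}), identify deformations of the PEL abelian scheme with deformations of its $(\tilde G,\mu^\triangleright)$-display. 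In particular $\mathcal{S}^{\mathrm{nr}}_{K^p}$ is formally \'etale over the reduced stack $\bar\CB_{G,\mu}$, hence reduced, hence equal to $\bar{\mathcal S}_{K^p}$, consistently with Lemma~\ref{smoothness}.

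Granting this, the formal lift is tautological: given a formally \'etale lift $S_n\to\CB_{G,\mu}\otimes W_n(\mathbb{F}_q)$ of $\bar\pi$, there is a unique formally \'etale lift $S_{n+1}\to\CB_{G,\mu}\otimes W_{n+1}(\mathbb{F}_q)$ along the square-zero thickening of the target, since formally \'etale morphisms lift uniquely along square-zero extensions of the base. Passing to the limit yields a formal scheme $\hat S$ over $\Spf W(\mathbb{F}_q)$, formally \'etale over $\CB_{G,\mu}$ and therefore formally smooth over $W(\mathbb{F}_q)$, together with a display $\hat D$ obtained by pulling back the universal one; the same lifting-uniqueness statement gives uniqueness of $(\hat S,\hat D)$ up to unique isomorphism. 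It remains to algebraize. By Theorem~\ref{thm:flex_immersion} (and its analogue for the display stacks) the morphism $\bar{\mathcal{S}}_{K^p}\to\mathcal{S}^{\textup{tan}}_{K^p}$ is a closed immersion, so $\bar S$ is projective over $\mathbb{F}_q$ by Remark~\ref{represntailityofPELmodulaiproblem}, and the Hodge line bundle $\det$ of the $\mu$-graded piece of the universal display is ample on $\bar S$, being the restriction (up to the flexion isogeny) of the ample modular bundle on $\mathcal{S}^{\textup{tan}}_{K^p}$. Since this Hodge bundle is defined functorially from $\hat D$, it promotes to a line bundle $\widehat{\mathcal L}$ on $\hat S$ whose special fibre is ample, so Grothendieck's formal existence theorem (EGA~III) algebraizes $\hat S$ to a projective scheme $S$ over $W(\mathbb{F}_q)$ -- smooth over $W(\mathbb{F}_q)$ as it is flat with smooth special fibre -- and algebraizes $\hat D$ to a $(G,\mu)$-display $D$ over $S$ lifting the universal one over $\bar S$. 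Uniqueness of $(S,D)$ follows from uniqueness of $(\hat S,\hat D)$ and uniqueness of the algebraization of a projective formal scheme.

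The main obstacle is the Serre--Tate statement that $\bar\pi$ is formally \'etale. Unwinding $\bar S$ through the flexion diagram and matching the Grothendieck--Messing deformation theory of the PEL abelian scheme $Y_{123}$ with the deformation theory of the flexed display requires genuine care precisely because $\mu^\triangleright$ is non-minuscule: one must use the sparseness and gauge-type hypotheses to show that the ``extra'' non-minuscule directions introduced by $\Flex$ are rigid and contribute nothing to the deformation functor, and that the resulting displays remain adjoint-nilpotent so that Zink's equivalence applies on the nose. Once this local structure result is established, the infinitesimal lifting and the algebraization are essentially formal.
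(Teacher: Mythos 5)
The paper does not actually prove this lemma --- it is imported wholesale from B\"ultel~\cite[Section 8]{bultel2008pel} --- so the comparison can only be with the expected argument there; your architecture (a Serre--Tate formal-\'etaleness statement over the display stack, order-by-order lifting, then algebraization via EGA~III) is the natural one, but its two load-bearing steps are asserted rather than proven, and they are exactly the content of the cited section. The first gap is the formal \'etaleness of $\bar{\mathcal{S}}_{K^p}\to\bar{\CB}_{G,\mu}$. Your base-change argument can at best give formal \'etaleness of $\mathcal{S}^{\mathrm{nr}}_{K^p}\to\bar{\CB}_{G,\mu}$, whereas the object in the lemma is $\bar{\mathcal{S}}_{K^p}$, which by Definition~\ref{specialfiber} is the \emph{underlying reduced subscheme} of $\mathcal{S}^{\mathrm{nr}}_{K^p}$; you then deduce $\mathcal{S}^{\mathrm{nr}}_{K^p}=\bar{\mathcal{S}}_{K^p}$ from the very \'etaleness you are trying to establish, which is circular. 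Moreover, the input you base-change from --- that $\mathcal{S}^{\textup{tan}}_{K^p}\to\bar{\CB}^{\textup{tan}}_{\tilde G,\mu^\triangleright}$ is formally \'etale, including the tensor condition~(3) on the $R_{123}$-action on $Y_{123}$ --- is precisely the nontrivial Serre--Tate-type statement at stake; the fact that B\"ultel passes to the reduced subscheme and that smoothness of $\bar{\mathcal{S}}_{K^p}$ is recorded as a separate lemma (Lemma~\ref{smoothness}) indicates this is not the routine Grothendieck--Messing assertion you invoke, and you yourself flag it as ``the main obstacle'' and defer it. There is also a technical mismatch you do not address: $\Flex$ is constructed in the references at the level of $\CB^{\perf}$, i.e.\ on perfect rings, where infinitesimal deformations are invisible, so ``stability of formal \'etaleness under base change along $\Flex^{\textup{tan}}$'' presupposes a non-perfect incarnation of $\Flex^{\textup{tan}}$ with controlled deformation theory.

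The second gap is in the algebraization. To run Grothendieck's existence theorem you need an ample line bundle on $\bar{\mathcal{S}}_{K^p}$ that lifts compatibly to every $\hat S_n$; your candidate is the determinant of the Hodge filtration of the universal $(G,\mu)$-display, with ampleness claimed because it is ``the restriction (up to the flexion isogeny) of the ample modular bundle'' on $\mathcal{S}^{\textup{tan}}_{K^p}$. But Proposition~\ref{pro:flex_isogeny} only provides an isogeny modulo $p$ between $D$ and $\Flex(D)$; an isogeny does not identify Hodge filtrations --- the whole point of $\Flex$ is that the filtration type changes from $\mu$ to $\mu^\triangleright$ --- so the two determinant bundles are genuinely different and ampleness does not transfer along this relation. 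Nor can you instead lift the PEL Hodge bundle, since the abelian scheme exists only over the special fiber (the generic fiber is not embedded in any PEL Shimura variety), so the existence of a compatibly lifting ample bundle is a real input, again part of what B\"ultel's Section~8 supplies. Until the Serre--Tate \'etaleness for the \emph{reduced} special fiber and the lifting ample bundle are actually established, the unique formal lift and its algebraization --- and hence the lemma --- do not follow from your argument.
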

\begin{dfn}
We define the integral model for the Shimura variety as the unique pair $(\mathcal{S}_{K^p},D)$ that lifts $\bar{\mathcal{S}}_{K^p}$ and its universal display. In other words we have a cartesian diagram:

\[
    \begin{tikzcd}
\mathcal{S}_{K^p}\arrow{r} \arrow{d} &
\CB_{G,\mu} \arrow{d} \\
\bar{\CS}_{K^p} \arrow{r} & \bar{\CB}_{G,\mu}
\end{tikzcd}
\]
\end{dfn}
Let us summarize the main properties of the integral model:

\begin{thm}[{\cite{bultel2008pel}}]\label{mainproperties}
The integral model $\CS$ has the following properties:
\begin{enumerate}
\item(Serre-Tate) The map $\CS_{K^p}\to \CB_{G,\mu}$ is formally étale. Furthermore, the deformation theory of $\CB_{G,\mu}$ is governed by Grothendieck–Messing.

    \item From the universal display over $\mathcal{S}$ we obtain a $G$-torsor $\mathcal{P}$, a map 
    $\pi : \mathcal{P} \to \Gr_{\mathcal{S},G}$, and a flat connection $\nabla$.
    
    \item The monodromy group of the connection $\nabla$ is maximal.
    
    \item  Each connected component of $\mathcal{S}$ contains an elliptic point.
    
    \item  The image of $(\mathcal{P},\pi,\nabla)$ agrees with the natural $G$-torsor, together with the filtration and connection, on $Sh(G,\mu)=\mathcal{S}_{\mathbb{Q}_p}$.
\end{enumerate}
\end{thm}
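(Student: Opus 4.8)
The statement is a compendium of results from \cite{bultel2008pel}; the plan is to recall how each clause is proved there and to verify that the stronger hypotheses imposed in Setting~\ref{hodgestructureofgaugetype} only make the arguments more transparent.

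\textbf{Clause (1).} The universal $(G,\mu)$-display $D$ over $\CS$ has, by definition, an underlying $\CG$-torsor over the ramified Witt vectors of $\CS$, and evaluating at $\CS$ itself produces a $G$-torsor $\CP$; the $\mu$-grading built into the display datum is precisely a Hodge filtration on $\CP$, which we encode as the classifying map $\pi\colon\CP\to\Gr_{\CS,G}$. The connection $\nabla$ is the crystalline connection attached to $D$: a $(G,\mu)$-display over the $p$-adic formal scheme $\CS$ defines a crystal of $G$-torsors (the Dieudonn\'e/Grothendieck--Messing theory developed in \cite{hedayatzadeh2024deformations}), and evaluating the crystal on the infinitesimal self-thickenings of the smooth $W(F_q)$-scheme $\CS$ yields an integrable connection on $\CP$; flatness is automatic from the crystal structure. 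This clause is essentially formal.

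\textbf{Clause (2) --- the main obstacle.} We must show that the monodromy group of $\nabla$, i.e.\ the Zariski closure $H\subseteq G$ of the monodromy representation of a geometric connected component attached to $(\CP,\nabla)$, is as large as possible, namely $G^{\der}$ (the central similitude factor being monodromy-invariant). The argument proceeds in three moves. First, one shows that the classifying map $\CS\to\CB_{G,\mu}$ identifies deformation theories: by Serre--Tate the map $\CS^{\textup{tan}}_{K^p}\to\bar\CB^{\textup{tan}}_{\tilde G,\mu^\triangleright}$ is formally \'etale, and $\Flex$ is a closed immersion (Theorem~\ref{thm:flex_immersion} and its display-level analogue), so the Cartesian square of Definition~\ref{specialfiber} together with Lemma~\ref{smoothness} shows the universal $(G,\mu)$-display is versal at every point; equivalently its Kodaira--Spencer map is an isomorphism, so the period map attached to $(\CP,\pi,\nabla)$ is an immersion and $H$ is infinite. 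Second, one uses that the local system underlying a polarizable variation is semisimple (so $H$ is reductive) and a normality argument for the connected monodromy group inside the ambient ``motivic Galois'' group $G$, in the style of Andr\'e's theorem, to conclude $H\trianglelefteq G^{\der}$. Third, the gauge-type package decomposes $G^{\der}$ into a product of almost-simple factors indexed by the cycles $S$, and Conditions~(3)--(4) of Definition~\ref{gaugetyperepresentation} (adjoint-nilpotence of all displays in play, and the distinguished embedding in each cycle) force the period map to be versal along each factor; hence $H$ surjects onto every factor and a Goursat-type argument gives $H=G^{\der}$. I expect the verification that Kodaira--Spencer is an isomorphism, and the cycle-by-cycle versality bookkeeping, to be the technical heart of the whole theorem.

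\textbf{Clauses (3) and (4).} For (3): the basic Newton stratum $\CS^{b_0}$ is non-empty, since $\CS$ surjects onto $\CB_{G,\mu}$ (Remark~\ref{rmk:B_perf_embedding}, Proposition~\ref{existanceoflattice}) and $\CB_{G,\mu}$ contains the basic point; a basic point lifts, by Serre--Tate applied to the underlying CM abelian variety, to a point carrying the action of an elliptic maximal torus of $G$ --- an elliptic point --- with the requisite torus furnished by a CM subfield of $L$ as in Setting~\ref{genealsettingmetaunitary}. That every connected component contains such a point follows by combining the explicit supply of CM points coming from the admissible CM types of $L$ with the constraint on $\pi_0(\CS)$ imposed by the maximal monodromy of Clause~(2): these CM points already surject onto the set of components. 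For (4): over the generic fiber the package $(\CP,\pi,\nabla)$ is a $G$-local system on $\CS_{\BQ_p}$ with filtration of type $\mu$ (de~Rham, by the crystalline origin of $\nabla$), maximal monodromy, the correct dimension, and the elliptic points of (3); this is exactly the input of Varshavsky's characterization of Shimura varieties, which identifies $\CS_{\BQ_p}$ with $\Sh(G,\mu)$ compatibly with the Hecke action and matches its canonical $G$-torsor-with-filtration-and-connection with $(\CP,\pi,\nabla)$. This identification is carried out in \cite[Section~8]{bultel2008pel}.
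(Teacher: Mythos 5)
The paper does not actually prove this theorem: it is stated as a summary of results imported from \cite{bultel2008pel} (``Let us summarize the main properties of the integral model''), so the only benchmark is Bültel's original arguments and the strategy the present paper sketches in its introduction (construct the special fiber via $\Flex$, produce enough elliptic points, then invoke Varshavsky's characterization). Your clauses (1) and (4) are consistent with that strategy: (1) is indeed formal crystal/display yoga, and (4) is exactly the Varshavsky step described in the introduction.

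For clauses (2) and (3), however, your reconstruction has genuine gaps rather than a proof. In (2), the chain ``Kodaira--Spencer is an isomorphism, hence the period map is an immersion, hence $H$ is infinite'' is asserted, not established; and the step ``the underlying local system of a polarizable variation is semisimple, so $H$ is reductive and, by an Andr\'e-style normality argument, $H\trianglelefteq G^{\der}$'' presupposes a polarizable variation with ambient motivic group $G$, which is precisely what is \emph{not} available here: meta-unitary Shimura varieties need not be of abelian type, the only abelian varieties in sight are the auxiliary ones in $\CS^{\textup{tan}}_{K^p}$ related to the universal $(G,\mu)$-display only through $\Flex^{\textup{tan}}$ and an isogeny mod $p$ (Proposition \ref{pro:flex_isogeny}), and $G$ itself is not the Mumford--Tate group of any family you have exhibited. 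Bültel's maximality-of-monodromy statement is proved by a direct analysis of the display/connection, not by appeal to Andr\'e's theorem, and your Goursat-type ``cycle-by-cycle versality bookkeeping'' is a placeholder for the actual technical content. In (3), the claim that the CM points you construct ``already surject onto the set of components'' does not follow from maximal monodromy: $\pi_0$ of the integral model is controlled by a separate argument (Hecke orbits/strong approximation on components in Bültel's treatment), and producing an elliptic point in \emph{each} component is exactly the delicate step that the introduction of this paper flags (``Once we show that the Igusa stack contains enough elliptic points\dots''). As it stands, your write-up records the correct global architecture but leaves the two load-bearing steps --- maximal monodromy and existence of elliptic points in every component --- unproved.
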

\section{Local model diagram}
In the context of meta-unitary Shimura varieties, one has a v-sheaf theoretic local model diagram, as constructed by Pappas and Rapoport in \cite[§4.9.1]{pappas2022p}. This diagram provides a global way to relate the singularities of integral model of the Shimura variety, $\mathcal{S}_K$, to the singularities of its corresponding local model.

Let $\mathcal{S}_K$ be the integral model of the Shimura variety, and let $\mathcal{S}_K^{\diamondsuit/}$ be its associated v-sheaf over $\Spd(\mathcal{O}_E)$. Let $\mathcal{P}$ be the universal $G$-shtuka over $\mathcal{S}_K$, which is bounded by the minuscule cocharacter $\mu$. The construction of the diagram begins by defining a new v-sheaf, denoted $\tilde{\mathcal{S}}_K^v$, which is a $G$-torsor over $\mathcal{S}_K^{\diamondsuit/}$.

The points of $\tilde{\mathcal{S}}_K^v$ with values in a perfectoid space $S = \Spa(R, R^+)$ are defined as pairs $(x, \alpha)$, where:
\begin{enumerate}
    \item $x: S \to \mathcal{S}_K^{\diamondsuit/}$ is a point, which determines an untilt $S^\sharp = \Spa(R^\sharp, R^{\sharp+})$ of $S$.
    \item $\alpha: G \times S^\sharp \to \mathcal{P}_\varphi(S^\sharp)$ is a $G$-isomorphism (a section or ``framing''). Here, $\mathcal{P}_\varphi(S^\sharp)$ is the pullback of the $G$-torsor $\varphi^*(\mathcal{P})$ along the canonical inclusion $S^\sharp \hookrightarrow Y_{[0,\infty)}(S) = S \,{ \dot{\times} }\, \BZ_p$.
\end{enumerate}

This construction yields a v-sheaf $\tilde{\mathcal{S}}_K^v$ equipped with two natural morphisms of v-sheaves.

First, there is a projection map which forgets the framing $\alpha$:
\[
\pi_K^v: \tilde{\mathcal{S}}_K^v \to \mathcal{S}_K^{\diamondsuit/}
\]
This morphism is a $G^\diamondsuit$-torsor, where $G^\diamondsuit$ is the v-sheaf of groups whose $S$-valued points are pairs of an untilt $S^\sharp$ and an element of $G(R^\sharp)$.

Second, a map to the v-sheaf local model $\mathcal{M}_{G,\mu}^v$ is defined. Given an $S$-valued point $(x, \alpha)$ of $\tilde{\mathcal{S}}_K^v$, the section $\alpha$ can be extended to the formal completion $\widehat{S^\sharp}$ of $S\,{\dot{\times}}\,\BZ_p$ along $S^\sharp$. Using Beauville-Laszlo glueing, the pair $(\mathcal{P}, \varphi_{\mathcal{P}} \circ \alpha)$ defines an $S$-valued point of the affine $B_{\textup{dR}}$-Grassmannian $\Gr_{G, \Spd(\mathcal{O}_E)}$. Since the shtuka $\mathcal{P}$ is bounded by $\mu$, this point lies within the v-sheaf local model $\mathcal{M}_{G,\mu}^v \subset \Gr_{G, \Spd(\mathcal{O}_E)}$. This gives a morphism
\[
q_K^v: \tilde{\mathcal{S}}_K^v \to \mathcal{M}_{G,\mu}^v
\]
which is $G^\diamondsuit$-equivariant.

Altogether, these objects form the \textbf{v-sheaf theoretic local model diagram} for $\mathcal{S}_K$:
\begin{equation}
\begin{tikzcd}
& \tilde{\mathcal{S}}_K^v \arrow[dl, "\pi_K^v"'] \arrow[dr, "q_K^v"] & \\
\mathcal{S}_K^{\diamondsuit/} & & \mathcal{M}_{G,\mu}^v
\end{tikzcd}
\label{eq:vsheaf_lmd}
\end{equation}
The existence of this diagram is a formal consequence of the existence of the G-shtuka $\mathcal{P}_K$ over $\mathcal{S}_K$, which is bounded by $\mu$.

\section{Rapoport-Papas Uniformization}

Although it is not required for the main results of this paper, we briefly outline how one can generalize the Rapoport-Papas uniformization to the setting of meta-unitary Shimura varieties. A more detailed treatment will be presented in the forthcoming work \cite{hedayatzadehrappaportlanglands}.

In \cite{hedayatzadeh2025embeddings}, we defined compatible embeddings of the regular meta-unitary Shimura variety into the Hodge-type Shimura variety $\CS_{\tilde{G},\mu^\triangleright}$ and of the associated local Shimura variety into the Rapoport-Zink space $RZ_{\tilde{G},\mu^\triangleright}$. We define the following space:
$$RZ_{G,\mu,x_0} = \hat{\CS}_K \otimes_{\CA_K} (RZ_{\tilde{G},\mu^\triangleright} \otimes \CO_E)$$

The proof of \cite[Lemma 10.4.1]{pappas2022p} applies directly in our situation. Therefore, the space $RZ_{G,\mu,x_0}$ is normal and flat, and its completion is isomorphic to $\hat{\CS}_K$. Similarly, the argument of \cite[Proposition 4.10.3]{pappas2022p} holds without major changes. The only part that requires adaptation is the proof of \cite[Proposition 4.7.1]{pappas2022p}. However, this proof relies primarily on general facts about shtukas and the Fargues-Fontaine curve, which continue to hold in our context. This allows for the construction of the required map from $\Spf R$ to $\CM_{G,\mu}^{\textup{int}}$.

The final ingredient needed for uniformization is Condition U. This is provided by \cite[Corollary 6.3]{gleason2026connected}, whose proof goes through in our situation. We note that this is possible because the analogues of both Serre-Tate and Grothendieck-Messing theory hold for meta-unitary Shimura varieties. In summary, we have the following proposition.

\begin{pro}
Let $(G,\mu)$ be a meta-unitary Shimura datum and let $k$ be a finite field. Let $x \in \CS_K(k)$ be a point, let $b_x$ be the associated $G$-isocrystal, and let $x_0$ be the corresponding point in the local Shimura variety. There exists a uniformization map
$$\Theta:\CM^{\textup{int}}_{G,b_x,\mu}\to \hat{\CS}_K$$
which restricts to an isomorphism
$$\Theta:\CM^{\textup{int}}_{G,b_x,\mu,/x_0}\to (\CS_{K,W(k),/x})^\diamondsuit$$
\end{pro}
\subsection{Local Igusa Stack}
Let $b \in B(G,\mu)$. This defines a point of the stack $\Isoc_G$ associated to $b$, which we denote by $\mathcal{D}_b$.

\begin{dfn}\label{definitionIgusastack}
 The \emph{(local) Igusa stack} associated to this stratum is the pro-étale cover of the central leaf $\mathcal{C}_b$, defined as the fibered category of pairs $(\mathcal{T}, \iota)$ consisting of a point $\mathcal{T}$ of the central leaf together with a trivialization of its associated isocrystal.

More precisely, for a perfect $\overline{\mathbb{F}}_p$-algebra $R$, we set
\[
\Igs_b(R) 
:= 
\left\{
(\mathcal{T}, \iota) \ \middle|\ 
\begin{aligned}
&\mathcal{T} \in \mathcal{C}_b(R), \\
&\iota: \CD_{b,R} \xrightarrow{\sim} \pi_{\mathrm{Crys}}(\mathcal{T}) \text{ is an isomorphism}
\end{aligned}
\right\}.
\]
The natural forgetful map $\Igs_b \longrightarrow \mathcal{C}_b$ is a pro-étale $G_b(\mathbb{Q}_p)$-torsor, where $G_b$ is the $\sigma$-centralizer of $b$.
\end{dfn}

\begin{rmk}\label{representabilityoflocalIgusahodgetype}
When $(G,\mu)$ is of compact Hodge-type, The local Igusa stack is representable by a perfect affine scheme\cite{caraiani2017generic}.
\end{rmk}
\begin{pro}\label{closedimmersionlocalIgusastack}
Let $(G,\mu)$ be a meta-unitary Shimura variety and $(\tilde{G},\mu^\triangleright)$ be the associted PEL-unitary Shimura datum. Let $[b]$ be a newton stratum of $\CS(G,\mu)$ and $[\tilde{b}]$ be the newton stratum of $\CS(\tilde{G},\mu)$ that contains the image of $[b]$. Then there is a closed immesrion $\Flex:\Igs_{b,G,\mu}\to \Igs_{b,\tilde{G},\mu^{\triangleright}}$.
\end{pro}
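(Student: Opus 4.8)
The plan is to produce the morphism by transporting $(G,\mu)$-displays and their isocrystal trivialisations along the functor $\Flex\colon\CB^{\perf}_{\CG,\mu}\to\CB^{\perf}_{\tilde\CG,\mu^{\triangleright}}$, and then to prove it is a closed immersion by reducing, via the pro-\'etale torsor structure of the local Igusa stacks over their central leaves, to the corresponding statement for central leaves. First I would observe that, since $\CS(G,\mu)$ is the fibre product $\bar{\CS}_{K^p}\times_{\bar{\CB}_{G,\mu}}\CB_{G,\mu}$ and $\bar{\CS}_{K^p}$ is carved out of $\CS^{\textup{tan}}_{K^p}$ via $\Flex^{\textup{tan}}$, pulling back $\Flex$ supplies a morphism $\Flex\colon\CS(G,\mu)^{\diamondsuit}\to\CS(\tilde G,\mu^{\triangleright})^{\diamondsuit}$ satisfying $\tilde\pi_{\mathrm{Crys}}\circ\Flex=\Flex\circ\pi_{\mathrm{Crys}}$. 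For $\CT$ in the central leaf $\CC_b$ over a perfect $\overline{\BF}_p$-algebra $R$, the identity $\pi_{\mathrm{Crys}}(\CT_x)\simeq b$ at all geometric points $x$ then gives $\tilde\pi_{\mathrm{Crys}}(\Flex(\CT)_x)\simeq\Flex(b)=:\tilde b$ (so $\tilde b$ lies over the Newton stratum $[\tilde b]$ of the statement), hence $\Flex$ carries $\CC_b$ into the central leaf $\CC_{\tilde b}\subset\CS(\tilde G,\mu^{\triangleright})$. Finally, Proposition~\ref{pro:flex_isogeny} provides an isogeny modulo $p$ between $\pi_{\mathrm{Crys}}(\CT)$ and $\Flex(\pi_{\mathrm{Crys}}(\CT))=\tilde\pi_{\mathrm{Crys}}(\Flex(\CT))$, and since passage to the underlying isocrystal is insensitive to isogenies this transports a trivialisation $\iota\colon\CD_{b,R}\xrightarrow{\sim}\pi_{\mathrm{Crys}}(\CT)$ to a trivialisation $\tilde\iota\colon\CD_{\tilde b,R}\xrightarrow{\sim}\tilde\pi_{\mathrm{Crys}}(\Flex(\CT))$, with $\CD_{\tilde b}$ the pushforward of $\CD_b$. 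The rule $(\CT,\iota)\mapsto(\Flex(\CT),\tilde\iota)$ is the desired morphism $\Flex\colon\Igs_{b,G,\mu}\to\Igs_{\tilde b,\tilde G,\mu^{\triangleright}}$; it lies over $\Flex\colon\CC_b\to\CC_{\tilde b}$ and is equivariant for the closed inclusion of locally profinite groups $G_b(\BQ_p)\hookrightarrow\tilde G_{\tilde b}(\BQ_p)$.

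Next I would factor this morphism as
\[
\Igs_{b,G,\mu}\;\xrightarrow{\;a\;}\;\Igs_{\tilde b,\tilde G,\mu^{\triangleright}}\times_{\CC_{\tilde b}}\CC_b\;\xrightarrow{\;\mathrm{pr}\;}\;\Igs_{\tilde b,\tilde G,\mu^{\triangleright}}.
\]
Pro-\'etale locally on $\CC_b$, where the $G_b(\BQ_p)$-torsor $\Igs_{b,G,\mu}$ trivialises, the map $a$ is the product of the identity with $\underline{G_b(\BQ_p)}\hookrightarrow\underline{\tilde G_{\tilde b}(\BQ_p)}$, hence a closed immersion; and $\mathrm{pr}$ is the base change of $\Flex\colon\CC_b\to\CC_{\tilde b}$ along the representable pro-\'etale map $\Igs_{\tilde b,\tilde G,\mu^{\triangleright}}\to\CC_{\tilde b}$. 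So it suffices to show that $\Flex\colon\CC_b\to\CC_{\tilde b}$ is a closed immersion. I would do this by identifying $\CC_b$ with $\CS(G,\mu)^{\diamondsuit}\times_{\CS(\tilde G,\mu^{\triangleright})^{\diamondsuit}}\CC_{\tilde b}$: the inclusion ``$\subseteq$'' is the first step, and for ``$\supseteq$'' a point $\CT$ with $\Flex(\CT)\in\CC_{\tilde b}$ has $\Flex(\pi_{\mathrm{Crys}}(\CT_x))\simeq\Flex(b)$ at every geometric point $x$, hence $\pi_{\mathrm{Crys}}(\CT_x)\simeq b$ because $\Flex$ is injective on isomorphism classes of $(G,\mu)$-displays over algebraically closed fields (a consequence of its full faithfulness, cf.\ the discussion preceding Theorem~\ref{thm:flex_immersion}). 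Since $\CS(G,\mu)^{\diamondsuit}\to\CS(\tilde G,\mu^{\triangleright})^{\diamondsuit}$ is a monomorphism---a meta-unitary point being determined by its underlying polarised abelian variety with $\CO_L^3$-action, prime-to-$p$ level structure and $R_{123}$-tensors---and is proper, $\CS(G,\mu)$ being projective by Theorem~\ref{mainproperties}, it is a closed immersion, and base changing along $\CC_{\tilde b}\hookrightarrow\CS(\tilde G,\mu^{\triangleright})^{\diamondsuit}$ yields the claim.

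The point I expect to require the most care is this last reduction: checking that $\CS(G,\mu)^{\diamondsuit}\to\CS(\tilde G,\mu^{\triangleright})^{\diamondsuit}$ (equivalently, the closed-subscheme relation of $\CS^{\textup{tan}}_{K^p}$ inside the PEL-unitary integral model) is a genuine closed immersion and not merely a monomorphism, and that $\Flex$ is injective on displays over algebraically closed fields, since together these are exactly what pins $\CC_b$ down as the full preimage of $\CC_{\tilde b}$. As a fallback I would instead route the argument through the closed immersion $\Flex\colon\CM^{\mathrm{int}}_{G,\mu,b}\hookrightarrow\CM^{\mathrm{int}}_{\tilde G,\mu^{\triangleright},b}$ of Theorem~\ref{thm:flex_immersion}, using the $\CM^{\mathrm{int}}$-valued invariant attached to a point of $\Igs_b$---its shtuka together with the rigidification at $\infty$ induced by $\iota$---but then exhibiting $\Igs_{b,G,\mu}$ as the corresponding fibre product over $\CM^{\mathrm{int}}_{\tilde G,\mu^{\triangleright},b}$ requires matching the global isocrystal trivialisation with the near-$\infty$ rigidification, which is a further small verification.
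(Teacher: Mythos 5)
Your proposal is correct and follows essentially the same route as the paper: the paper's (terse) proof simply invokes the 2-Cartesian diagram \ref{diagramreducedfiber}, with $\Flex^{\textup{tan}}$ fully faithful and representable by a closed immersion, to induce the closed monomorphism on local Igusa stacks, which is exactly the input you use (together with Proposition \ref{pro:flex_isogeny} to transport the isocrystal trivialization). Your write-up just makes explicit the steps the paper leaves implicit — that the central leaf $\CC_b$ is the full preimage of $\CC_{\tilde b}$ (via full faithfulness of $\Flex$) and the reduction along the pro-\'etale torsor structure with $G_b(\BQ_p)\hookrightarrow \tilde G_{\tilde b}(\BQ_p)$ closed.
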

\begin{proof}Recall that there is a \(2\)-cartesian diagram
\[
\begin{tikzcd}
\bar{\mathcal{S}}_{K^p} \arrow{r} \arrow{d} &
\bar{\CB}_{G,\mu} \arrow{d}{\Flex^{\mathrm{tan}}} \\
\mathcal{S}^{\mathrm{tan}}_{K^p} \arrow{r} &
\bar{\CB}_{\tilde{G},\mu^\triangleright}^{\mathrm{tan}}
\end{tikzcd}
\]
where \( \Flex^{\mathrm{tan}} \) is fully faithful and representable by a closed immersion. Moreover, the morphism \( \Flex \) preserves isogeny classes of displays. It follows that we obtain a commutative diagram
\[
\begin{tikzcd}
\bar{\mathcal{S}}_{K^p} \arrow{r} \arrow{d} &
B(G,\mu) \arrow{d} \\
\mathcal{S}^{\mathrm{tan}}_{K^p} \arrow{r} &
B(\tilde{G},\mu^\triangleright)
\end{tikzcd}
\]
which induces a closed immersion
\[
\Flex \colon C_{G,b} \hookrightarrow C_{\tilde{G},b}
\]
between the central leaves.

On the other hand, the diagram
\[
\begin{tikzcd}
\Igs_{G,b} \arrow{r} \arrow{d} &
C_{G,b} \arrow{d} \\
\Igs_{\tilde{G},b} \arrow{r} &
C_{\tilde{G},b}
\end{tikzcd}
\]
is cartesian. Indeed, using that \( \Flex \) is fully faithful, giving a point \( \mathcal{T} \) in \( \mathcal{C}_b \) together with an isomorphism
\[
\iota \colon \mathcal{D}_{b,R} \xrightarrow{\sim} \pi_{\mathrm{Crys}}(\mathcal{T})
\]
is equivalent to giving the pair \( (\mathcal{T}, \Flex(\iota)) \).
\end{proof}
\begin{cor}\label{represntabilityIgusastack}
Let $(G,\mu)$ be a meta-unirary Shimura variety and $[b]$ be a newton stratum, then $\Igs_b$ is representable by a perfect affine scheme.
\end{cor}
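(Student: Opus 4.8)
The plan is to deduce the corollary formally from Proposition~\ref{closedimmersionlocalIgusastack} together with the representability input recorded in Remark~\ref{representabilityoflocalIgusahodgetype}. First I would check that the associated PEL datum $(\tilde{G},\mu^{\triangleright})$ falls within the scope of Remark~\ref{representabilityoflocalIgusahodgetype}: it is of PEL (hence Hodge) type by construction, and the corresponding Shimura variety is projective by Remark~\ref{represntailityofPELmodulaiproblem}, so B\"ultel's ambient unitary group is anisotropic modulo centre over $\mathbb{R}$; this is exactly the ``compact Hodge type'' hypothesis. Consequently, for the Newton stratum $[\tilde{b}]\in B(\tilde{G},\mu^{\triangleright})$ containing the image of $[b]$, the local Igusa stack $\Igs_{\tilde{b},\tilde{G},\mu^{\triangleright}}$ is representable by a perfect affine scheme, say $\Spec A$ with $A$ a perfect $\overline{\BF}_p$-algebra.

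Next I would invoke Proposition~\ref{closedimmersionlocalIgusastack}, which provides a closed immersion $\Flex\colon \Igs_{b}=\Igs_{b,G,\mu}\hookrightarrow \Igs_{\tilde{b},\tilde{G},\mu^{\triangleright}}=\Spec A$. Hence $\Igs_{b}$ is representable by a closed subfunctor $\Spec(A/I)\subset \Spec A$ for some ideal $I\subset A$. Since every test object in $\Perf_{\overline{\BF}_p}$ is a perfect, in particular reduced, $\overline{\BF}_p$-algebra, the functors represented by $A/I$ and by $A/\sqrt{I}$ agree on $\Perf_{\overline{\BF}_p}$; moreover $A/\sqrt{I}$ is again a perfect $\overline{\BF}_p$-algebra, because a quotient of a perfect ring by a radical ideal is perfect (the Frobenius of the quotient is surjective as the Frobenius of $A$ is, and injective since the quotient is reduced). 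Therefore $\Igs_{b}\cong \Spec(A/\sqrt{I})$ is representable by a perfect affine scheme, proving the claim.

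The only genuine input is Remark~\ref{representabilityoflocalIgusahodgetype}, i.e.\ the Caraiani--Scholze representability of infinite-level local Igusa stacks at an arbitrary Newton point for compact PEL unitary data, which I would take as a black box. Beyond that, the two points requiring care are: (i) confirming that B\"ultel's ambient datum $(\tilde{G},\mu^{\triangleright})$ really lies in the class treated there --- this is where one uses the properness of Remark~\ref{represntailityofPELmodulaiproblem} and the definiteness built into the gauge-type conditions of Setting~\ref{hodgestructureofgaugetype}; and (ii) that the Newton strata are matched correctly, namely that $\Flex$ sends the leaf over $[b]$ into the leaf over the stratum $[\tilde{b}]$ chosen in Proposition~\ref{closedimmersionlocalIgusastack}, which is immediate from the compatibility of $\pi_{\mathrm{Crys}}$ with $\Flex^{\textup{tan}}$ in the Cartesian diagram of Definition~\ref{specialfiber}. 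I do not expect any further obstacle: once Proposition~\ref{closedimmersionlocalIgusastack} is in hand, the corollary is essentially formal.
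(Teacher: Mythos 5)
Your proposal follows exactly the paper's own route: the proof given there is precisely ``combine Remark~\ref{representabilityoflocalIgusahodgetype} with the closed immersion of Proposition~\ref{closedimmersionlocalIgusastack}.'' Your additional checks (that the ambient PEL datum is compact Hodge type, that the Newton strata match, and that the closed subfunctor can be represented by the perfect ring $A/\sqrt{I}$) are sensible elaborations of details the paper leaves implicit, not a different argument.
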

\begin{proof}
This follows from \ref{representabilityoflocalIgusahodgetype} and \ref{closedimmersionlocalIgusastack}.
\end{proof}
\section{Igusa stack}

Let $(G, \mu)$ be a Shimura datum with reflex field $E$, and let $K_p \subset G(\mathbb{Q}_p)$ be a hyperspecial compact open subgroup. We consider levels of the form $K = K_p K^p$, where $K^p$ varies over compact open subgroups of $G(\mathbb{A}_f^p)$.

Let $\mathcal{P}^{\mathrm{rig}}$ denote the universal $G$-shtuka defined over the generic fiber $\Sh_K(G, \mu)$. Consider a system $(\mathcal{S}_{K^p}, \mathcal{P})_{K^p}$, where:
\begin{itemize}
  \item for each $K^p \subset G(\mathbb{A}_f^p)$, $\mathcal{S}_{K^p}$ is an integral model of $\Sh_K(G, \mu)$ equipped with finite étale transition maps;
  \item Let $\mathcal{P}$ be a $G$-shtuka over $\mathcal{S}_K$ extending $\mathcal{P}^{\mathrm{rig}}$ (corresponding to a morphism $\pi_{\mathrm{Crys}} \colon \mathcal{S}_K \to \Sht_{G,\mu}$). Furthermore, assume that $\mathcal{P}$ comes from a $(G,\mu)$-display $\mathcal{D}$. (If \cite[Conjecture 8.8]{hedayatzadeh2024deformations} is true, then $\mathcal{P}$ comes from a $(G,\mu)$-display on $\mathcal{S}_K$.) 
\end{itemize}
The tower of level structures induces pro-étale $\ell$-adic local systems $\mathcal{L}_\ell$ on $\mathcal{S}_K$. Furthermore, we assume the following extension property: for every perfectoid pair $(R,R^+)$, every point $x \in \mathcal{S}_K^\diamondsuit(R,R^+)$, and every quasi-isogeny $\phi \colon \mathcal{P}_x \to \mathcal{P}'$, there exists a unique point $x' \in \mathcal{S}_K^\diamondsuit(R,R^+)$ such that for each $\ell \neq p$, the local systems $\mathcal{L}_{\ell,x}$ and $\mathcal{L}_{\ell,x'}$ are isomorphic.

Suppose that $(\mathcal{S}_K, \mathcal{P})$ satisfies the \emph{Serre--Tate lifting property}: we assume that the map $\mathcal{P}:\CS\to \CB^{\perf}(G,\,\mu)$ is formally \'etale.
\begin{pro}[Rigidity of Quasi-Isogenies]\label{quasi-isogeny over nilpotent thickening}
In the setting above, suppose that $S' \to S$ is a surjective morphism of rings such that $p$ is nilpotent in $S'$ and the kernel of the morphism is nilpotent. Then the restriction base change from $S'$ to $S$ induces an equivalence between the groupoids of points of $\mathcal{S}$ valued in $S'$ and in $S$, considered up to quasi-isogeny.
\end{pro}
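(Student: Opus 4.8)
\emph{Plan.} The goal is to show that the base-change functor
\[
F\colon \mathcal{S}(S')^{\mathrm{qi}} \longrightarrow \mathcal{S}(S)^{\mathrm{qi}}
\]
is an equivalence of groupoids, where $\mathcal{S}(-)^{\mathrm{qi}}$ denotes the groupoid whose objects are the corresponding $\mathcal{S}$-valued points $x$ and whose morphisms $x\to y$ are the quasi-isogenies $\mathcal{P}_x\to\mathcal{P}_y$ of the associated $(G,\mu)$-shtukas that are compatible with the prime-to-$p$ level structures. I would check essential surjectivity and full faithfulness separately, the point being that this cleanly separates the \emph{crystalline} datum (the display $\mathcal{P}_x$) from the \emph{prime-to-$p$} datum (the local systems $\mathcal{L}_\ell$, $\ell\neq p$), which behave very differently under a nilpotent thickening.

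\emph{Essential surjectivity.} Given $x\in\mathcal{S}(S)$, I claim it lifts to $\mathcal{S}(S')$ on the nose, so that no quasi-isogeny is needed here at all. Since $p$ is nilpotent in $S'$, the ring $S'$ is a $\mathbb{Z}/p^N$-algebra, and because $\mathcal{O}_E$ (here $=W(\mathbb{F}_q)$, as $E$ is unramified) is formally étale over $\mathbb{Z}_p$, the $\mathcal{O}_E$-algebra structure on $S$ extends uniquely to $S'$. As $\mathcal{S}_{K^p}$ is smooth over $\mathcal{O}_E$ by Lemma~\ref{smoothness} and Lemma~\ref{constructionofintegralmodel}, and $\Spec S\hookrightarrow\Spec S'$ is a closed immersion of affines with nilpotent ideal, formal smoothness produces $x'\in\mathcal{S}(S')$ with $x'|_S=x$, hence $F(x')\cong x$. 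The quasi-isogeny relation therefore enters only to absorb the genuine non-uniqueness of such lifts, which is what makes full faithfulness nontrivial.

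\emph{Full faithfulness.} Fix $x,y\in\mathcal{S}(S')$. I must show that restriction along $S'\to S$ is a bijection from $\Mor_{S'}(x,y)$ onto $\Mor_S(x|_S,y|_S)$, i.e. that a level-compatible quasi-isogeny $\mathcal{P}_{x|_S}\to\mathcal{P}_{y|_S}$ lifts uniquely to $S'$. I split this into the display part and the $\ell$-adic part. The $\ell$-adic part is immediate: the $\mathcal{L}_\ell$ are pro-étale, hence lisse on the special fibre, and a nilpotent thickening induces an equivalence of étale sites, so both the existence of an isomorphism $\mathcal{L}_{\ell,x}\simeq\mathcal{L}_{\ell,y}$ and its compatibility with the level structures are detected over $S$ and lift uniquely. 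The display part is the actual content: it is the $(G,\mu)$-display analogue of Drinfeld's rigidity of quasi-isogenies, namely that along a nilpotent thickening of rings in which $p$ is nilpotent, a quasi-isogeny of $(G,\mu)$-displays over $S$ lifts uniquely to one over $S'$. For $G=\GL_n$ with $\mu$ minuscule this is exactly Drinfeld's theorem for $p$-divisible groups via crystalline Dieudonné theory; for general $G$ I would deduce it either Tannakianly---choose a faithful $G\hookrightarrow\GL(\Lambda)$ cut out by a Frobenius-fixed tensor, lift the underlying $\GL(\Lambda)$-quasi-isogeny uniquely, and transport the tensor along it (using Remark~\ref{rmk:B_perf_embedding} to pass freely between displays and shtukas)---or, in the spirit of this paper, via the closed embedding $\Flex\colon\CB^{\perf}_{G,\mu}\hookrightarrow\CB^{\perf}_{\tilde G,\mu^\triangleright}$, reducing to the symplectic case where $(G,\mu)$-displays are Dieudonné modules of polarized abelian schemes and the statement is classical. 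Combining the two parts gives $\Mor_{S'}(x,y)\xrightarrow{\ \sim\ }\Mor_S(x|_S,y|_S)$, and together with essential surjectivity this proves $F$ is an equivalence.

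\emph{Main obstacle.} The hard part is the crystalline rigidity for general $G$. The reduction to Drinfeld's classical $\GL_n$ statement is routine, but two points need care: in the Tannakian route one must verify that Frobenius-fixed tensors really are transported along the (unique) lift of the $\GL_n$-quasi-isogeny and that this condition cuts out the $(G,\mu)$-displays over $S'$; in the $\Flex$ route one must remember that $\Flex$ is only an isogeny modulo $p$ (Proposition~\ref{pro:flex_isogeny}), so it induces an equivalence only after passing to quasi-isogeny categories---which is exactly the regime of this proposition, and explains why the statement is phrased ``up to quasi-isogeny'' rather than on the nose.
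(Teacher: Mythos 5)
Your argument is essentially correct, but it takes a somewhat different route from the paper, and one step narrows the scope. The paper proves the proposition entirely inside the axiomatic setting of Section~5: it uses the assumed Serre--Tate lifting property (formal \'etaleness of $\pi_{\mathrm{Crys}}\colon\CS\to\Sht_{G,\mu}$) to reduce the whole statement to the stack $\CB_{G,\mu}$ of $(G,\mu)$-displays, obtains essential surjectivity from the deformation theory of displays (lifts exist, with deformations controlled by the Hodge filtration), and then concludes from rigidity of the underlying isocrystal with $G$-structure that all lifts are canonically quasi-isogenous. Your full-faithfulness step is in substance the same rigidity input, just made explicit: Drinfeld-type rigidity of quasi-isogenies for $(G,\mu)$-displays, deduced Tannakianly or via $\Flex$ (and your remark that $\Flex$ is only an isogeny mod $p$, hence only an equivalence after passing to quasi-isogeny categories, is exactly the right caveat); your treatment of the prime-to-$p$ local systems via invariance of the \'etale site is fine and is implicit in the paper. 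The genuine divergence is essential surjectivity: you lift $x\in\CS(S)$ using smoothness of the integral model over $\CO_E$ (Lemmas~\ref{smoothness} and~\ref{constructionofintegralmodel}), which holds for the meta-unitary models but is not one of the axioms of the setting in which the proposition is stated; the paper instead lifts the display by deformation theory and then uses the formally \'etale period map to lift the point of $\CS$, so its argument applies to any system $(\CS_K,\CP)$ satisfying the stated hypotheses. If you replace your smoothness appeal by this combination (display lifts plus the Serre--Tate axiom), your proof aligns with the paper's and covers the full generality; as written, it proves the statement only for integral models known to be formally smooth over $\CO_E$.
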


\begin{proof}
It suffices to establish the corresponding statement for the stack $\mathcal{B}^{\perf}_{G,\mu}$. By the deformation theory of $(G,\mu)$-displays, every object over $S$ admits a lift to $S'$. The obstructions to lifting and the deformations are controlled by the Hodge filtration on the associated crystal. While the lift as an integral object depends on the choice of filtration, the underlying iso-crystal with $G$-structure is rigid. Consequently, all such lifts are canonically quasi-isogenous to one another.
\end{proof}

\begin{example}\label{example:integral_model_prop}
The canonical integral models of Shimura varieties of abelian type and meta-unitary Shimura varieties satisfy this condition. Let us explain this in more detail in the case of meta-unitary Shimura varieties. Consider the commutative diagram

\[
\begin{tikzcd}
\bar{\mathcal{S}}_{K^p} \arrow{r} \arrow{d} &
\bar{\CB}_{G,\mu} \arrow{d}{\Flex^{\textup{tan}}} \\
\mathcal{S}^{\textup{tan}}_{K^p} \arrow{r} & \bar{\CB}_{\tilde{G},\mu^\triangleright}^{\textup{tan}}
\end{tikzcd}
\]

A quasi-isogeny between the points $x$ and $x'$ consists of a quasi-isogeny between their associated displays $\CP_x$ and $\CP_{x'}$, together with a quasi-isogeny between their associated abelian varieties $\CA_x$ and $\CA_{x'}$, such that both induce the same image in the category of quasi-isogenies of $\CB^\tan$. Such a quasi-isogeny is said to be \emph{at $p$} if the degree of the quasi-isogeny between the abelian varieties is a power of $p$.

More precisely, let $x=(A,\CD)$ and $x'=(A',\CD')$ be two points of $\bar{S}$. By a quasi-isogeny between $x$ and $x'$ we mean a quasi-isogeny $\tau$ between $A$ and $A'$ together with a quasi-isogeny $\rho$ between $\CD$ and $\CD'$ such that $\Flex(\rho)$ and $\tau$ are compatible with each other.
%quasi-isogeny between displays
\end{example}

\begin{dfn}\label{dfn:formal_quasi_isogeny}
Let $(R,R^+)$ be a perfectoid ring with pseudo-uniformizer $\varpi$. A \emph{formal quasi-isogeny} between two points $x,y \in \mathcal{S}(R,R^+)$ is a quasi-isogeny between their reductions $x_{R^+/\varpi}$ and $y_{R^+/\varpi}$ over $R^+/\varpi$.
\end{dfn}
In this setting, we proceed to define the Igusa stack.

\begin{dfn}\label{dfn:prestack_Igusa}
Let $\mathbb{F}_q$ be the residue field of the reflex field $E$. Consider the prestack $\Igs_{G,\mu,K^p}^{\mathrm{pre}}$ over $\Spd(\mathbb{F}_q)$ that assigns to a perfectoid affinoid pair $(R, R^+)$ over $\Spd(\mathbb{F}_q)$, with pseudo-uniformizer $\varpi$, the groupoid whose objects are points $x \in \mathcal{S}(R, R^+)$, and whose morphisms are formal quasi-isogenies. Let $\Igs_{G,\mu,K^p}$ denote the stackification of $\Igs_{G,\mu,K^p}^{\mathrm{pre}}$ in the $v$-topology.
\end{dfn}

\begin{rmk}\label{frobeniusisidentity}
There is a natural transformation between the absolute Frobenius and the identity on the prestack $\Igs_{G,\mu,K^p}^{\mathrm{pre}}$. This allows us to descend the stack $\Igs_{G,\mu,K^p}$ to $\Spd(\mathbb{F}_p)$.
\end{rmk}

\begin{rmk}\label{rmk:indep_uniformizer}
By the rigidity of quasi-isogenies (Proposition \ref{quasi-isogeny over nilpotent thickening}), the category $\Igs_{G,\mu,K^p}^{\mathrm{pre}}(R, R^+)$ is independent of the choice of the pseudo-uniformizer $\varpi$.
\end{rmk}

\begin{dfn}\label{dfn:Igusa_points}
Let $(R,R^+)$ be a perfectoid pair over $\BZ_p$ with pseudo-uniformizer $\varpi$. We define the groupoid of points $\Igs_{G,\mu,K^p}(R,R^+)$ to be $\Igs_{G,\mu,K^p}(R^+/\varpi)$. In this way, we consider the Igusa stack $\Igs_{G,\mu,K^p}$ as a $v$-stack over $\Spd(\BZ_p)$.
\end{dfn}

\begin{rmk}\label{rmk:reduction_map}
If the Shimura variety $\Sh_{K}(G,\mu)$ is proper, it identifies with the adic generic fiber of the formal scheme $\CS_K$. Consequently, we obtain a reduction morphism $\red \colon \CS_K \to \Igs_{G,\mu,K^p}$ which sends a point of $\CS_K$ to its reduction modulo $p$.
\end{rmk}

\begin{construction}\label{const:pi_crys}
There exists a map $\bar{\pi}_{\mathrm{Crys}}^\diamondsuit \colon \Igs_{G,\mu,K^p} \to \Bun_G$. It suffices to define this map for the prestack $\Igs_{G,\mu}^{\mathrm{pre}}$. Let $x \in \Igs_{G,\mu}^{\mathrm{pre}}(R,R^+)$. Let $y$ be a point of $\CS(R,R^+)$ that lifts $x$ and let $\mathcal{P}_y$ be the associated shtuka. Consider the underlying shtuka with no leg $\CP_{y,\infty}$, which defines a $G$-bundle on the Fargues--Fontaine curve. A formal quasi-isogeny of $G$-Shtukas $\mathcal{P}_{y} \to \mathcal{P}_{y'}$ induces an isomorphism between the $\CP_{y,\infty}$ and $\CP_{y',\infty}$. Therefore, we obtain a well-defined point of $\Bun_G$.
\end{construction}
We begin by recalling the diagram from~\ref{diagramreducedfiber}. The functor $\Flex$ respects quasi-isogenies, and therefore induces a well-defined map on the corresponding stacks:
\[
\Igs_{G,\mu} \to \Igs_{\tilde{G},\tilde{\mu}}.
\]

\begin{lem}\label{fromshtukatodisplay1}
Let $\mathcal{D}_0$ be a $(G,\mu)$-display over an integral perfectoid ring $R^+$, and let $\mathcal{P}_0 = \Fib(\mathcal{D}_0)$ be its associated shtuka. Suppose $\mathcal{P}$ is another shtuka bounded by $\mu$ such that there exists an isomorphism
\[
\iota: \mathcal{P}_{0,[r,\infty]} \cong \mathcal{P}_{[r,\infty]}
\]
for some real number $r>0$. Then $\mathcal{P}$ is the shtuka associated to a $(G,\mu)$-display $\mathcal{D}$.
\end{lem}

\begin{proof}
The construction of the display $\mathcal{D}$ is equivalent to producing a $G$-torsor over $A_{\inf}(R^+)$. We construct this torsor by gluing. Specifically, we take the $G$-torsor associated to $\mathcal{D}_0$ over the space $\Spa A_{\inf}(R^+)/Y_{[r,\infty]}$ and glue it to the $G$-torsor associated to $\mathcal{P}$ over $Y_{[r,0)}$ using the isomorphism $\iota$ as the transition map.
\end{proof}

\begin{lem}\label{fromshtukatodisplay}
Let $i \colon G \to \tilde{G}$ be a closed immersion of reductive groups. 
Let $\mathcal{D}$ be a point of $\Sht_{G,\mu}$ over a product of points, 
such that $\pi_{\mathrm{Crys}(\mathcal{D})}$ lifts to a point of $\CB_{\tilde{G},\mu}^{\perf}$. 
Then $\mathcal{D}$ arises from a point of $\CB_{\tilde{G},\mu}^{\perf}$.
\end{lem}

\begin{proof}
It suffices to lift the underlying $G$-torsor of the display $\mathcal{D}$ over $\mathcal{Y}$ 
to a $G$-torsor over $\mathbb{A}_{\inf}$. 

To give a $G$-torsor over $\mathbb{A}_{\inf}$, it is enough to give a $\tilde{G}$-torsor 
over $\mathbb{A}_{\inf}$ together with suitable tensors cutting out a reduction to $G$. 

In our situation, we are given a $\tilde{G}$-torsor over $\mathbb{A}_{\inf}$ and 
compatible tensors over $\mathcal{Y}$. By the fully faithful functor from 
$G$-torsors over $\mathbb{A}_{\inf}$ to $G$-torsors over $\mathcal{Y}$, 
these tensors extend uniquely to $\mathbb{A}_{\inf}$. 

This produces the desired $G$-torsor over $\mathbb{A}_{\inf}$, hence the claim.
\end{proof}

From now on, we concentrate on the meta-unitary Shimura variety. Let $(G,\mu)$ be a meta-unitary Shimura datum, and let $(\tilde{G},\mu^{\triangleright})$ be the auxiliary PEL-type unitary Shimura datum introduced in Section~\ref{meta-unitaryShimurasection}. Recall that there is a closed immersion $G \hookrightarrow \tilde{G}$.

\begin{lem}\label{rmk:v_cover}
The natural map $\bar{S}_{G,\mu^\triangleright,K}\to \Igs_{G,\mu,K^p}$ is a $v$ cover, and for every point of $x$ of the Igusa stack over a product of points, the fiber over $x$ consists of displays $\CP_y$ such that $\CP_{y,\infty}= \pi_{\mathrm{Crys}}(x)$.
\end{lem}
\begin{proof}
We know that this is true for the map $\bar{S}_{\tilde{G},\mu^\triangleright,K}\to \Igs_{\tilde{G},\mu^\triangleright,K^p}$ by \cite{daniels2024Igusa}. 

Consider a point $\bar{x}$ of the stack $\Igs_{G,\mu}$ over the product of points. One can extend $\Flex \bar{x}$ to a point $\Flex x$ of $\bar{S}_{\tilde{G},\mu^\triangleright,K}$. 
By proposition \ref{existanceoflattice}, one can lift $\bar{\pi}_{\textup{Crys}}(\bar{x})$ to a point $\CD$ of $\Sht_{G,\mu}$. The pair $\Flex x$ and $\CD$ will give us a point of $\bar{S}_{G,\mu,K}$.
\end{proof}
\begin{lem}\label{functorial}
The natural map $\Igs_{G,\mu}\to \Igs_{\tilde{G},\mu^\triangleright}$ is a closed immersion.
\end{lem}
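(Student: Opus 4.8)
The plan is to reproduce the argument of Proposition~\ref{closedimmersionlocalIgusastack} in the global setting, replacing the representability statements used there by $v$-descent of closed immersions. The starting observation is that the Igusa stack $\Igs_{G,\mu,K^p}$ depends on the integral model only through its reductions modulo powers of $p$, i.e.\ through $\bar{\CS}_{K^p}$ together with the induced map $\bar{\CS}_{K^p}\to\bar{\CB}_{G,\mu}$, and likewise $\Igs_{\tilde G,\mu^\triangleright,K^p}$ depends only on $\CS^{\textup{tan}}_{K^p}$ and its map to $\bar{\CB}_{\tilde G,\mu^\triangleright}^{\textup{tan}}$. Hence the commuting square~\ref{diagramreducedfiber} produces a morphism of prestacks $\Igs^{\textup{pre}}_{G,\mu,K^p}\to\Igs^{\textup{pre}}_{\tilde G,\mu^\triangleright,K^p}$, and $v$-stackifying gives the morphism in the statement.

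First I would check that this morphism is a monomorphism of $v$-stacks. On objects this is immediate: since $\Flex^{\textup{tan}}$ is representable by a closed immersion, its base change $\bar{\CS}_{K^p}\to\CS^{\textup{tan}}_{K^p}$ along~\ref{diagramreducedfiber} is a closed immersion, in particular a monomorphism. On morphisms — that is, on formal quasi-isogenies — I would argue exactly as in Proposition~\ref{closedimmersionlocalIgusastack}: over an $\BF_p$-algebra a quasi-isogeny of $(G,\mu)$-displays is the same datum as a quasi-isogeny between their images under $\Flex^{\textup{tan}}$, by Proposition~\ref{pro:flex_isogeny} together with the full faithfulness of $\Flex^{\textup{tan}}$. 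Thus $\Igs^{\textup{pre}}_{G,\mu,K^p}\to\Igs^{\textup{pre}}_{\tilde G,\mu^\triangleright,K^p}$ is fully faithful, and a fully faithful morphism of prestacks stackifies to a monomorphism.

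The heart of the proof is closedness, which I would verify $v$-locally on the target. Let $T=\Spa(R,R^+)$ be an affinoid perfectoid equipped with a map $T\to\Igs_{\tilde G,\mu^\triangleright}$; after passing to a $v$-cover $T'\to T$ this map factors through the prestack, hence is given by a genuine point $\tilde x$ of $\CS^{\textup{tan}}_{K^p}$ over the relevant reduction of $T'$. Computing the fibre product at the level of prestacks and using the monomorphism just established, $\Igs_{G,\mu}\times_{\Igs_{\tilde G,\mu^\triangleright}}T'$ is the subfunctor of $T'$ parametrizing those points over which $\tilde x$ lifts to $\bar{\CS}_{K^p}$; by $2$-cartesianness of~\ref{diagramreducedfiber} this is exactly the locus where the associated point of $\bar{\CB}_{\tilde G,\mu^\triangleright}^{\textup{tan}}$ lands in the closed substack $\Flex^{\textup{tan}}(\bar{\CB}_{G,\mu})$, hence a closed subspace $Z'\subseteq T'$. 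Therefore $\Igs_{G,\mu}\times_{\Igs_{\tilde G,\mu^\triangleright}}T'\cong Z'\hookrightarrow T'$ is a closed immersion, and by $v$-descent so is $\Igs_{G,\mu}\times_{\Igs_{\tilde G,\mu^\triangleright}}T\to T$. Since this holds for every such $T$, the map $\Igs_{G,\mu}\to\Igs_{\tilde G,\mu^\triangleright}$ is a closed immersion.

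The step I expect to be delicate is the full faithfulness of $\Flex^{\textup{tan}}$ on quasi-isogenies used in the second paragraph: this is precisely what makes the prestack morphism full, and hence what ensures that the fibres of $\Igs_{G,\mu}\to\Igs_{\tilde G,\mu^\triangleright}$ carry no residual automorphisms which would obstruct identifying the fibre product with an honest closed subspace in the third paragraph. The remaining inputs — that the Igusa-stack construction only remembers the reduction data entering~\ref{diagramreducedfiber}, the computation of fibre products of $v$-stackified prestacks, and $v$-descent of closed immersions — are formal.
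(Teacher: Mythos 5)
Your argument is essentially the paper's: the paper deduces the lemma directly from the $2$-cartesian diagram defining $\bar{\CS}_{K^p}$, exactly as in Proposition~\ref{closedimmersionlocalIgusastack}, where the closed immersion and full faithfulness of $\Flex^{\textup{tan}}$ (together with Proposition~\ref{pro:flex_isogeny} to handle quasi-isogenies) give a closed monomorphism of Igusa stacks. Your additional bookkeeping — passing through the prestacks, checking fullness on formal quasi-isogenies, and verifying closedness $v$-locally via descent of closed immersions — is a correct fleshing-out of the same route.
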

\begin{proof}
Consider the cartesian diagram
\begin{center}
\begin{tikzcd}
\bar{S}_{G,\mu,K} \arrow[r]\arrow[d] &  \bar{S}_{\tilde{G},\mu^\triangleright,K} \arrow[d] \\
\Igs_{G,\mu,K^p} \arrow[r] & \Igs_{\tilde{G},\mu,K^p}
\end{tikzcd}
\end{center}
the vertical maps are $v$-covers and the map $\bar{S}_{G,\mu,K}\to \CS_{\tilde{G},\mu^\triangleright,K}$ is a closed immersion, therefore the map $\Igs_{G,\mu,K^p}\to \Igs_{\tilde{G},\mu,K^p}$ is also a closed immersion.
\end{proof}

\begin{thm}\label{pro:cartesian_diagram characteristic p}
If $(G,\mu)$ is a meta-unitary Shimura variety, then the diagram below is cartesian:
\begin{center}
\begin{tikzcd}
\bar{\mathcal{S}}_K^\diamondsuit \arrow{r} \arrow{d} & \CB^{\perf}_{G,\mu} \arrow{d} \\
\Igs_{G,\mu,K^p} \arrow{r} & \Bun_{G,\mu^{-1}}
\end{tikzcd}
\end{center}
\end{thm}

\begin{proof}
Let $F$ denote the fiber product of the diagram under consideration. We have a natural map
\[
\bar{\mathcal{S}}_{G,\mu} \to F,
\]
and we aim to show that it is an isomorphism. Since products of geometric points form a basis for the $v$-topology, it suffices to prove this claim after base changing to such a product.

We first establish surjectivity. Let a point of $F$ over a geometric point $(C,C^+)$ be given. Such a point consists of a tuple
\[
(A_0, \mathcal{D}_0, \mathcal{P}, \alpha, \rho),
\]
where $\rho$ is an isomorphism $\rho: \Fib(\mathcal{D}_0) \otimes_{C^+/w} O_C/w \to \mathcal{P} \otimes_{C^+/w} O_C/w$. From this data, we must construct a corresponding point of the meta-unitary Shimura variety $\bar{\mathcal{S}}_{G,\mu}$. This requires constructing a compatible pair consisting of a formal abelian scheme $\mathcal{A}$ with a $\tilde{G}$-structure and a trivialization of its Tate module, along with a $(G,\mu)$-display $\mathcal{D}$.

The existence of such a lift is guaranteed in principle by the fiber product conjecture for Hodge-type Shimura varieties, which ensures that the image of our data under the functor $\Flex$ lifts to a point of $\bar{\mathcal{S}}_{\tilde{G},\mu^\triangleright}$. We now make this construction explicit.

First, applying Lemma~\ref{fromshtukatodisplay1}, we obtain a $(G,\mu)$-display $\mathcal{D}$ from the shtuka $\mathcal{P}$. Second, the work of Zhang~\cite{zhang2023pel} provides a formal abelian scheme $\mathcal{A}$ (with its requisite $\tilde{G}$-structure) whose associated $p$-divisible group is isomorphic to $\Flex(\mathcal{D})$. The resulting pair $(\mathcal{A}, \mathcal{D})$ provides the required lift to $\bar{\mathcal{S}}_{G,\mu}$, confirming the surjectivity of the map on geometric points.

For full faithfulness, we again reduce to the corresponding statement for $\bar{\mathcal{S}}_{\tilde{G},\mu^\triangleright}$, where it is known. This follows from the fully faithful maps
\[
\CB^{\perf}_{G,\mu} \to \CB^{\perf}_{\tilde{G},\mu^\triangleright}
\quad \text{and} \quad
\Igs_{G,\mu} \to \Igs_{\tilde{G},\mu^\triangleright}.
\]
\end{proof}
\begin{thm}\label{thm:cartesian_diagram}
If $(G,\mu)$ be a meta-unitary Shimura variety, then the diagram below is two cartesian:
\begin{center}
\begin{tikzcd}
\CS_{K}^\diamondsuit \arrow[r, "\pi_{\mathrm{Crys}}"] \arrow[d, "\red"'] &  \Sht_{G,\mu} \arrow[d, "\Fib"] \\
\Igs_{G,\mu,K^p} \arrow[r, "\bar{\pi}_{\mathrm{Crys}}"] & \Bun_{G,\mu^{-1}}
\end{tikzcd}
\end{center}
\end{thm}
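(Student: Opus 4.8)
The plan is to exhibit the canonical comparison morphism
\[
\Phi\colon \CS_{K}^{\diamondsuit}\longrightarrow \Igs_{G,\mu,K^p}\times_{\Bun_{G,\mu^{-1}}}\Sht_{G,\mu},
\]
assembled from $\red$, $\pi_{\mathrm{Crys}}$ and the tautological $2$-isomorphism $\Fib\circ\pi_{\mathrm{Crys}}\cong\bar{\pi}_{\mathrm{Crys}}\circ\red$ provided by Construction \ref{const:pi_crys}, and to prove it is an equivalence of $v$-stacks over $\Spd\CO_E$ (with $\Igs_{G,\mu,K^p}$ and $\Bun_{G,\mu^{-1}}$ pulled back from $\Spd\BZ_p$, resp.\ $\Spd\BF_p$). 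Since $\CS_{K}^{\diamondsuit}$ and $\Sht_{G,\mu}$ are spatial diamonds (Theorem \ref{thm:shtuka_representability}) and the target is an Artin $v$-stack, it is enough to check that $\Phi$ induces an equivalence of groupoids on every affinoid perfectoid test object $(R,R^+)$. By $v$-descent and Lemma \ref{rmk:v_cover} --- which says $\bar{S}_K\to\Igs_{G,\mu,K^p}$ is a $v$-cover --- together with Definition \ref{dfn:Igusa_points} (so that $\Igs_{G,\mu,K^p}(R,R^+)=\Igs_{G,\mu,K^p}(R^+/\varpi)$, independently of $\varpi$ by Remark \ref{rmk:indep_uniformizer}), we may moreover assume that the given point of $\Igs_{G,\mu,K^p}$ is represented by an honest mod-$p$ point $\bar{y}\in\CS(R^+/\varpi)$, well-defined up to formal quasi-isogeny.

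For \emph{essential surjectivity}, take $(x,s,\beta)$ in the fibre product, with $x$ lifted to $\bar{y}$ as above. The $(G,\mu)$-shtuka $s$ over $(R,R^+)$ has underlying $G$-bundle at infinity $\CP_{s,\infty}=\Fib(s)$, which $\beta$ identifies with $\bar{\pi}_{\mathrm{Crys}}(x)$, i.e.\ with the $G$-bundle attached to the display of $\bar{y}$. Using that the Igusa stack is insensitive to quasi-isogeny --- which do not change the bundle at infinity --- together with the rigidity of quasi-isogenies (Proposition \ref{quasi-isogeny over nilpotent thickening}) and the extension property assumed in the setup, we may adjust $\bar{y}$ within its quasi-isogeny class so that its mod-$\varpi$ shtuka coincides with the reduction of $s$. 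Now the Serre--Tate lifting property (Example \ref{example:integral_model_prop}), i.e.\ the formal \'etaleness of $\pi_{\mathrm{Crys}}\colon\CS\to\Sht_{G,\mu}$, combined with $\Sht_{\CG}^{\mathrm{red}}=\Sht_{\CG}^{\mathrm{loc}}$ and the display description of Remark \ref{rmk:B_perf_embedding}, shows that a point of $\CS_{K}^{\diamondsuit}(R,R^+)$ is precisely its reduction over $R^+/\varpi$ together with a compatible $(G,\mu)$-shtuka lifting the shtuka of that reduction; hence $s$ and the normalised $\bar{y}$ glue to a unique $y\in\CS_{K}^{\diamondsuit}(R,R^+)$ with $\red(y)\cong x$ and $\pi_{\mathrm{Crys}}(y)\cong s$, and one checks the induced $2$-isomorphism is $\beta$. \emph{Full faithfulness} is of the same flavour: an isomorphism $\Phi(y)\cong\Phi(y')$ is a formal quasi-isogeny $\bar{y}\to\bar{y}'$ together with an isomorphism $\mathcal{P}_y\cong\mathcal{P}_{y'}$ matching at infinity; the shtuka isomorphism is unique since $\Sht_{G,\mu}$ is a diamond, and feeding it through the extension property and Proposition \ref{quasi-isogeny over nilpotent thickening} forces $y=y'$ and trivialises all automorphisms, identifying $\CS_{K}^{\diamondsuit}(R,R^+)$ with the (hence discrete) fibre-product groupoid.

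The only places where properness and the meta-unitary hypothesis enter are the inputs just used: the existence of B\"ultel's integral model $\CS_K$ with its crystalline period morphism and Serre--Tate / extension property (Theorem \ref{mainproperties}, Example \ref{example:integral_model_prop}), and the $v$-cover statement Lemma \ref{rmk:v_cover}. The latter was deduced from the Hodge-type case of \cite{daniels2024Igusa} applied to the ambient PEL datum $(\tilde{G},\mu^{\triangleright})$, via the $\Flex$-diagram \ref{diagramreducedfiber} and the closed immersions $\Igs_{G,\mu}\hookrightarrow\Igs_{\tilde{G},\mu^{\triangleright}}$ (Lemma \ref{functorial}) and $\Sht_{G,\mu}\hookrightarrow\Sht_{\tilde{G},\mu^{\triangleright}}$ (Remark \ref{rmk:B_perf_embedding}, Theorem \ref{thm:flex_immersion}); so no further appeal to the abelian-type hypothesis is needed.

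I expect the main obstacle to be the matching step inside essential surjectivity: a priori $\beta$ only constrains the $G$-bundle at infinity, whereas reconstructing $y$ requires identifying the \emph{full} mod-$p$ shtuka of $\bar{y}$ with the reduction of $s$. The resolution is to pass to the display side through Remark \ref{rmk:B_perf_embedding}: a $(G,\mu)$-shtuka is a $(G,\mu)$-display, whose underlying isocrystal --- equivalently, bundle at infinity --- is rigid under isogeny by Proposition \ref{quasi-isogeny over nilpotent thickening}; so the bundle at infinity together with the shtuka structure already pins down the quasi-isogeny class of $\bar{y}$, and the Serre--Tate lift then determines $y$. A secondary nuisance is the descent bookkeeping of the first paragraph --- choice of base point, of pseudo-uniformizer, and passage along the $v$-cover $\bar{S}_K$ --- which is handled by Remarks \ref{frobeniusisidentity}, \ref{rmk:indep_uniformizer} and Lemma \ref{rmk:v_cover}.
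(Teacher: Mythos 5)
Your overall route is different from the paper's: the paper proves the theorem by pasting two Cartesian squares --- (i) $\CS_{K}^{\diamondsuit}\simeq \bar{\CS}_{K}^{\diamondsuit}\times_{\bar{\CB}_{G,\mu}}\CB_{G,\mu}$, which is built into B\"ultel's construction (the integral model is \emph{defined} as the unique lift of $(\bar{\CS}_{K},D)$, i.e.\ by exactly this square, with $\CB_{G,\mu}\simeq\Sht_{G,\mu}$ via Remark \ref{rmk:B_perf_embedding}), and (ii) $\bar{\CS}_{K}^{\diamondsuit}\simeq \Igs_{G,\mu,K^p}\times_{\Bun_{G,\mu^{-1}}}\CB_{G,\mu}$, which is inherited from the Hodge-type theorem of \cite{daniels2024Igusa} for $(\tilde{G},\mu^{\triangleright})$ through diagram \ref{diagramreducedfiber} and the closed immersions of Lemma \ref{functorial} --- whereas you attempt a direct verification on affinoid perfectoid points. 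The Serre--Tate/deformation part of your argument is in effect a re-derivation of square (i), and is acceptable in spirit (modulo the algebraization step from $R^{\sharp+}/\varpi^n$ to $R^{\sharp+}$, which properness of $\CS_K$ handles).

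The genuine gap is your ``matching step'', which is precisely square (ii) and is the nontrivial half of the theorem. What you need there is: given $\bar{y}\in\bar{\CS}_{K}(R^+/\varpi)$ and an isomorphism $\beta$ in $\Bun_{G,\mu^{-1}}(R,R^+)$ between $\Fib(s)$ and the bundle attached to $\bar{y}$, there exists (and, for full faithfulness, is unique up to unique formal quasi-isogeny) a point $\bar{y}'\in\bar{\CS}_{K}(R^+/\varpi)$, formally quasi-isogenous to $\bar{y}$, whose display is exactly the reduction of $s$ --- and this over an arbitrary affinoid perfectoid base. None of the results you invoke supplies this: Proposition \ref{quasi-isogeny over nilpotent thickening} only lifts quasi-isogenies along nilpotent thickenings; the extension property in the setup of Section 5 concerns $\diamondsuit$-points of $\CS_K$ and their prime-to-$p$ local systems, not the adjustment of mod-$\varpi$ points within a quasi-isogeny class; and Lemma \ref{rmk:v_cover} only asserts that $\bar{S}_K\to\Igs_{G,\mu,K^p}$ is a $v$-cover and describes its fibers over products of points. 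Moreover, $\beta$ lives in $\Bun_{G,\mu^{-1}}(R,R^+)$, and promoting it to a formal quasi-isogeny of displays over $R^+/\varpi$ already requires the identification $\Bun_G^{\red}\simeq\Isoc_G$ in families, one of the delicate points treated in \cite{daniels2024Igusa}. (For essential surjectivity one may legitimately reduce to products of points, where Proposition \ref{existanceoflattice} and Lemma \ref{descriptionofpoints} apply, but the uniqueness needed for full faithfulness must still be checked over general bases.) As written, then, your proposal assumes the special-fibre Cartesian square rather than proving it; the paper's proof sidesteps exactly this by importing that square from the Hodge-type case via the $\Flex$ closed immersions.
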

\begin{proof}
Consider the cartesian diagram
\begin{center}
\begin{tikzcd}
\mathcal{S}_K^\diamondsuit \arrow{r} \arrow{d} &
\CB_{G,\mu} \arrow{d} \\
\bar{\mathcal{S}}_K^\diamondsuit \arrow{r} & \bar{\CB}_{G,\mu}
\end{tikzcd}
\end{center}
and combine it with the Cartesian diagram:
\begin{center}
\begin{tikzcd}
\bar{\mathcal{S}}_K^\diamondsuit \arrow{r} \arrow{d} &
\CB_{G,\mu} \arrow{d} \\
\Igs_{G,\mu,K^p} \arrow{r} & \Bun_{G,\mu^{-1}}
\end{tikzcd}
\end{center}
\end{proof}
\begin{rmk}\label{rmk:generic_fiber_cartesian}
Over the generic fiber, we have the cartesian diagram
\begin{center}
\begin{tikzcd}
\Sht_{G,\mu} \arrow[r, "\pi_{HT}"] \arrow[d, "\red"'] &  {[\Gr_{G,\mu^{-1}}/K^p]} \arrow[d, "\text{BL}"] \\
\bar{\Sht}_{G,\mu} \arrow[r, "\bar{\pi}_{\mathrm{Crys}}"] & \Bun_{G,\mu^{-1}}
\end{tikzcd}
\end{center}
Therefore we have the cartesian diagram at the hyperspecial level:
\begin{center}
\begin{tikzcd}
\Sh(G,\mu)^\diamondsuit_K \arrow[r, "\pi_{HT}"] \arrow[d, "\red"'] &  {[\Gr_{G,\mu^{-1}}/K_p]} \arrow[d, "\text{BL}"] \\
\Igs_{G,\mu,K^p} \arrow[r, "\bar{\pi}_{\mathrm{Crys}}"] & \Bun_{G,\mu^{-1}}
\end{tikzcd}
\end{center}
Now let $K'_p\subset K_p$ be an arbitrary level at $p$ and set $K'=K'_pK^p$, we have the cartesian diagram:
\begin{center}
    \begin{tikzcd}
\Sh(G,\mu)^\diamondsuit_{K'} \arrow[r, "\pi_{\mathrm{Crys}}"] \arrow[d, "\red"'] &  \Sht_{G,\mu,K'_p} \arrow[d,] \\
\Sh(G,\mu)^\diamondsuit_K \arrow[r, "\bar{\pi}_{\mathrm{Crys}}"] & \Sht_{K_p}
\end{tikzcd}
\end{center}
Therefore we get the cartesian diagram:
\begin{center}
\begin{tikzcd}
\Sh(G,\mu)^\diamondsuit_{K'} \arrow[r, "\pi_{HT}"] \arrow[d, "\red"'] &  {[\Gr_{G,\mu^{-1}}/K'_p]} \arrow[d, "\text{BL}"] \\
\Igs_{G,\mu,K^{p}} \arrow[r, "\bar{\pi}_{\mathrm{Crys}}"] & \Bun_{G,\mu^{-1}}
\end{tikzcd}
\end{center}
\end{rmk}
At every parahoric level $\CG$, we can also define $\CS_{\CG K^p}$ as the cartesian product:

\begin{center}
\begin{tikzcd}
\CS_{\CG K^p} \arrow[r, "\pi_{HT}"] \arrow[d, "\red"'] &  \Sht_{\CG} \arrow[d, ] \\
\Igs_{G,\mu,K^{p}} \arrow[r, "\bar{\pi}_{\mathrm{Crys}}"] & \Bun_{G,\mu^{-1}}
\end{tikzcd}
\end{center}

The definition of $\Igs$ involves a stackification, but proposition \ref{existanceoflattice} gives us a direct interpertation of the points over the product of fields:

\begin{lem}\label{descriptionofpoints}
If $S$ is a product of points then the map $\CS_{G,\mu,K^p}(S)$ to $\Igs_{K^p}(S)$ is essentially surjective.
\end{lem}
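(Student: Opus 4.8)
The plan is to read the lemma off the Cartesian description of $\CS_{G,\mu,K^p}^\diamondsuit$ obtained in Theorem~\ref{thm:cartesian_diagram}, combined with the existence of lattices (Proposition~\ref{existanceoflattice}) and the special structure of products of points. First I would use that the square of Theorem~\ref{thm:cartesian_diagram} is $2$-Cartesian, so that for any perfectoid $S$ there is an equivalence of groupoids
\[
\CS_{G,\mu,K^p}^\diamondsuit(S)\;\simeq\;\Igs_{G,\mu,K^p}(S)\times_{\Bun_{G,\mu^{-1}}(S)}\Sht_{G,\mu}(S),
\]
under which the reduction map $\red$ corresponds to the first projection. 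Consequently, given $x\in\Igs_{G,\mu,K^p}(S)$, to exhibit a $z\in\CS_{G,\mu,K^p}^\diamondsuit(S)$ with $\red(z)\simeq x$ it suffices to find $y\in\Sht_{G,\mu}(S)$ together with an isomorphism $\Fib(y)\simeq\bar\pi_{\mathrm{Crys}}(x)$ in $\Bun_{G,\mu^{-1}}(S)$: the pair $(x,y)$ and this isomorphism supply the desired $z$, and its image under the first projection is exactly $x$. So the statement reduces to showing that, for $S$ a product of points, every object of $\Bun_{G,\mu^{-1}}(S)$ of the form $\bar\pi_{\mathrm{Crys}}(x)$ lies in the essential image of $\Fib$.

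Next I would bring in the hypothesis that $S$ is a product of points. Such an $S$ is reduced, so any object of $\Bun_{G,\mu^{-1}}(S)$ factors through $\Bun_G^{\mathrm{red}}\simeq\Isoc_G$; since moreover $|S|$ is totally disconnected, $\bar\pi_{\mathrm{Crys}}(x)$ is classified by a locally constant function $|S|\to B(G,\mu^{-1})$. Decomposing $S$ into finitely many clopen pieces, I may assume this function is constant, equal to some $b\in B(G,\mu^{-1})$, i.e.\ $\bar\pi_{\mathrm{Crys}}(x)\simeq\CE_b$. Now I would invoke Proposition~\ref{existanceoflattice}: the morphism $\Sht_{G,\mu}\to\Isoc_{G,\mu^{-1}}\simeq\Bun_{G,\mu^{-1}}^{\mathrm{red}}$ is v-surjective. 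Unwinding this through the Witt-vector picture $\Sht_{\CG}^{\mathrm{red}}\simeq\Sht_{\CG}^{\mathrm{loc}}=L^{\textup{witt}}G/\Ad_\sigma L^{+,\textup{witt}}\CG$, along which the Newton morphism $\Nt$ to $\Isoc_G=L^{\textup{witt}}G/\Ad_\sigma L^{\textup{witt}}G$ is a further quotient, lifting $\CE_b$ along $\Fib$ amounts to equipping the isocrystal attached to $b$ with a $\CG$-lattice whose relative position is bounded by $\mu$; such a lattice exists precisely because $b\in B(G,\mu^{-1})$. It then remains to make this choice over all of $S$, which is where the structure of $S$ is used: the fibre of $\Fib$ over $\CE_b$ is, after the $\tilde G_b$-quotient, a nonempty perfect scheme of essentially finite type over $\overline{\BF}_p$ (cf.\ Corollary~\ref{represntabilityIgusastack} and the local constructions of Section~4), hence admits an $\overline{\BF}_p$-point and therefore receives a constant map from the product of points $S$. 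This produces the required $y$, hence $z$, and completes the proof.

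The hard part will be this last step: upgrading the v-surjectivity of Proposition~\ref{existanceoflattice} to a genuine lift over $S$ rather than over some v-cover of $S$, since a v-cover of a product of points need not split. This is exactly the point where the hypothesis that $S$ is a product of points is indispensable: one exploits that the fibre of $\Fib$ over $\CE_b$ is nonempty \emph{and} a sufficiently rigid (perfect, essentially finite type) object, so that nonemptiness over $\overline{\BF}_p$ already forces an $S$-point. The remaining ingredients — local constancy of the Newton invariant on a product of points, and the automatic matching of the prime-to-$p$ level data carried by $x$ through the Cartesian square — are formal, being forced by the $2$-fibre product description of $\CS_{G,\mu,K^p}^\diamondsuit(S)$.
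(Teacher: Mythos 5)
Your opening reduction is fine and is in the spirit of what the paper intends: by the $2$-Cartesian square of Theorem~\ref{thm:cartesian_diagram}, essential surjectivity of $\CS_{G,\mu,K^p}(S)\to\Igs_{K^p}(S)$ comes down to producing, for each $x\in\Igs_{K^p}(S)$, a point $y\in\Sht_{G,\mu}(S)$ with $\Fib(y)\simeq\bar\pi_{\mathrm{Crys}}(x)$; the paper's own (essentially one-line) proof likewise rests on Proposition~\ref{existanceoflattice}, i.e.\ on the existence of a lattice/shtuka structure on the relevant isocrystal. The problem is that your argument for this lifting step does not work. First, the assertion that, because $S$ is a product of points, $\bar\pi_{\mathrm{Crys}}(x)$ ``factors through $\Bun_G^{\mathrm{red}}\simeq\Isoc_G$ since $S$ is reduced'' misuses Gleason's theorem: the reduction $\Bun_G^{\red}$ is not about reducedness of the test ring, and a product of points is an analytic perfectoid space, not a perfect scheme. (The image of $\bar\pi_{\mathrm{Crys}}$ does come from isocrystals, but by Construction~\ref{const:pi_crys}, not by this reducedness argument.) Second, the Newton point of a family of bundles/isocrystals over a product of points is only semicontinuous, not locally constant: the factors of $S$ are not clopen and $|S|$ has limit points where the stratum can jump, so you cannot decompose $S$ into finitely many clopen pieces on which $\bar\pi_{\mathrm{Crys}}(x)\simeq\CE_b$.

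The more serious gap is the final step. Even on a piece where every geometric fibre is isomorphic to $\CE_b$, the family $\bar\pi_{\mathrm{Crys}}(x)$ is a map $S\to\Bun_G^b\simeq[*/\tilde G_b]$, i.e.\ a possibly nontrivial $\tilde G_b$-torsor; it is not the constant family. Picking one $\overline{\BF}_p$-point of the fibre of $\Fib$ over the base point $\CE_b$ and mapping $S$ to it constantly only lifts the \emph{constant} map, not the given $x$; what you actually need is that the twisted form $\CM^{\mathrm{int}}_{G,\mu,b}\times^{\tilde G_b}\CT_x$ has an $S$-point (equivalently, that the given isocrystal family over $S$ admits a $\CG$-lattice bounded by $\mu$ over all of $S$). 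This is exactly the content of Proposition~\ref{existanceoflattice} as proved in \cite{daniels2024Igusa}, whose proof is carried out over products of points precisely to avoid the issue you flag about v-surjectivity only giving lifts after a cover; your attempt to re-derive it from abstract v-surjectivity plus nonemptiness of a fibre over $\overline{\BF}_p$ (and a misattribution of Corollary~\ref{represntabilityIgusastack}, which concerns the local Igusa variety, not the fibres of $\Fib$) does not close this gap. The correct argument is to quote the products-of-points form of the lattice-existence statement directly and feed it into your Cartesian-square reduction.
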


\section{Some Cohomological Consequences}
The validation of the Fiber Product Conjecture provides the necessary framework to deduce cohomological results, a strategy previously employed by \cite{zhang2023pel} and \cite{daniels2024Igusa}. Consequently, we obtain these cohomological consequences for meta-unitary Shimura varieties.

\begin{cor}\label{cor:igs_properties}
Let $(G,\mu)$ be a meta-unitary Shimura datum.
\begin{enumerate}
    \item The v-stack $\Igs_{G,\mu,K^p}$ is a $\ell$-cohomologically smooth small Artin v-stack of dimesnion $0$ and with dualizing sheaf $\Lambda[0]$.
    \item The morphism $\bar{\pi}_{\mathrm{Crys}}:\Igs_{G,\mu,K^p}\to \Bun_G$ is seperated, represenatble in spatial diamonds, compactifiable of finite transcendental dimension.
    \item for any ring of coefficients $\Lambda$ with $n\Lambda=0$, $\gcd(n,p)=1$, we have a natural equivalence $$BL^*\bar{\pi}_{Crys,*}\longrightarrow \pi_{HT,*}\red^*$$ as functors from $\CD(\Igs_{G,\mu,K^p},\Lambda)$ to $\CD(\Gr_{G,\mu^{-1},K^p},\Lambda)$.
    \item The complex $\bar{\pi}_{\mathrm{Crys},*}\Lambda$ lies in $\CD^{\ULA}(\Bun_G)$
\end{enumerate}
\end{cor}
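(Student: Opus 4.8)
The plan is to check each of the four assertions Newton stratum by Newton stratum on $\Bun_G$ and then glue, using throughout that the whole situation is pulled back, along the closed immersion of Lemma~\ref{functorial}, from the Hodge-type (PEL-unitary) datum $(\tilde G,\mu^{\triangleright})$ for which \cite{daniels2024Igusa} already establishes the analogous statements. Recall $\Bun_G=\coprod_{b\in B(G)}\Bun_G^b$ with $\Bun_G^b\simeq[\ast/\tilde G_b]$, and that pulling this stratification back along $\bar\pi_{\mathrm{Crys}}$ (equivalently, using the Newton stratification of $\bar{\CS}_K$ together with Lemma~\ref{rmk:v_cover}) gives $\Igs_{G,\mu,K^p}=\coprod_b\Igs^b$. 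By the geometric Mantovan description (Corollary~\ref{mantovanformulegeometricintro}) one has $\Igs^b\simeq[\Ig_b/\tilde G_b]$, where $\Ig_b$ is the global Igusa variety attached to $b$ and $K^p$; by Corollary~\ref{represntabilityIgusastack} it is representable by a perfect $\overline{\BF}_p$-scheme, in fact the perfection of a smooth scheme since central leaves are smooth, so its associated (qcqs, separated) diamond is $\ell$-cohomologically smooth over $\Spd\overline{\BF}_p$ with dualizing complex $\Lambda[0]$.

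For (1) I would factor $\Igs^b\to\Spd\overline{\BF}_p$ through $[\ast/\tilde G_b]$: the first map, after the $v$-cover $\ast\to[\ast/\tilde G_b]$, becomes $\Ig_b\to\ast$, which is $\ell$-cohomologically smooth with dualizing complex $\Lambda[0]$; the second is $\ell$-cohomologically smooth with dualizing complex $\Lambda[0]$ by the results of Fargues--Scholze, since $\tilde G_b$ is an extension of the locally profinite group $G_b(\BQ_p)$ by a positive (Banach--Colmez type) unipotent group and $\ell\neq p$. The two dimension contributions are equal and opposite — this is exactly the numerical balance underlying the product formula of Corollary~\ref{mantovanformulegeometricintro} — so each $\Igs^b$ is $\ell$-cohomologically smooth of dimension $0$ with dualizing complex $\Lambda[0]$; since $\ell$-cohomological smoothness of a fixed dimension and the value of the dualizing complex can be detected on the members of a locally closed stratification, the same holds for $\Igs_{G,\mu,K^p}$. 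That it is a small Artin $v$-stack follows from the $v$-cover $\bar{\CS}_K\to\Igs_{G,\mu,K^p}$ of Lemma~\ref{rmk:v_cover} together with the fact that it is $v$-locally a quotient of a spatial diamond by groups of the above type, the descent to $\Spd\BF_p$ being handled by Remark~\ref{frobeniusisidentity}. For (2): over $\Bun_G^b$ the morphism $\bar\pi_{\mathrm{Crys}}$ is the $\Ig_b$-fibered map $[\Ig_b/\tilde G_b]\to[\ast/\tilde G_b]$, hence separated, representable in spatial diamonds, and compactifiable of finite transcendence degree because $\Ig_b$ is a qcqs separated perfectoid space; these properties are $v$-local on the target and stable under dévissage along the Newton stratification, so $\bar\pi_{\mathrm{Crys}}$ inherits them globally.

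For (3) I would invoke base change along the Cartesian square of Remark~\ref{rmk:generic_fiber_cartesian} relating $\Sh_K(G,\mu)^\diamondsuit$, $[\Gr_{G,\mu^{-1}}/K_p]$, $\Igs_{G,\mu,K^p}$ and $\Bun_{G,\mu^{-1}}$, with $\red$ and $\bar\pi_{\mathrm{Crys}}$ on the left and bottom and $\pi_{\mathrm{HT}}$ and $\mathrm{BL}$ on the top and right. Since $\mathrm{BL}$ is $\ell$-cohomologically smooth, smooth base change yields the desired natural equivalence $\mathrm{BL}^{\ast}\bar\pi_{\mathrm{Crys},\ast}\xrightarrow{\ \sim\ }\pi_{\mathrm{HT},\ast}\red^{\ast}$ on $\CD(\Igs_{G,\mu,K^p},\Lambda)$; one may equally use that $\bar\pi_{\mathrm{Crys}}$ is compactifiable by (2) and the base change theorem. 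The statement at prime-to-$p$ level (with $\Gr_{G,\mu^{-1},K^p}$) then follows by passing to the limit over $K_p$ exactly as in \cite{daniels2024Igusa}.

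Finally, for (4): by (1) the constant sheaf $\Lambda$ on $\Igs_{G,\mu,K^p}$ is ULA over $\Spd\overline{\BF}_p$, and base change as in (3) identifies, under $\CD(\Bun_G^b)\simeq\CD(G_b(\BQ_p),\Lambda)$, the restriction $i_b^{\ast}\bar\pi_{\mathrm{Crys},\ast}\Lambda$ with $R\Gamma(\Ig_b,\Lambda)$ equipped with its smooth $G_b(\BQ_p)$-action. It then remains to see that this is a perfect complex of admissible $G_b(\BQ_p)$-representations, which is precisely the stratum-wise criterion characterising objects of $\CD^{\ULA}(\Bun_G)$. I expect this last step to be the main obstacle: it amounts to the finiteness and admissibility of the cohomology of the (non-proper) Igusa varieties $\Ig_b$, and this is where the properness of the meta-unitary Shimura variety, together with the reduction to the Hodge-type datum $(\tilde G,\mu^{\triangleright})$ via Lemma~\ref{functorial}, must be used; granting it, the argument of \cite{daniels2024Igusa} transposes verbatim.
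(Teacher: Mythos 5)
There is a genuine gap in how you handle parts (1) and (2): you reduce to the Newton strata and then claim that $\ell$-cohomological smoothness (with a fixed dimension and dualizing complex) and properties such as separatedness and representability in spatial diamonds ``can be detected on the members of a locally closed stratification'' / are ``stable under d\'evissage along the Newton stratification.'' This is false. Cohomological smoothness is smooth-local (or $v$-local) in the appropriate sense, but it is \emph{not} a stratification-local property: a non-smooth object can perfectly well have smooth locally closed strata (a nodal curve is the standard example), and likewise a non-separated map can be separated over each stratum of the target (the line with doubled origin over $\mathbb{A}^1$). Since the Newton strata of $\Bun_G$ do not form a $v$-cover, knowing that $\Igs^b\simeq[\Ig_b/\tilde G_b]$ is smooth of dimension $0$ for every $b$, and that $[\Ig_b/\tilde G_b]\to[\ast/\tilde G_b]$ is nice for every $b$, does not yield (1) or (2). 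The argument the paper has in mind (following \cite{zhang2023pel} and \cite{daniels2024Igusa}) goes the other way around: one uses the Cartesian square of Theorem \ref{thm:cartesian_diagram} / Remark \ref{rmk:generic_fiber_cartesian} to identify the pullback of $\bar\pi_{\mathrm{Crys}}$ along the surjective, $\ell$-cohomologically smooth map $\mathrm{BL}\colon\Gr_{G,\mu^{-1}}\to\Bun_{G,\mu^{-1}}$ with $\pi_{\mathrm{HT}}\colon\Sh_{K^p}(G,\mu)^{\diamondsuit}\to\Gr_{G,\mu^{-1}}$, and then \emph{descends}: properties like separated, representable in spatial diamonds, compactifiable, finite $\dim.\mathrm{trg}$ are $v$-local on the target and hold for $\pi_{\mathrm{HT}}$ because the meta-unitary Shimura variety is a smooth proper rigid space; and cohomological smoothness of $\Igs_{G,\mu,K^p}$ (of dimension $\dim\Sh-\langle2\rho,\mu\rangle=0$, dualizing complex $\Lambda$) is deduced from smoothness of $\Sh_{K^p}^{\diamondsuit}$ by descent along this smooth cover, not by gluing strata. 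Your stratum-wise computation is only a consistency check, not a proof; note also the slip that you assert $\Ig_b^{\diamondsuit}$ has dualizing complex $\Lambda[0]$ while simultaneously invoking the cancellation of its dimension $\langle2\rho,\nu_b\rangle$ against that of $[\ast/\tilde G_b]$ --- the dualizing complex of (the diamond of) the perfection of a smooth scheme of dimension $d_b$ is $\Lambda(d_b)[2d_b]$, and it is only the quotient $[\Ig_b/\tilde G_b]$ that has trivial dualizing complex.

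Parts (3) and (4) of your proposal are essentially on track and match the intended route: (3) is cohomologically smooth (or proper) base change in the Cartesian square, using that $\mathrm{BL}$ is $\ell$-cohomologically smooth and that $\bar\pi_{\mathrm{Crys}}$ is qcqs/compactifiable from (2); and (4) reduces, via the stratum-wise ULA criterion on $\Bun_G$ and the identification $i_b^{\ast}\bar\pi_{\mathrm{Crys},\ast}\Lambda\simeq R\Gamma(\Ig_b,\Lambda)$, to admissibility and perfectness of Igusa-variety cohomology --- which you correctly flag as the remaining input, and which in this paper is obtained from the closed immersion of Lemma \ref{functorial} into the PEL-unitary Igusa stack (alternatively from properness of $\bar\pi_{\mathrm{Crys}}$ plus (1)). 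But as written, (3) and (4) lean on (1) and (2), so the stratification gap propagates; you should replace the d\'evissage by the descent-along-$\mathrm{BL}$ argument before the rest can stand.
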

\begin{proof}
The proofs are analogous to those presented in \cite{zhang2023pel} and \cite{daniels2024Igusa}. The arguments in \cite{zhang2023pel} rely primarily on fiber product formulas and general properties of shtukas and the stack $Bun_G$. In our setting, these arguments are simplified by the fact that the relevant Shimura varieties and Igusa stacks are compact.
\end{proof}

Consider the Newton stratum $\Bun_G^b$, and let $\Igs^b$ and $\Sht^b$ be the pullbacks to this stratum. Consider the local Shimura variety $\CM^{\textup{int}}_{G,\mu,b}$.

\begin{lem}\label{lem:torsors}
The natural maps $\Ig_b\to \Igs^b$ and $\CM^{\textup{int}}_{G,\mu,b} \to \Sht^b$ are $\tilde{G}_b$-torsors, and we have the identifications $\Igs^b=[\Ig_b/\tilde{G}_b]$ and $\Sht^b=[\CM^{\textup{int}}_{G,\mu,b}/\tilde{G}_b]$.
\end{lem}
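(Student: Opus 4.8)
The plan is to deduce both torsor assertions from the identification $\Bun_G^b \simeq [*/\tilde{G}_b]$ recorded in Definition~\ref{dfn:Bun_G}, after unwinding the definitions of the two uniformizing objects. The only formal input is the following: since $\Bun_G^b \simeq [*/\tilde{G}_b]$, the tautological point $x_b\colon * \to \Bun_G^b$ is a $\tilde{G}_b$-torsor, so for any small $v$-stack $\mathcal{X}$ equipped with a map to $\Bun_G^b$ the base change $\mathcal{X}\times_{\Bun_G^b} x_b \to \mathcal{X}$ is again a $\tilde{G}_b$-torsor, and by $v$-descent of torsors one recovers $\mathcal{X}\simeq [(\mathcal{X}\times_{\Bun_G^b}x_b)/\tilde{G}_b]$, the residual action being the one on the trivialization chosen over $x_b$. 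Thus it suffices to identify $\CM^{\mathrm{int}}_{G,\mu,b}$ with $\Sht^b\times_{\Bun_G^b}x_b$ (base change along $\pi$ of Definition~\ref{dfn:period_map}) and $\Ig_b$ with $\Igs^b\times_{\Bun_G^b}x_b$ (base change along $\bar{\pi}_{\mathrm{Crys}}$ of Corollary~\ref{cor:igs_properties}).

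For the local side this is essentially the definition. The restriction $\Sht^b\to\Bun_G^b$ of $\pi$ sends a shtuka $\CE$ to the $G$-bundle $\CE_\infty$ attached via Remark~\ref{rmk:shtuka_bundle} to the no-leg part $\CE_{[r,\infty]}$, and comparing with Definition~\ref{dfn:integral_local_shimura}, a point of $\CM^{\mathrm{int}}_{G,\mu,b}$ is precisely a shtuka bounded by $\mu$ together with the datum $\tau_r$, where $\tau_r$ is exactly a trivialization of $\CE_\infty$ against $\CE_b$. Hence $\CM^{\mathrm{int}}_{G,\mu,b}\simeq\Sht^b\times_{\Bun_G^b}x_b$ with $\tilde{G}_b$ acting by modifying $\tau_r$, and the formal input gives $\Sht^b=[\CM^{\mathrm{int}}_{G,\mu,b}/\tilde{G}_b]$.

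For the global side the analogous claim is that $\Ig_b\to\Igs^b$ fits into a Cartesian square with $x_b\colon *\to\Bun_G^b$ along $\bar{\pi}_{\mathrm{Crys}}$, i.e. $\Ig_b\simeq\Igs^b\times_{\Bun_G^b}x_b$. To see this I would base-change the Cartesian square of Theorem~\ref{thm:cartesian_diagram} along $\Bun_G^b\hookrightarrow\Bun_G$ and then along $x_b$, and then check that the resulting object is the pro-\'etale cover of the central leaf $\CC_b$ of Definition~\ref{definitionIgusastack}, with the residual $\tilde{G}_b$-action matching the one implicit there. Since $\Igs_{G,\mu,K^p}$ was built by stackification, the point-level bookkeeping here rests on Lemma~\ref{descriptionofpoints} together with the $v$-surjectivity of Proposition~\ref{existanceoflattice}, which let one represent a point of $\Ig_b$ over a product of points by a genuine point of $\CS$ together with an isocrystal (equivalently, $G$-bundle) trivialization. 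Alternatively, and avoiding re-proving anything, one can pull the closed immersions of Lemma~\ref{functorial} and Theorem~\ref{thm:flex_immersion} back to the stratum $b$ and reduce the entire statement to the PEL-type case treated in \cite{daniels2024Igusa}. Either way, combining $\Igs^b=[\Ig_b/\tilde{G}_b]$ and $\Sht^b=[\CM^{\mathrm{int}}_{G,\mu,b}/\tilde{G}_b]$ with Theorem~\ref{thm:cartesian_diagram} base-changed to $\Bun_G^b$ then yields the geometric Mantovan formula of Corollary~\ref{mantovanformulegeometricintro}.

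The main obstacle is the global identification $\Ig_b\simeq\Igs^b\times_{\Bun_G^b}x_b$: one must match the full $\tilde{G}_b$-torsor structure — including the unipotent part of $\tilde{G}_b$, which is invisible on the reduced and mod-$p$ level at which the Igusa stack is defined — with the structure carried by the intrinsic Igusa variety over the central leaf. This is exactly where the Cartesian square of Theorem~\ref{thm:cartesian_diagram} and the description of points of $\Igs$ over products of points are genuinely needed; granting that, both torsor assertions are pure $v$-descent.
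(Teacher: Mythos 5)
The paper states this lemma without proof (it is the step implicitly deferred to \cite{daniels2024Igusa}), so the comparison is necessarily with that intended argument; your overall strategy --- pull back along the canonical point $x_b\colon \ast\to \Bun_G^b\simeq[\ast/\tilde{G}_b]$, note that any such pullback is automatically a $\tilde{G}_b$-torsor with quotient the base, and then identify the two pullbacks with $\CM^{\textup{int}}_{G,\mu,b}$ and $\Ig_b$ --- is exactly the standard route, and your treatment of the local half is complete: an isomorphism $\tau_r$ over $\CY_{[r,\infty]}$ compatible with Frobenius is the same as an isomorphism $\CE_\infty\simeq\CE_b$ on the Fargues--Fontaine curve, so $\CM^{\textup{int}}_{G,\mu,b}\simeq \Sht^b\times_{\Bun_G^b}x_b$ as you say.

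The gap is in the global half, which is the actual content of the lemma, and you only gesture at it. The object $\Igs^b\times_{\Bun_G^b}x_b$ has, over a perfectoid test object $S$, fibers given by trivializations $\CE_b\simeq\bar\pi_{\mathrm{Crys}}(x)$ in $\Bun_G(S)$, i.e.\ a torsor under the full group diamond $\tilde{G}_b$, whereas $\Ig_b$ as defined in Definition~\ref{definitionIgusastack} parametrizes trivializations of the isocrystal and is there described as a pro-\'etale $G_b(\mathbb{Q}_p)$-torsor over the central leaf $\CC_b$. Identifying the two therefore requires (i) extending the $G_b(\mathbb{Q}_p)$-action on $\Ig_b$ to an action of all of $\tilde{G}_b$, by showing that automorphisms of $\CE_b$ over $S$ correspond to quasi-isogeny automorphisms of the display/isocrystal over $R^+/\varpi$ and that these act on points of $\Ig_b$ --- a Caraiani--Scholze/Hamacher--Kim type statement that is not a formal consequence of Theorem~\ref{thm:cartesian_diagram}; and (ii) an actual isomorphism of $v$-stacks, which cannot be checked merely on products of points, so Lemma~\ref{descriptionofpoints} and Proposition~\ref{existanceoflattice} are not sufficient on their own. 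Your fallback --- pulling back the closed immersions of Lemma~\ref{functorial} and Theorem~\ref{thm:flex_immersion} to reduce to the Hodge/PEL case of \cite{daniels2024Igusa} --- also does not formally close this: the groups $\tilde{G}_b$ for $G$ and for the ambient $\tilde{G}$ differ, so one must additionally show that $\Ig_{b,G}$ is the fiber product of $\Ig_{\tilde b,\tilde G}$ with $\Igs^b_{G}$ over $\Igs^{\tilde b}_{\tilde G}$ and that the torsor structure restricts along $\tilde{G}_b\hookrightarrow\tilde{\tilde{G}}_{\tilde b}$, which is a statement of the same nature as the one being proved. So the skeleton is right, but the step you yourself flag as the main obstacle is precisely what still needs an argument.
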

\begin{proof}
We briefly sketch how the arguments of \cite{daniels2024Igusa} can be adapted to our setting. First, the 2-Cartesian diagram in \cite[§4.3.2]{daniels2024Igusa} is a consequence of fiber product formulas. Moreover, \cite[Lemma 4.4.2]{daniels2024Igusa} is a general result that holds without any specific assumptions on the group $G$. The subsequent results, namely \cite[Lemmas 4.4.3, 4.4.5, 4.5.1, 4.5.3]{daniels2024Igusa}, are formal consequences of these facts.

The main ingredient for the proof of \cite[Lemma 8.5.2]{daniels2024Igusa} is \cite[Lemma 11.25]{zhang2023pel}, which relies on the fact that the group $\tilde{G}_b$ is isomorphic to the automorphism group of the associated $p$-divisible group. This property holds in our context. Finally, \cite[Lemma 8.5.3]{daniels2024Igusa} follows from \cite[Lemma 4.4.5]{daniels2024Igusa} combined with standard properties of the cohomology of diamonds and the stack $Bun_G$.
\end{proof}

Using this lemma, we obtain the geometric incarnation of the Mantovan formula for the meta-unitary Shimura varieties.

\begin{cor}\label{mantovanformulegeometric}
The integral Shimura variety for a meta-unitary Shimura datum admits a Newton stratification by $\CS_K^b$, and we have
\[ \CS_K^b = [\Ig_b/\tilde{G}_b] \times_{[*/\tilde{G}_b]} [\CM^{\textup{int}}_{G,\mu,b}/\tilde{G}_b]. \]
\end{cor}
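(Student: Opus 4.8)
The plan is to deduce the formula by restricting the $2$-Cartesian square of Theorem~\ref{thm:cartesian_diagram} to a single Newton stratum of $\Bun_G$ and then rewriting each corner via Lemma~\ref{lem:torsors}. First I would fix $b$ with $\Bun_G^b\subset\Bun_{G,\mu^{-1}}$, i.e.\ $b\in B(G,\mu)$, and observe that the composite $\CS_K^\diamondsuit\xrightarrow{\red}\Igs_{G,\mu,K^p}\xrightarrow{\bar\pi_{\mathrm{Crys}}}\Bun_G$ is, on underlying reduced stacks, the Newton point map attached to the universal $(G,\mu)$-display, via the identification $\Bun_G^{\mathrm{red}}\simeq\Isoc_G$ and the Newton morphism $\Nt$. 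Defining $\CS_K^b$ as the preimage of the locally closed substack $\Bun_G^b$ then gives the asserted Newton stratification $\CS_K=\coprod_b\CS_K^b$ of the integral Shimura variety, the (nonempty) strata being indexed by $B(G,\mu)$; this settles the first assertion.

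Next I would base change the square of Theorem~\ref{thm:cartesian_diagram} along $\Bun_G^b\hookrightarrow\Bun_{G,\mu^{-1}}$. Since both legs of that square target $\Bun_{G,\mu^{-1}}$, and since the preimage $\CS_K^b$ is precisely $\CS_K^\diamondsuit\times_{\Bun_{G,\mu^{-1}}}\Bun_G^b$, writing $\Igs^b$ and $\Sht^b$ for the two pullbacks to $\Bun_G^b$ as in Lemma~\ref{lem:torsors} produces a $2$-Cartesian square
\[
\begin{tikzcd}
\CS_K^b \arrow[r] \arrow[d] & \Sht^b \arrow[d] \\
\Igs^b \arrow[r] & \Bun_G^b.
\end{tikzcd}
\]
I would then substitute $\Igs^b=[\Ig_b/\tilde{G}_b]$ and $\Sht^b=[\CM^{\textup{int}}_{G,\mu,b}/\tilde{G}_b]$ from Lemma~\ref{lem:torsors}, together with $\Bun_G^b\simeq[*/\tilde{G}_b]$ from Definition~\ref{dfn:Bun_G}, noting that the two maps to $\Bun_G^b$ are exactly those induced by the respective $\tilde{G}_b$-torsor structures, to obtain
\[
\CS_K^b=[\Ig_b/\tilde{G}_b]\times_{[*/\tilde{G}_b]}[\CM^{\textup{int}}_{G,\mu,b}/\tilde{G}_b],
\]
which is the claim; equivalently $\CS_K^b=[(\Ig_b\times\CM^{\textup{int}}_{G,\mu,b})/\tilde{G}_b]$ by the standard description of a fibre product of quotient stacks over $[*/\tilde{G}_b]$.

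The hard part is not conceptual but a matter of matching up structures. One must be certain that $\Ig_b\to\Igs^b$, $\CM^{\textup{int}}_{G,\mu,b}\to\Sht^b$ and $*\to\Bun_G^b$ are torsors under \emph{one and the same} group $\tilde{G}_b$ and that the two downward maps to $\Bun_G^b$ are equivariant for the evident actions, so that the fibre product above is the naive one; this is exactly what Lemma~\ref{lem:torsors} provides, together with $\Bun_G^b\simeq[*/\tilde{G}_b]$. The one additional input, used to identify $\Sht^b$ with $[\CM^{\textup{int}}_{G,\mu,b}/\tilde{G}_b]$, is that the period map $\pi\colon\Sht_{G,\mu}\to\Bun_{G,\mu^{-1}}$ restricted over $\Bun_G^b$ is the structure map of the integral local Shimura variety, which I would read off from Definitions~\ref{dfn:integral_local_shimura} and~\ref{dfn:period_map} and Remark~\ref{rmk:shtuka_bundle}. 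Finally, to justify the first sentence of the statement I would check that the stratification defined here coincides with any a priori Newton stratification on $\CS_K$, which follows from the compatibility of $\pi_{\mathrm{Crys}}$ with $\red$ recorded in Theorem~\ref{thm:cartesian_diagram} and the isomorphism $\Bun_G^{\mathrm{red}}\simeq\Isoc_G$.
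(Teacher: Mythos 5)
Your argument is correct and follows the paper's own route: the paper likewise obtains this corollary by restricting the $2$-Cartesian square of Theorem~\ref{thm:cartesian_diagram} over the Newton stratum $\Bun_G^b$ and then substituting the identifications $\Igs^b=[\Ig_b/\tilde{G}_b]$, $\Sht^b=[\CM^{\textup{int}}_{G,\mu,b}/\tilde{G}_b]$ and $\Bun_G^b\simeq[*/\tilde{G}_b]$ from Lemma~\ref{lem:torsors}. Your extra care in checking that the torsor structures and equivariance match is a sound elaboration of the same proof rather than a different approach.
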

\begin{proof}
The proof of \cite[Theorem 8.5.7]{daniels2024Igusa} remains valid in our situation. It is a formal consequence of Lemma~\ref{lem:torsors} and general properties of the dualizing sheaf of $Bun_G$, which hold without any specific assumptions on the group $G$.
\end{proof}

Recall that by \cite{hamann2025dualizing} the dualizing complex on $Bun_{G,k}^b$ is isomorphic to $\delta_b^{-1}[-2d_b]$ where $\delta_b$ is a cocharcter and $d_p=<2\rho,v_b>$. The proof of the following theorems is exactly the same as \cite[Theorem 8.4.10 and Theorem 8.5.7]{daniels2024Igusa} because those proofs does not use that $(G,\mu)$ is of Hodge type:

\begin{thm}\label{heckeoperatorshimuracohomology}
There is a Hecke and Galios equivariant isomorphism $$\textup{R}\Gamma(\Sh_{K_p, E}, \Lambda)\simeq R h_{2,k,*} ( h_{1,k}^* A [d](d/2) ) $$
\end{thm}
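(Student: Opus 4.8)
The plan is to follow the strategy of Daniels--Van Hoften--Kim--Zhang \cite{daniels2024Igusa}, whose proof of the analogous statement for Hodge-type Shimura varieties never uses the Hodge embedding beyond the existence of the Cartesian diagram of Theorem~\ref{thm:cartesian_diagram} and the properties collected in Corollary~\ref{cor:igs_properties}. First I would set up the notation: write $A = \bar{\pi}_{\mathrm{Crys},*}\Lambda \in \mathcal{D}^{\ULA}(\Bun_G)$ (using part (4) of Corollary~\ref{cor:igs_properties}), let $j\colon \Bun_{G,\mu^{-1}} \hookrightarrow \Bun_G$ be the open immersion, and recall the Hecke correspondence $h_1, h_2\colon \Hck_{G,\le\mu} \to \Bun_G$ (the second together with the leg map to $\Div_{\CY}$). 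The claim is an identification of $R\Gamma(\mathrm{Sh}_{K_p,E},\Lambda)$ with the Hecke operator $T_\mu$ (or rather its unnormalized incarnation $Rh_{2,k,*}(h_{1,k}^* A[d](d/2))$) applied to $A$, after base change to the geometric special point $k$.

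The key steps, in order: (i) Use the Cartesian diagram of Theorem~\ref{thm:cartesian_diagram} together with the fact that $\red$ is proper (properness of the Shimura variety, Remark~\ref{rmk:reduction_map}) and the base-change equivalence of Corollary~\ref{cor:igs_properties}(3) to rewrite $R\Gamma(\mathrm{Sh}_{K}(G,\mu)^\diamondsuit_E, \Lambda)$, via $\pi_{\mathrm{HT}}$, as the pullback along $\mathrm{BL}$ of a pushforward from $\Igs_{G,\mu,K^p}$. Concretely, $R\Gamma_c(\Sh_{K^p},\Lambda)$ should be computed as the global sections over $\Bun_{G,\mu^{-1}}$ of $j^* A$ twisted by the correspondence, because the Hodge--Tate period map factors the Shimura variety as a $[\Gr_{G,\mu^{-1}}/K_p]$-bundle over the Igusa stack. (ii) Identify the geometry: the stack $[\Gr_{G,\mu^{-1}}/K_p]$ fibered over $\Bun_{G,\mu^{-1}}$ via $\mathrm{BL}$ is exactly (a chart for) the Hecke stack $\Hck_{G,\le\mu}$ restricted over the $\mu^{-1}$-stratum, with $h_1$ corresponding to $\bar\pi_{\mathrm{Crys}}$ and $h_2$ to the target bundle; this is where the geometric Satake sheaf $\mathcal{S}_{V_\mu}$ enters, being (up to shift and twist by $[d](d/2)$, $d = \dim\Sh_K$) the constant sheaf on the minuscule Schubert cell $\Gr_{G,\mu^{-1}}$. (iii) Assemble (i) and (ii): pullback-pushforward along the correspondence, combined with $j_!$ to pass from the open stratum to all of $\Bun_G$, yields precisely $T_\mu$ applied to $A$ evaluated suitably; the shift/twist bookkeeping uses $\ell$-cohomological smoothness of $\Igs$ of dimension $0$ with dualizing complex $\Lambda[0]$ (Corollary~\ref{cor:igs_properties}(1)) so no extra correction term appears from the Igusa side. (iv) Finally, base-change everything to $\Spec k$ (the residue field of $E$) using that $\Igs$ descends to $\Spd(\mathbb{F}_p)$ (Remark~\ref{frobeniusisidentity}) and track the $G(\mathbb{Q}_p)\times W_E$-equivariance, which is automatic from the functoriality of all the maps involved and the $W_E$-structure on the Hecke operator.

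The main obstacle I expect is step (ii): making precise the identification of the Cartesian-square geometry with the Hecke stack and checking that the constant sheaf $\Lambda$ on $[\Gr_{G,\mu^{-1}}/K_p]$ matches the Satake sheaf $\mathcal{S}_{V_\mu}$ up to the correct shift and Tate twist. Because $\mu$ is minuscule this Schubert cell is smooth and the Satake sheaf is just a shifted-twisted constant sheaf, so the comparison is formal once the diagrammatics are set up — but one must be careful that the normalization conventions for $T_\mu$ (the $\BBW$-equivariant structure, the half Tate twists coming from $\Lambda$ containing $\sqrt p$) are consistent with those producing the $[d](d/2)$ appearing in the statement. The remaining verifications — properness for proper base change, the projection formula, and $\ell$-cohomological smoothness to avoid dualizing-complex corrections — are exactly as in \cite[Theorem 8.4.10]{daniels2024Igusa} and go through verbatim, since nothing in that argument used the Hodge-type hypothesis beyond the structural inputs we have already established for meta-unitary data.
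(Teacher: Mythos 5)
Your proposal matches the paper's approach: the paper proves this theorem precisely by observing that the argument of \cite[Theorem 8.4.10]{daniels2024Igusa} applies verbatim, since it only uses the Cartesian diagram of Theorem~\ref{thm:cartesian_diagram} and the properties of Corollary~\ref{cor:igs_properties} (ULA-ness of $A=\bar\pi_{\mathrm{Crys},*}\Lambda$, cohomological smoothness of the Igusa stack, base change along the square) rather than the Hodge-type hypothesis. Your more detailed outline of the Hecke-correspondence bookkeeping is a faithful reconstruction of that cited argument, so the two proofs are essentially the same.
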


\begin{thm}\label{thm:mantovan_cohomological}
Let $(G,\mu)$ be a meta-unitary Shimura datum. There exists a filtration on the complex of smooth  $G(\mathbb{Q}_p)\times W_E$ representations $\textup{R}\Gamma(\Sh_{K_p}, \Lambda)$, indexed by the set of Newton strata $[b] \in B(G)$, whose graded pieces are given by
\[ \textup{gr}_{[b]} \textup{R}\Gamma(\Sh_{K_p, E}, \Lambda) \simeq \textup{R}\Gamma(\Igs^{b}_{G,\mu,K^p}, \Lambda)^{\textup{op}} \otimes_{\CH(G_b)}^L \textup{R}\Gamma_c(\CM_{G,b,\mu,\infty}, \delta_b)[2d_b]. \]
\end{thm}
\section{compatability for Fargues-Scholze local Langlands}
One can reinterpret Theorem \ref{heckeoperatorshimuracohomology} as establishing compatibility between the Fargues-Scholze Local Langlands correspondence and the cohomology of Shimura varieties. We will follow \cite[Section 9]{daniels2024Igusa}.

Let $r \colon \hat{G} \to \operatorname{GL}(V)$ be a representation of the dual group, and consider the induced vector bundle $\mathcal{V}$ on the stack of $L$-parameters. The spectral action of $\mathcal{V}$ on $D(\operatorname{Bun}_G)$ coincides with that of the Hecke operator $T_V$. 

Let $\mathcal{Z}^{\mathrm{spec}}(G,\Lambda)$ be the ring of global sections on the stack of $L$-parameters, and let $\mathcal{Z}(G(\mathbb{Q}_p),\Lambda)$ be the Bernstein center of the category of representations. There exists a morphism 
\[
    \Psi_G \colon \mathcal{Z}^{\mathrm{spec}}(G,\Lambda) \to \mathcal{Z}(G(\mathbb{Q}_p),\Lambda).
\]
An irreducible representation of $G(\mathbb{Q}_p)$ yields a character $\chi$ of the Bernstein center. Under the Fargues--Scholze Local Langlands correspondence, we attach the local system associated to $\chi \circ \Psi_G$ to this character. The proof of the following theorem only uses inputs established in the last section and some general facts about the geometrization of local Langlands that are true for any $G$.

\begin{thm}\cite[Theorem 9.1.4]{daniels2024Igusa}
Let $(G,\mu)$ be a meta-unitary Shimura datum. The spectral and the usual Weil action on $\textup{R}\Gamma(\Sh_{K_p}, \Lambda)$ agree. Moreover, The action of $\mathcal{Z}^{\mathrm{spec}}(G,\Lambda)$ on $\textup{R}\Gamma(\Sh_{K_p}, \Lambda)$ factors through  the natural action of $\mathcal{Z}(G(\mathbb{Q}_p),\Lambda)$ via $\Psi_G$.
\end{thm}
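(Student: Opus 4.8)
The plan is to run the argument of \cite[Section 9]{daniels2024Igusa} essentially verbatim, observing that it uses only the geometric input already established for meta-unitary Shimura data: the controlling sheaf $\mathscr{F} \in \CD^{\ULA}(\Bun_G)$ of Corollary \ref{cor:igs_properties}(4), the Hecke realization $\textup{R}\Gamma(\Sh_{K^p}(G,\mu)_E,\Lambda) \simeq i_{1,\mathbb{F}_p}^* T_{\mu,\mathbb{F}_p,!}\mathscr{F}$ of Theorems \ref{controlingsheaf} and \ref{heckeoperatorshimuracohomology}, and the general Fargues--Scholze formalism (the spectral action of Definition \ref{dfn:spectral_action} and the Bernstein-center map $\Psi_G$). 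The key observation is that none of these ingredients, nor the formal manipulations relating them, uses that $(G,\mu)$ is of Hodge type: the only place where Hodge-type geometry entered in \emph{loc.\ cit.}\ was in the \emph{construction} of the Igusa stack and the Cartesian square of property (1), and these have now been supplied by Theorem \ref{thm:cartesian_diagram} and Lemma \ref{functorial}.

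First I would settle the equality of the two $W_E$-actions. By construction $T_\mu$ is the endofunctor $\mathcal{F}\mapsto Rh_{2!}(h_1^*\mathcal{F}\otimes^{\mathbb{L}}\mathcal{S}_{V_\mu})$, and Fargues--Scholze's geometric Satake endows it with a canonical factorization through $D(\Bun_G)^{\BBW}$, the Weil descent datum being produced by the partial Frobenius on $\Hck_G$ over $\Div_{\CY}$. Applying this to $\mathcal{F}=\mathscr{F}$ and restricting along $i_1$ yields a $W_E$-action on $i_{1,\mathbb{F}_p}^* T_{\mu,\mathbb{F}_p,!}\mathscr{F}$, which under Theorem \ref{heckeoperatorshimuracohomology} is to be matched with the arithmetic Galois action on $\textup{R}\Gamma_{\textup{\'et}}(\Sh_{K^p}(G,\mu)_{\overline{E}},\Lambda)$. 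This matching is precisely the $W_E$-equivariance already asserted in Theorem \ref{controlingsheaf}: over the generic fiber the Cartesian diagram of Remark \ref{rmk:generic_fiber_cartesian} identifies the Shimura-variety cohomology with a pullback along $\pi_{\mathrm{HT}}$, and the Weil descent datum on $[\Gr_{G,\mu^{-1}}/K_p]$ relative to $\Bun_{G,\mu^{-1}}$ used by Fargues--Scholze is the one inducing the Galois action on $\pi_{\mathrm{HT}}^*(-)$. Since the spectral action of $\mathcal{Z}^{\mathrm{spec}}(G,\Lambda)$ on any object of $D(\Bun_G,\Lambda)$ is by design compatible with the $\BBW$-structure, it commutes with this $W_E$-action, which is the first assertion.

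Next I would treat the Bernstein-center factorization. Since $\Sh_{K^p}(G,\mu)=\varprojlim_{K_p}\Sh_{K_pK^p}(G,\mu)$, the complex $\textup{R}\Gamma(\Sh_{K^p},\Lambda)$ is a complex of smooth $G(\mathbb{Q}_p)$-representations, hence a module over $\mathcal{Z}(G(\mathbb{Q}_p),\Lambda)$; equivalently, after $i_1^*$ it lives in $D(\Bun_G^1,\Lambda)\simeq D(G(\mathbb{Q}_p),\Lambda)$ (Definition \ref{dfn:Bun_G}). Fargues--Scholze define $\Psi_G$ exactly so that the spectral action of $\mathcal{Z}^{\mathrm{spec}}(G,\Lambda)$ on $D(\Bun_G^1,\Lambda)$ coincides with the action of $\mathcal{Z}(G(\mathbb{Q}_p),\Lambda)$ through $\Psi_G$. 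As the spectral action is an action by natural transformations on all of $D(\Bun_G,\Lambda)$, it commutes with $T_\mu$ and with $i_1^*$; applying it to $T_{\mu,\mathbb{F}_p,!}\mathscr{F}$ and restricting along $i_1$ therefore gives an action which is, on the one hand, the spectral action on $\textup{R}\Gamma(\Sh_{K^p},\Lambda)$, and on the other hand factors through $\mathcal{Z}(G(\mathbb{Q}_p),\Lambda)$ via $\Psi_G$. Together with the equivariance of the previous step this yields the full statement.

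The main obstacle is the identification of the two Weil-group actions, the only genuinely non-formal point; in \cite{daniels2024Igusa} this required a careful comparison of descent data through the Hodge--Tate period morphism. In our setting there is no moduli description of the generic fiber of a meta-unitary Shimura variety beyond the one it inherits as a closed subvariety of the ambient PEL-unitary Shimura variety $\Sh(\tilde G,\mu^\triangleright)$, so I would instead deduce the comparison by transporting it along the closed immersion of Lemma \ref{functorial} together with the compatibility of $\pi_{\mathrm{Crys}}$ and $\pi_{\mathrm{HT}}$ with the embedding $(G,\mu)\hookrightarrow(\tilde G,\mu^\triangleright)$, thereby reducing to the PEL-type case already handled in \cite{daniels2024Igusa} and \cite{zhang2023pel}. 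The remaining bookkeeping --- that the isomorphism of Theorem \ref{heckeoperatorshimuracohomology} respects the $G(\mathbb{Q}_p)$-action, the prime-to-$p$ Hecke action, and the chosen square root of $p$ entering the geometric Satake normalization --- is routine and identical to \emph{loc.\ cit.}
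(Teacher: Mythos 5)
Your proposal is correct and follows essentially the same route as the paper: the paper offers no independent argument, simply invoking \cite[Section 9]{daniels2024Igusa} on the strength of the inputs it has already established for meta-unitary data (the Igusa stack and Cartesian square of Theorem \ref{thm:cartesian_diagram}, the controlling sheaf, and Theorem \ref{heckeoperatorshimuracohomology}), exactly as you do. Your additional suggestion of reducing the Weil-action comparison to the PEL case via the closed immersion of Lemma \ref{functorial} is harmless but not needed, since the paper's point (as with Theorems \ref{heckeoperatorshimuracohomology} and \ref{thm:mantovan_cohomological}) is that the original descent-data argument of Daniels--van~Hoften--Kim--Zhang never uses the Hodge-type hypothesis once the fiber-product formalism is available.
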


Denote by $\mathcal{Z}_{K_p}$ the center of $\mathcal{H}_{K_p}$. For a character $\chi : \mathcal{Z}_{K_p} \to L$, define the associated $W_E$-representation
\[
W^i(\chi) := H^i_{\textup{\'et}}(\Sh_K(G,\mu), \Lambda) \otimes_{\mathcal{Z}_{K_p}, \chi} L.
\]
Using this, one can obtain the following compatibility theorem:

\begin{thm}\label{compatabilitywithScholzefargue}\cite{daniels2024Igusa}
Let $(G,\mu)$ be a meta-unitary Shimura datum, and assume that the order of $\pi_0(Z(G))$ is prime to $\ell$. Then every irreducible $L$-linear representation of $W_E$ appearing as a subquotient of $W^i(\chi)$ also appears as a subquotient of $(r_\mu \circ \varphi_\chi)|_{W_E}$.
\end{thm} 

\section{Functoriality of Igusa stack}
In this Section we will prove that the Igusa stack constructed for meta-unitray Shimura varieties is functorial in the sense of \cite{kim2025uniqueness}.

\begin{dfn}\label{dfn:functorial_Igusa}
Let \( U \subseteq \Sh(G,X)_E^\diamondsuit \) be an open subsheaf stable under the action of \( G(\mathbb{A}_f^{p,\infty}) \). A functorial Igusa stack for \( U \) is the data of
\begin{itemize}
    \item a v-stack $\Igs^U(G, X)$ together with a \( G(\mathbb{A}_f^{p,\infty}) \)-action,
    \item a \( G(\mathbb{A}_f^{p,\infty}) \)-equivariant map of v-stacks \( \pi_{\mathrm{HT}}^U \colon \Igs^U(G, X) \to \Bun_{G,\mu^{-1}} \), where the action on \( \Bun_{G,\mu^{-1}} \) is trivial,
    \item a 2-Cartesian diagram of v-stacks
    \[
    \begin{tikzcd}
        U \arrow[r, "\pi_{\mathrm{HT}}"] \arrow[d, "i_{\Igs}"'] & \Gr_{G,\mu^{-1},E} \arrow[d, "\textup{BL}"] \\
        \Igs^U(G, X) \arrow[r, "\pi_{\mathrm{HT}}^U"] & \Bun_{G,\mu^{-1}},
    \end{tikzcd}
    \]
\end{itemize}
such that
\begin{enumerate}
    \item the \( G(\mathbb{A}_f^{p,\infty}) \)-action on \( U \simeq \Igs^U(G, X) \times_{\Bun_{G,\mu^{-1}}} \Gr_{G,\mu^{-1},E} \) induced by the \( G(\mathbb{A}_f^{p,\infty}) \)-action on \( \Igs^U(G, X) \) and the \( G(\mathbb{Q}_p) \)-action on \( \Gr_{G,\mu^{-1},E} \) recovers the Hecke action on \( U \),
    \item the \( (\phi, \mathrm{id}) \)-action on \( \Igs^U(G, X) \times_{\Bun_{G,\mu^{-1}}} \Gr_{G,\mu^{-1},E} \), induced by the canonical isomorphism \( \phi_{\Bun_{G,\mu^{-1}}} \simeq \mathrm{id}_{\Bun_{G,\mu^{-1}}} \), recovers the identity map on \( U \).
\end{enumerate}
\end{dfn}
\begin{thm}\label{functorialIgusa}
The Igusa stack for the meta-unitary Shimura variety is functorial in the sense of the definition \ref{dfn:functorial_Igusa}.
\end{thm}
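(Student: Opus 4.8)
The plan is to exhibit the system $\{\Igs_{G,\mu,K^p}\}_{K^p}$ together with the crystalline period morphisms, already constructed in Section~5, as a functorial Igusa stack in the sense of Definition~\ref{dfn:functorial_Igusa}. I would take $U := \Sh(G,X)_E^\diamondsuit$, realised as $\varprojlim_{K_p}\varprojlim_{K^p}\Sh_{K_pK^p}(G,\mu)^\diamondsuit$, and set $\Igs^U(G,X) := \varprojlim_{K^p}\Igs_{G,\mu,K^p}$ with $\pi^U_{\mathrm{HT}} := \varprojlim_{K^p}\bar\pi_{\mathrm{Crys}}$. The $G(\mathbb{A}_f^{p,\infty})$-action on $\Igs^U(G,X)$ is the one obtained by descending the prime-to-$p$ Hecke action on the tower $(\CS_{K^p})_{K^p}$ through the reduction map $\red\colon\CS_{K^p}\to\Igs_{G,\mu,K^p}$ of Remark~\ref{rmk:reduction_map}; since $\pi_{\mathrm{Crys}}$, hence $\bar\pi_{\mathrm{Crys}}$, is Hecke-equivariant away from $p$ and $G(\mathbb{Q}_p)$ acts trivially on $\Igs^U(G,X)$ and on $\Bun_{G,\mu^{-1}}$, the morphism $\pi^U_{\mathrm{HT}}$ will be $G(\mathbb{A}_f^{p,\infty})$-equivariant for the trivial action on the target.

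For the $2$-Cartesian square I would pass the last Cartesian diagram of Remark~\ref{rmk:generic_fiber_cartesian} (itself a consequence of Theorem~\ref{thm:cartesian_diagram}) to the limit over $K'_p\to\{1\}$, using $\varprojlim_{K'_p}[\Gr_{G,\mu^{-1}}/K'_p]=\Gr_{G,\mu^{-1},E}$ and the fact that fibre products of v-stacks commute with this inverse limit; this gives, for each $K^p$, a $2$-Cartesian square
\[
\begin{tikzcd}
\Sh(G,\mu)^\diamondsuit_{K^p} \arrow[r, "\pi_{\mathrm{HT}}"] \arrow[d, "\red"'] &  \Gr_{G,\mu^{-1},E} \arrow[d, "\mathrm{BL}"] \\
\Igs_{G,\mu,K^p} \arrow[r, "\bar{\pi}_{\mathrm{Crys}}"] & \Bun_{G,\mu^{-1}}
\end{tikzcd}
\]
and taking $\varprojlim_{K^p}$ yields the square required in Definition~\ref{dfn:functorial_Igusa}, with $i_{\Igs}=\red$. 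It then remains to verify the two compatibilities. For~(1), under the identification $U\simeq\Igs^U(G,X)\times_{\Bun_{G,\mu^{-1}}}\Gr_{G,\mu^{-1},E}$ just produced, the $G(\mathbb{A}_f^{p,\infty})$-factor of the Hecke action is by construction the action on $\Igs^U(G,X)$, while the $G(\mathbb{Q}_p)$-factor acts through $\pi_{\mathrm{HT}}$ and the tautological $G(\mathbb{Q}_p)$-action on $\Gr_{G,\mu^{-1},E}$; that these commuting actions assemble to the full Hecke action on $U$ is exactly the compatibility of $\pi_{\mathrm{HT}}$ with Hecke operators at $p$ together with Hecke-equivariance of $\red$ away from $p$. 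For~(2), I would use that by Remark~\ref{frobeniusisidentity} there is a natural transformation from the absolute Frobenius of $\Igs^{\mathrm{pre}}_{G,\mu,K^p}$ to the identity --- precisely the datum effecting the descent to $\Spd(\mathbb{F}_p)$ --- and that this trivialisation is compatible under $\bar\pi_{\mathrm{Crys}}$ with the canonical isomorphism $\phi_{\Bun_{G,\mu^{-1}}}\simeq\mathrm{id}$; consequently the $(\phi,\mathrm{id})$-action on the fibre product acts trivially on $\Igs^U(G,X)$ and on $\Gr_{G,\mu^{-1},E}$ (the latter living over $\Spd E$, hence carrying no Frobenius), so it recovers $\mathrm{id}_U$.

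The main obstacle I expect is the bookkeeping of base fields in the fibre product: $\Igs^U(G,X)$ and $\Bun_{G,\mu^{-1}}$ are naturally v-stacks over $\Spd(\mathbb{F}_p)$ whereas $U$ and $\Gr_{G,\mu^{-1},E}$ live over $\Spd(E)$, so the $2$-Cartesian square is one of absolute v-stacks, and the Frobenius-triviality of~(2) is the mechanism reconciling the base change along $\Spd(E)\to\Spd(\mathbb{F}_p)$. Making this precise is exactly where Remark~\ref{frobeniusisidentity} and the rigidity of quasi-isogenies (Proposition~\ref{quasi-isogeny over nilpotent thickening}) enter; once these are in place the functoriality follows, and, combined with \cite{kim2025uniqueness}, so does the compatibility with the uniqueness formalism needed for the extension to Shimura data embedding into a meta-unitary one.
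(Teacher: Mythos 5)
Your proposal is correct and follows essentially the same route as the paper, whose proof simply invokes the Cartesian square of Theorem~\ref{thm:cartesian_diagram} (in the form of Remark~\ref{rmk:generic_fiber_cartesian}) for the fiber-product axiom and Remark~\ref{frobeniusisidentity} for the $(\phi,\mathrm{id})$-condition. You merely spell out the bookkeeping (limits over $K_p$ and $K^p$, Hecke-equivariance of $\red$, base-field issues) that the paper leaves implicit.
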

\begin{proof}
The above axioms are true by theorem \ref{thm:cartesian_diagram} and remark \ref{frobeniusisidentity}.
\end{proof}

\begin{cor}\label{existanceofIgusaforembedding}
Let $(G,\mu)$ be a Shimura datum such that there exists an embedding of $(G,\mu)$ inside a meta-unitary Shimura datum. Then the functorial Igusa stack for $(G,\mu)$ exists.
\end{cor}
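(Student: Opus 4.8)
The plan is to construct the functorial Igusa stack for $(G,\mu)$ by base change from the meta-unitary case, using the pullback formalism for functorial Igusa stacks developed in \cite{kim2025uniqueness}. Write $\iota\colon(G,X)\hookrightarrow(G',X')$ for the given embedding into a meta-unitary Shimura datum, and let $\mu$, $\mu'$ denote the associated minuscule cocharacters; after conjugating $\mu$ we may assume $\iota\circ\mu$ is $G'(\overline{\mathbb{Q}}_p)$-conjugate to $\mu'$. Then $\iota$ induces a closed immersion $\Gr_{G,\mu^{-1}}\hookrightarrow\Gr_{G',{\mu'}^{-1}}$, a morphism $\Bun_{G,\mu^{-1}}\to\Bun_{G',{\mu'}^{-1}}$ compatible with $\mathrm{BL}$, and, after base change to the reflex field $E$ of $(G,X)$, a $G(\mathbb{A}_f^{p,\infty})$-equivariant morphism $\Sh(G,X)_E^\diamondsuit\to\Sh(G',X')_E^\diamondsuit$ compatible with the Hodge--Tate period maps. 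I would take $U'=\Sh(G',X')_E^\diamondsuit$, for which the functorial Igusa stack $\Igs^{U'}(G',X')$ is available by Theorem~\ref{functorialIgusa}, and $U=\Sh(G,X)_E^\diamondsuit=\iota^{-1}(U')$.

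Next I would set
\[
\Igs^{U}(G,X)\;:=\;\Igs^{U'}(G',X')\times_{\Bun_{G',{\mu'}^{-1}}}\Bun_{G,\mu^{-1}},
\]
with $\pi_{\mathrm{HT}}^{U}$ the projection to the second factor. The $G'(\mathbb{A}_f^{p,\infty})$-action on $\Igs^{U'}(G',X')$ restricts through $\iota$ to a $G(\mathbb{A}_f^{p,\infty})$-action on $\Igs^{U}(G,X)$, trivial on $\Bun_{G,\mu^{-1}}$, and the canonical $2$-isomorphism $\phi_{\Bun_{G',{\mu'}^{-1}}}\simeq\mathrm{id}$ pulls back to $\phi_{\Bun_{G,\mu^{-1}}}\simeq\mathrm{id}$, so axiom~(2) of Definition~\ref{dfn:functorial_Igusa} holds automatically. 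Base-changing the Cartesian square of Definition~\ref{dfn:functorial_Igusa} for $(G',X')$ along $\Gr_{G,\mu^{-1}}\to\Gr_{G',{\mu'}^{-1}}$ then identifies $\Igs^{U}(G,X)\times_{\Bun_{G,\mu^{-1}}}\Gr_{G,\mu^{-1}}$ with $U'\times_{\Gr_{G',{\mu'}^{-1}}}\Gr_{G,\mu^{-1}}$, so that producing the required $2$-Cartesian square for $(G,X)$ reduces to exhibiting a $G(\mathbb{A}_f^{p,\infty})$-equivariant isomorphism
\[
U\;\simeq\;U'\times_{\Gr_{G',{\mu'}^{-1}}}\Gr_{G,\mu^{-1}}
\]
over $\Gr_{G,\mu^{-1}}$, compatible with $\pi_{\mathrm{HT}}$.

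This last identification is the one genuinely non-formal step, and I expect it to be the main obstacle. It amounts to the statement that the Hodge--Tate period map detects the sub-Shimura datum: a point of $U'$ should lie in $U$ exactly when the $G'$-shtuka it carries — equivalently, the induced Hodge filtration, or its $\ell$-adic realisations for $\ell\neq p$ — reduces its structure group along $\iota$ to $G$, and this reduction condition is precisely what cuts $\Gr_{G,\mu^{-1}}$ out of $\Gr_{G',{\mu'}^{-1}}$. I would establish it from the Tannakian characterisation of $\Sh(G,X)\subset\Sh(G',X')$ by Frobenius- and Hodge-invariant tensors, combined with the compatibility of $\pi_{\mathrm{HT}}$ with $\iota$ noted above; equivalently and more directly, one invokes the pullback package for functorial Igusa stacks of \cite{kim2025uniqueness}, which encodes exactly this compatibility. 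Granting the identification, the Hecke-equivariance required in axiom~(1) for $(G,X)$ is inherited from axiom~(1) for $(G',X')$, since the $G(\mathbb{Q}_p)$-action on $\Gr_{G,\mu^{-1}}$ is the restriction of the $G'(\mathbb{Q}_p)$-action on $\Gr_{G',{\mu'}^{-1}}$; this would complete the verification that $\Igs^{U}(G,X)$ together with the data above satisfies Definition~\ref{dfn:functorial_Igusa}, proving the corollary.
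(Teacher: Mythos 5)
Your construction has a genuine gap at exactly the step you flagged, and that step is not merely ``non-formal'' --- it is false in general. The identification $U \simeq U' \times_{\Gr_{G',\mu'^{-1}}} \Gr_{G,\mu^{-1}}$ would say that a point of the ambient infinite-level Shimura variety lies on the sub-Shimura variety as soon as its Hodge--Tate period lands in the smaller Schubert cell. But the fibre of $\pi_{\mathrm{HT}}$ over a geometric point of $\Gr_{G',\mu'^{-1}}$ is (an infinite-level) Igusa variety/central leaf for the corresponding $p$-divisible object, and this contains many points that do not admit a reduction of structure group to $G$ in the sense of the Shimura datum. Already for a CM torus $T \subset \GL_2$ (with $p$ inert in the CM field), $\Gr_{T,\mu^{-1}}$ is a single point of the Drinfeld half-plane inside $\mathbb{P}^1 = \Gr_{\GL_2,\mu^{-1}}$, and the $\pi_{\mathrm{HT}}$-fibre over it is the full (profinite) set of trivialized supersingular points, which strictly contains the $T$-CM points. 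Consequently the proposed definition $\Igs^{U}(G,X) := \Igs^{U'}(G',X') \times_{\Bun_{G',\mu'^{-1}}} \Bun_{G,\mu^{-1}}$ is too large: if it satisfied the Cartesian square of Definition \ref{dfn:functorial_Igusa}, then $U$ would be forced to equal this too-large preimage, which it is not. The Tannakian/``tensor-detection'' argument you allude to does not rescue this, because Frobenius- and Hodge-invariant tensors are exactly the kind of extra data that the bare fibre product forgets.

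The correct route is the one the paper takes, which is a one-line combination of Theorem \ref{functorialIgusa} with \cite[Theorem 11.4]{kim2025uniqueness}: the content of that theorem is precisely the non-formal pullback construction (roughly, one takes the $v$-image of $\Sh(G,X)^{\diamondsuit}$ inside $\Igs^{U'}(G',X') \times_{\Bun_{G',\mu'^{-1}}} \Bun_{G,\mu^{-1}}$ rather than the whole fibre product, and the hard part is proving that the resulting square is $2$-Cartesian and Hecke-equivariant). Your closing remark that one may ``equivalently'' invoke \cite{kim2025uniqueness} is therefore not an equivalence: quoting that theorem is the entire proof, and it does not validate the intermediate identification or the fibre-product definition you propose. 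As written, your argument either rests on a false statement or reduces to the paper's citation with the preceding construction playing no role.
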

\begin{proof}
    Combine Theorems \ref{functorialIgusa} and \cite[Theorem 11.4]{kim2025uniqueness}.
\end{proof}
\section{Generic part of the cohomology of meta-unitary Shimura variety}

To obtain a precise understanding of the cohomology of meta-unitary Shimura varieties at hyperspecial level, we adopt the methods established by Yang and Zhu\cite{yang2025generic}. Their arguments regarding the geometric and spectral description of cohomology apply verbatim in our setting. Furthermore, the situation is considerably simplified by the fact that meta-unitary Shimura varieties are proper. Consequently, the technical complications regarding the boundary of the Shimura variety—specifically, the need to verify that the Igusa stack is ``well-positioned'' relative to compactifications—vanish in our context.

\subsection{Unipotent Categorical Local Langlands Correspondence}
The central engine of the Yang–Zhu method is the unipotent categorical local Langlands correspondence. In this section, we will give a brief summary of the key objects and the main theorem of this correspondence. For further details, we refer the reader to~\cite{yang2025generic} and~\cite{hemo2021unipotent}.

Let $\Lambda$ be the coefficient ring ($\overline{\mathbb{Q}}_\ell$ or $\overline{\mathbb{F}}_\ell$). Let $\Shv^{\mathrm{unip}}_{\mathrm{f.g.}}(\Isoc_G, \Lambda)$ denote the category of ind-finitely generated sheaves on $\Isoc_G$ that are unipotent on each Newton stratum. One can define an exotic $t$-structure on this category:
\begin{dfn}\label{dfn:exotic_t_structure}
    The \emph{exotic $t$-structure} on $\Shv(\Isoc_G, \Lambda)$, denoted by $$(\Shv(\Isoc_G, \Lambda)^{e, \leq 0}, \Shv(\Isoc_G, \Lambda)^{e, \geq 0})$$, is defined by the following conditions on an object $\mathcal{F}$:
    \begin{enumerate}
        \item $\mathcal{F} \in \Shv(\Isoc_G, \Lambda)^{e, \leq 0}$ if and only if for all $b \in B(G)$,
        \[ (i_b)^! \mathcal{F} \in \Rep(G_b(E), \Lambda)^{\leq \langle 2\rho, \nu_b \rangle}. \]
        \item $\mathcal{F} \in \Shv(\Isoc_G, \Lambda)^{e, \geq 0}$ if and only if for all $b \in B(G)$,
        \[ (i_b)^\sharp \mathcal{F} \in \Rep(G_b(E), \Lambda)^{\geq \langle 2\rho, \nu_b \rangle}, \]
        where $(i_b)^\sharp$ denotes the right adjoint to $(i_b)_!$.
    \end{enumerate}
\end{dfn}

Let $\Loc_{^L G, E}^{\mathrm{unip}}$ denote the moduli stack of unipotent L-parameters valued in the Langlands dual group $^L G$. The categorical local Langlands correspondence intertwines the spectral action on the moduli of parameters with an action on sheaves. The $\infty$-category $\Perf(\Loc_{^L G, F}^{\mathrm{unip}})$ act on $\IndShv^{\mathrm{unip}}_{\mathrm{f.g.}}(\Isoc_G, \Lambda)$ via a spectral action denoted by $\star$.

\begin{dfn}\label{dfn:hecke_op_unip}
    Let $V \in \Rep(\hat{G})$ be a finite-dimensional representation of the dual group. Let $\widetilde{V}$ be the vector bundle on $\Loc_{^L G, E}^{\mathrm{unip}}$ associated to $V$. The \emph{Hecke operator} $T_V$ is defined as the functor:
    \[
    T_V: \Shv^{\mathrm{unip}}(\Isoc_G, \Lambda) \to \Shv^{\mathrm{unip}}(\Isoc_G, \Lambda), \quad \mathcal{F} \mapsto \widetilde{V} \star \mathcal{F}.
    \]
\end{dfn}

\begin{thm}\cite[Theorem 1.8]{yang2025generic}\label{thm:main_unip}
    Assume that the parameter $\xi$ is a generic conjugacy class of unramified $L$-parameters.
    \begin{enumerate}
        \item The unipotent local Langlands functor restricts to an equivalence of categories:
        \[
        \mathbb{L}_{G, \xi}^{\mathrm{unip}}: \IndShv^{\mathrm{unip}}_{\mathrm{f.g.}}(\Isoc_G, \Lambda)_\xi \xrightarrow{\sim} \IndCoh(\Loc_{^L G, E}^{\mathrm{unip}})_\xi.
        \]
        \item Let $V$ be a tilting $\hat{G}$-representation. Then the Hecke action $T_V$ is exotic $t$-exact when restricted to the subcategory $\Shv^{\mathrm{unip}}(\Isoc_G, \Lambda)_\xi$.
        \item Moreover, if $\Lambda = \overline{\mathbb{Q}}_\ell$, the functor $\mathbb{L}_{G, \xi}^{\mathrm{unip}}$ is compatible with the exotic $t$-structure on $\Shv^{\mathrm{unip}}(\Isoc_G, \Lambda)_\xi$ and the standard $t$-structure on $\IndCoh(\Loc_{^L G, E}^{\mathrm{unip}})_\xi$.
    \end{enumerate}
\end{thm}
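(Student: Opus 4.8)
This is \cite[Theorem~1.8]{yang2025generic}, which we use unchanged; the plan is only to indicate why it holds. The input is the unipotent categorical local Langlands correspondence of \cite{hemo2021unipotent}: a colimit-preserving, $\Perf(\Loc_{^L G, E}^{\mathrm{unip}})$-linear functor $\mathbb{L}_G^{\mathrm{unip}}\colon \IndShv^{\mathrm{unip}}_{\mathrm{f.g.}}(\Isoc_G,\Lambda)\to\IndCoh(\Loc_{^L G, E}^{\mathrm{unip}})$, under which each Hecke operator $T_V$ becomes $\widetilde V\star(-)=(-)\otimes\widetilde V$. This is not an equivalence globally, so the plan is to decompose both sides over the connected components of $\Loc_{^L G, E}^{\mathrm{unip}}$ and to argue block by block, the point being that at a generic $\xi$ the block is as simple as possible.

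First I would pin down the spectral side at a generic $\xi$. Genericity says $R\Gamma(W_{\BQ_p},\alpha\circ\varphi_T)=0$ for every coroot $\alpha$; this forces the semisimple element $\varphi_\xi(\mathrm{Frob})$ to be regular in ${}^L G$, so its centralizer is a maximal torus and the obstructions to deforming $\xi$ vanish, whence the formal completion of $\Loc_{^L G, E}^{\mathrm{unip}}$ along the orbit of $\xi$ is the quotient by that torus of a smooth affine formal scheme. On the automorphic side, genericity forces, for each $b\in B(G)$, the relevant unipotent block of $\Rep(G_b(E),\Lambda)$ to be of principal-series type, hence — through the unipotent Bernstein equivalence and the identification $\Shv(\Isoc_{G,b})\simeq\Rep(G_b(\BQ_p),\Lambda)$ — modules over the structure ring of that same smooth formal scheme. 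Assembling these over all $b\in B(G)$ presents $\IndShv^{\mathrm{unip}}_{\mathrm{f.g.}}(\Isoc_G,\Lambda)_\xi$ as $\IndCoh$ of the completed parameter stack, and checking that $\mathbb{L}_{G,\xi}^{\mathrm{unip}}$ carries the natural stratumwise (Wakimoto-type) generators to the expected ones yields full faithfulness and essential surjectivity; this is part (1).

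For part (3) I would show that the exotic $t$-structure of Definition~\ref{dfn:exotic_t_structure}, built stratumwise from the shifts $\langle 2\rho,\nu_b\rangle$, is characterized by being carried under $\mathbb{L}_{G,\xi}^{\mathrm{unip}}$ to the standard $t$-structure on $\IndCoh$ of the (now essentially affine) localized parameter stack; over $\overline{\BQ}_\ell$ this is exactly (3), and it is checked by testing on standard and costandard objects on each stratum, whose exotic perversity is precisely what those shifts are arranged to pin down. Given this, part (2) over $\overline{\BQ}_\ell$ is immediate: $T_V=\widetilde V\otimes(-)$ with $\widetilde V$ a vector bundle when $V$ is tilting, hence flat and $t$-exact. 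In the modular case $\Lambda=\overline{\BF}_\ell$ one instead argues, in the spirit of Bezrukavnikov's exotic $t$-structure theory, that convolution with a standard (resp.\ costandard) object is right (resp.\ left) exotic $t$-exact — genericity guaranteeing that no higher extensions between the Wakimoto-type objects obstruct this — so that the two-sided filtration of $\widetilde V$ by standard and costandard objects sandwiches $T_V$ into exactness, giving (2).

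The hard part will be the block-theoretic matching in the first step: one must identify the generic unipotent block on the $\Isoc_G$-side — which is not a single $\Rep(G_b(E),\Lambda)$ but is glued from such blocks over all of $B(G)$ — with $\IndCoh$ of the completed $\Loc_{^L G, E}^{\mathrm{unip}}$, and one must control the integral case where these blocks need not be semisimple. This is exactly where the full strength of \cite{hemo2021unipotent}, together with the generic-locus simplifications of \cite{yang2025generic}, is needed; granting those, the remaining steps are formal.
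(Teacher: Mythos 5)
Your proposal treats the statement exactly as the paper does: it is imported verbatim from \cite[Theorem 1.8]{yang2025generic} (resting on the unipotent categorical correspondence of \cite{hemo2021unipotent}), and the paper itself offers no proof, so citing it as a black box is the correct and same approach. Your additional heuristic sketch of why it holds is not needed for the paper's purposes and is appropriately flagged as deferring the hard block-theoretic matching to the cited references.
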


\subsection{Global Geometry and the Igusa Stack}
To apply the local theory to the global setting, one utilizes the geometry of the special fiber of a meta-unitary Shimura variety. Let $(G,\mu)$ be a meta-unitary Shimura variety and $\mu^*$ be the Galios average of the cocharacter $\mu$. Let $K\subset G(A_\BQ^f)$ be a compact open subgroup such that $K_p=\CG$ is parahoric. Let $(\CS_{G,\mu,K})_{\red}$ denote the perfection of the special fiber of the integral model of the Shimura variety.

\begin{rmk}\label{rmk:2_cartesian_unip}
For a meta-unitary Shimura variety, we have the $2$-cartesian diagram:
\begin{equation}
\begin{tikzcd}
(\CS_{G,\mu,K})_{\red} \arrow[r, "\pi_{\mathrm{Crys}}"] \arrow[d, "\mathrm{Nt}"]  & \Sht_{\CG}^{\mathrm{loc}} \arrow[d, "\mathrm{Nt}"] \\
\Igs_{G,\mu,K^p} \arrow[r, "\bar{\pi}_{\mathrm{Crys}}"] & \Isoc_{G, \leq \mu^*}
\end{tikzcd}
\end{equation}
\end{rmk}

\begin{dfn}\label{dfn:canonical_duality}
    Let $\mathbb{D}_{\Igs}^{\textup{can}}$ denote the canonical duality on $\Shv(\Igs, \Lambda)$. We define the object $\omega_{\Igs}^{\textup{can}} \in \Shv(\Igs, \Lambda)$ as the admissible dual of the dualizing sheaf:
    \[
    \omega_{\Igs}^{\textup{can}} := (\mathbb{D}_{\Igs}^{\textup{can}})^{\Adm}(\omega_{\Igs}).
    \]
    Furthermore, we define the \emph{!-Igusa sheaf} $\mathcal{J}^{\textup{can}} \in \Shv(\Isoc_{G, \leq \mu^*}, \Lambda)$ by
    \[
    \mathcal{J}^{\textup{can}} := (\overline{\pi_{\mathrm{Crys}}})_\flat \omega_{\mathrm{Igs}}^{\textup{can}},
    \]
    where $(\overline{\pi_{\mathrm{Crys}}})_\flat$ is the right adjoint of $(\overline{\pi_{\mathrm{Crys}}})_!$.
\end{dfn}

One can compute the cohomology of the Igusa stack in terms of these sheaves. The proof of \cite{yang2025generic} carries over verbatim; in fact, it simplifies in our case because meta-unitary Shimura varieties are proper, and we do not need to address well-positionedness. In the computation of the stalk of the Igusa stack, they also use the existence of a $v$-sheaf local model diagram, primarily in Lemma 4.46, which holds in our situation as well.

Note that they use the fact that the local model admits a schematic model (which is true in our situation by the work of Gleason), but they do not require that the entire local model diagram admits a scheme-theoretic model.

\begin{pro}\cite[Proposition 1.9]{yang2025generic}\label{pro:cohomology_Igusa}
    For $b \in B(G, \mu)$, there is a natural isomorphism
    \[
    (i_b)^! \mathcal{J}^{\textup{can}} \simeq R\Gamma(\mathrm{Ig}_b, \Lambda)
    \]
    in $\Shv(\Isoc_{G, b}, \Lambda) \simeq \Rep(G_b(\mathbb{Q}_p), \Lambda)$, where $i_b: \Isoc_{G, b} \hookrightarrow \Isoc_{G, \leq \mu}$ denotes the inclusion of the Newton stratum.
\end{pro}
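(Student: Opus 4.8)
The plan is to extract the identity $(i_b)^!\mathcal{J}^{\textup{can}}\simeq R\Gamma(\mathrm{Ig}_b,\Lambda)$ from the $2$-Cartesian diagram of Remark \ref{rmk:2_cartesian_unip} together with the torsor description of the Newton strata in Lemma \ref{lem:torsors}. First I would restrict the diagram in Remark \ref{rmk:2_cartesian_unip} to the Newton stratum $\Isoc_{G,b}$: pulling back along $i_b\colon \Isoc_{G,b}\hookrightarrow \Isoc_{G,\le\mu^*}$ produces a $2$-Cartesian square whose bottom-right corner is $\Isoc_{G,b}\simeq[\ast/\widetilde G_b]$, whose top-right corner is the fibre of $\Sht_{\CG}^{\mathrm{loc}}$ over $b$ (the local integral Shimura variety for $b$, presented as $[\CM^{\mathrm{int}}_{G,\mu,b}/\widetilde G_b]$ after passing to the Beauville--Laszlo/Witt-vector model), and whose left column is the central leaf together with its Igusa cover, so that the top-left corner is $[\mathrm{Ig}_b/\widetilde G_b]$ by Lemma \ref{lem:torsors}. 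The key point is that $\mathcal{J}^{\textup{can}}=(\pi_{\mathrm{Crys}})_\flat\,\omega_{\mathrm{Igs}}^{\textup{can}}$ is defined as a right adjoint pushforward, so to compute $(i_b)^!\mathcal{J}^{\textup{can}}$ I need a base-change statement of the form $(i_b)^!(\pi_{\mathrm{Crys}})_\flat\simeq (\pi_{\mathrm{Crys}}^b)_\flat (j_b)^!$, where $j_b\colon\Igs^b\hookrightarrow\Igs_{G,\mu,K^p}$ is the inclusion of the preimage stratum and $\pi_{\mathrm{Crys}}^b$ is the restricted map. This is exactly the $!$-base-change along the $2$-Cartesian square, valid because $\pi_{\mathrm{Crys}}$ is nice enough (separated, representable in spatial diamonds, compactifiable, $\ell$-cohomologically smooth on strata by Corollary \ref{cor:igs_properties}), so its lower-shriek has a right adjoint compatible with $!$-pullback along locally closed immersions.

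Second, I would identify $(j_b)^!\omega_{\mathrm{Igs}}^{\textup{can}}$. Since $\Igs_{G,\mu,K^p}$ is $\ell$-cohomologically smooth of dimension $0$ with dualizing complex $\Lambda[0]$ (Corollary \ref{cor:igs_properties}(1)), we have $\omega_{\Igs}=\Lambda[0]$, hence $\omega_{\Igs}^{\textup{can}}$ — being the admissible dual of $\omega_{\Igs}$ under $\mathbb D_{\Igs}^{\textup{can}}$ — is again (a twist of) the constant sheaf; restricting to the stratum $\Igs^b$ keeps it constant, with the twist recorded by the dualizing complex $\delta_b^{-1}[-2d_b]$ of $\Bun_{G,k}^b$ (via \cite{hamann2025dualizing}) pulled back through $\bar\pi_{\mathrm{Crys}}$. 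Then $(\pi_{\mathrm{Crys}}^b)_\flat$ applied to this sheaf, over the presentation $[\mathrm{Ig}_b/\widetilde G_b]\to[\ast/\widetilde G_b]$, computes $R\Gamma(\mathrm{Ig}_b,\Lambda)$ as an object of $\Rep(\widetilde G_b,\Lambda)$: unwinding the quotient, $(\pi_{\mathrm{Crys}}^b)_\flat$ is the $\widetilde G_b$-equivariant $!$-pushforward from $\mathrm{Ig}_b$ to a point, and its right-adjoint formulation together with the self-duality of $\mathrm{Ig}_b$ (it is a perfect affine scheme by Corollary \ref{represntabilityIgusastack}, so cohomologically smooth/proper enough that $f_\flat$ and $f_!$ differ by the dualizing twist that cancels the $\delta_b,d_b$ twist) produces exactly $R\Gamma(\mathrm{Ig}_b,\Lambda)$ with no residual shift. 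The unipotent group quotient $\widetilde G_b/G_b(\mathbb Q_p)$ contributes nothing to cohomology with these coefficients, so the answer lives in $\Rep(G_b(\mathbb Q_p),\Lambda)\simeq\Shv(\Isoc_{G,b},\Lambda)$ as claimed.

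I expect the main obstacle to be the bookkeeping of the two normalizations — the passage between $(-)_!$ and $(-)_\flat$ (equivalently between $\omega_{\Igs}$ and $\omega_{\Igs}^{\textup{can}}$) and the dualizing twist $\delta_b^{-1}[-2d_b]$ on $\Bun_{G,k}^b$ — and checking they cancel precisely so that no shift or Tate twist survives in the final identification. Concretely, one must verify that the composite $(i_b)^!\circ(\pi_{\mathrm{Crys}})_\flat$ applied to $\omega_{\mathrm{Igs}}^{\textup{can}}$ equals $(\pi_{\mathrm{Crys}}^b)_!$ applied to the constant sheaf up to exactly the twist by $\delta_b[2d_b]$ that is built into the definition of the relative dualizing complex of $\pi_{\mathrm{Crys}}^b$, and that $\mathrm{Ig}_b$ being a perfect scheme of the expected dimension makes $R\Gamma_c(\mathrm{Ig}_b,-)[2d_b](d_b)$ agree with $R\Gamma(\mathrm{Ig}_b,-)$ under the relevant duality. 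This is precisely the computation carried out in \cite{yang2025generic}; since meta-unitary Shimura varieties are proper, the well-positionedness hypotheses there are automatic, so the argument transfers verbatim once the diagram in Remark \ref{rmk:2_cartesian_unip} is in place, which we have established. The remaining verifications — compatibility of all base-change morphisms with the Hecke/$\widetilde G_b$-equivariant structures, and continuity of $(\pi_{\mathrm{Crys}})_\flat$ — are routine given Corollary \ref{cor:igs_properties}.
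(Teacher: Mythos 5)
Your proposal is correct and takes essentially the same route as the paper: the paper's proof of Proposition \ref{pro:cohomology_Igusa} consists precisely of invoking \cite[Proposition 1.9]{yang2025generic}, observing that the argument transfers verbatim once the $2$-Cartesian diagram of Remark \ref{rmk:2_cartesian_unip} and the torsor description of the strata are available, and noting that properness of meta-unitary Shimura varieties makes the well-positionedness hypotheses automatic --- exactly the conclusion you reach. Your additional sketch of the internal base-change and twist bookkeeping is consistent with the Yang--Zhu computation and goes beyond what the paper itself records.
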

\subsection{Local-Global Compatibility}
A key innovation in the work of Yang and Zhu is the computation of the étale cohomology of Shimura varieties in terms of coherent cohomology on the stack of local Langlands parameters. This requires the definition of spectral objects corresponding to the geometric Igusa sheaves defined above.

We define the unipotent spectral Igusa sheaf by applying the unipotent local Langlands functor to the geometric !-Igusa sheaf. Let $$\Psi^L: \Shv(\Isoc_G, \Lambda) \hookrightarrow \IndShv_{\mathrm{f.g.}}(\Isoc_G, \Lambda)$$ be the natural fully faithful embedding, and let $\mathcal{P}^{\unip}$ be the right adjoint of the natural embedding $\IndShv^{\unip}_{\mathrm{f.g.}}(\Isoc_G, \Lambda) \to \IndShv_{\mathrm{f.g.}}(\Isoc_G, \Lambda)$.

\begin{dfn}\label{dfn:spectral_Igusa}
    The \emph{unipotent coherent !-Igusa sheaf} is defined as
    \[
    \mathcal{J}_{\spec}^{\can, \unip} := \mathbb{L}_G^{\unip}(\mathcal{P}^{\unip} \circ \Psi^L((i_{\leq \mu^*})_* \mathcal{J}^{\can})) \in \IndCoh(\Loc_{^L G, \mathbb{Q}_p}^{\unip}).
    \]
\end{dfn}

To state the compatibility theorem, we require two additional ingredients on the spectral side:
\begin{enumerate}
    \item Let $V_\mu$ be the highest weight representation of $\hat{G}$ associated to $\mu$. We denote by $\widetilde{V}_\mu$ the vector bundle on $\Loc_{^L G, \mathbb{Q}_p}^{\unip}$ associated to $V_\mu$.
    \item Let $\CohSpr_{^L G}^{\unip}$ be the \emph{unipotent coherent Springer sheaf} defined as
    \[
    \CohSpr_{^L G}^{\unip} := (\mathfrak{q}^{\unip})_* \omega_{\Loc_{L B, \mathbb{Q}_p}^{\unip}} \in \Coh(\Loc_{^L G, \mathbb{Q}_p}^{\unip}),
    \]
    where $\mathfrak{q}^{\unip}: \Loc_{L B, \mathbb{Q}_p}^{\unip} \to \Loc_{^L G, \mathbb{Q}_p}^{\unip}$ is the natural morphism from the stack of unipotent parameters for the dual Borel.
\end{enumerate}

The coherent Springer sheaf carries a natural action of the Iwahori-Hecke algebra $\mathcal{H}_I$, while the vector bundle $\widetilde{V}_\mu$ admits a tautological action of the Weil group $W_E$. On the geometric side, the cohomology of the Shimura variety carries actions of the prime-to-$p$ Hecke algebra $\mathcal{H}_{K^p}$, the Iwahori-Hecke algebra $\mathcal{H}_I$, and the Galois group (via $W_E$).

The following theorem establishes the local-global compatibility, enabling the computation of étale cohomology via coherent sheaves. In the proof of local–global compatibility, they also use the existence of the v-sheaf local model diagram, which likewise holds in our situation.

\begin{thm}\cite[Theorem 1.10]{yang2025generic}\label{thm:local_global}
    There is an $\mathcal{H}_{K^p} \times \mathcal{H}_I \times W_E$-equivariant isomorphism
    \[
    \Hom(\widetilde{V}_\mu \otimes \CohSpr_{^L G}^{\unip}, \mathcal{J}_{\spec}^{\can, \unip}) \simeq R\Gamma_c(\Sh_{K^p I}(G, X)_{\overline{E}}, \Lambda)(d/2)[d].
    \]
\end{thm}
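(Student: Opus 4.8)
The strategy is to transcribe the proof of \cite[Theorem~1.10]{yang2025generic} to the meta-unitary setting, which is legitimate because the three structural inputs of their argument are now available: the fiber product description (Theorem~\ref{thm:cartesian_diagram} and its reduced, parahoric incarnation recorded in Remark~\ref{rmk:2_cartesian_unip}), the cohomological Mantovan formula together with the computation of $R\Gamma(\Ig_b)$ (Theorem~\ref{thm:mantovan_cohomological} and Proposition~\ref{pro:cohomology_Igusa}), and the unipotent categorical local Langlands correspondence (Theorem~\ref{thm:main_unip}). As already emphasized, properness of meta-unitary Shimura varieties makes the transcription strictly easier than the abelian-type case: $\Sh_{K^p I}(G,X)_{\overline E}$ is the adic generic fiber of the proper integral model, the reduction map $\red$ of Remark~\ref{rmk:reduction_map} is defined, $R\Gamma_c = R\Gamma$, and no well-positionedness of $\Igs$ relative to a compactification needs to be checked.

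First I would assemble the geometric side on $\Isoc_G$. Starting from the Iwahori-level version of Theorem~\ref{thm:cartesian_diagram} and the $2$-Cartesian square of Remark~\ref{rmk:2_cartesian_unip}, base change along the two Newton morphisms together with geometric Satake (which realizes the correspondence for $\CM^{\mathrm{loc}}_{\CG,\mu}$ as the Hecke operator $T_{V_\mu}$ of Definition~\ref{dfn:hecke_op_unip}) rewrites $R\Gamma_c(\Sh_{K^p I}(G,X)_{\overline E},\Lambda)(d/2)[d]$ as a mapping complex in $\IndShv^{\mathrm{unip}}_{\mathrm{f.g.}}(\Isoc_G,\Lambda)$ of the shape $\Hom\bigl(T_{V_\mu}(\mathcal{P}_I),\,(i_{\le\mu^*})_{\ast}\mathcal{J}^{\can}\bigr)$, where $\mathcal{J}^{\can}$ is the $!$-Igusa sheaf of Definition~\ref{dfn:canonical_duality} and $\mathcal{P}_I$ is the geometric ($v$-)Springer object on $\Isoc_G$ carrying the $\mathcal{H}_I$-action and coming from the Iwahori level. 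The $\mathcal{H}_{K^p}$-action enters through the transition maps in $K^p$ and the $W_E$-action through the descent datum of $T_{V_\mu}$; keeping track of these is bookkeeping. Along the way one must verify that every sheaf occurring lies in the unipotent subcategory --- this follows from Proposition~\ref{pro:cohomology_Igusa}, the torsor description $\Igs^b=[\Ig_b/\tilde G_b]$ of Lemma~\ref{lem:torsors}, and the fact that the Igusa varieties $\Ig_b$ for meta-unitary data are, Newton-stratum by Newton-stratum, of the same shape as in the Hodge-type case.

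Next I would apply the unipotent local Langlands functor $\mathbb{L}_G^{\unip}$, precomposed with $\mathcal{P}^{\unip}\circ\Psi^L$ as in Definition~\ref{dfn:spectral_Igusa}, to this identity. By construction of the spectral action, $\mathbb{L}_G^{\unip}$ intertwines the Hecke operator $T_{V_\mu}$ with tensoring by the vector bundle $\widetilde V_\mu$ on $\Loc_{^L G,\mathbb{Q}_p}^{\unip}$; by Definition~\ref{dfn:spectral_Igusa} it sends $(i_{\le\mu^*})_{\ast}\mathcal{J}^{\can}$ to $\mathcal{J}_{\spec}^{\can,\unip}$; and it identifies the geometric Iwahori--Springer object $\mathcal{P}_I$ with the unipotent coherent Springer sheaf $\CohSpr_{^L G}^{\unip}$, matching the $\mathcal{H}_I$-actions (this is the coherent Springer identification used in \cite{yang2025generic}, ultimately \cite{hemo2021unipotent}). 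Since $\mathbb{L}_G^{\unip}$ is fully faithful on the relevant full subcategory (Theorem~\ref{thm:main_unip}), it carries the geometric mapping complex isomorphically onto $\Hom\bigl(\widetilde V_\mu\otimes\CohSpr_{^L G}^{\unip},\,\mathcal{J}_{\spec}^{\can,\unip}\bigr)$ in $\IndCoh(\Loc_{^L G,\mathbb{Q}_p}^{\unip})$, compatibly with all three actions. This is the asserted isomorphism.

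The main obstacle is not a new geometric theorem but the faithful verification that the Yang--Zhu machine transfers in full: (i) unipotence of all the sheaves produced by the meta-unitary Mantovan formula, so that $\mathcal{P}^{\unip}\circ\Psi^L$ loses no information --- this rests on the explicit description of $\Ig_b$ and $\mathcal{J}^{\can}$ above; (ii) that the parahoric integral model $\CS_{\CG K^p}$ at Iwahori level, defined by the Cartesian square following Remark~\ref{rmk:generic_fiber_cartesian}, carries a reduction and a crystalline period map to $\Sht_{\CG}^{\mathrm{loc}}$ compatible with the rest of the diagram, so that the $\mathcal{H}_I$-action is realized geometrically exactly as in \cite{yang2025generic}; and (iii) the disciplined tracking of the commuting $\mathcal{H}_{K^p}\times\mathcal{H}_I\times W_E$-actions through $\mathbb{L}_G^{\unip}$ and the Satake-type identifications. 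Items (i)--(ii) follow from the constructions of Sections~4--6, and (iii) is formal exactly as in \cite{yang2025generic}; hence the proof goes through verbatim.
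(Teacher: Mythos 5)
Your proposal is correct and follows essentially the same route as the paper, which simply invokes that the proof of \cite[Theorem~1.10]{yang2025generic} transfers verbatim once the fiber product description, the Igusa-sheaf computation of Proposition~\ref{pro:cohomology_Igusa}, and the unipotent categorical correspondence of Theorem~\ref{thm:main_unip} are in place, with properness removing the well-positionedness issues. Your write-up merely spells out the Yang--Zhu steps (Satake/Hecke identification, coherent Springer matching, full faithfulness of $\mathbb{L}_G^{\unip}$) in more detail than the paper does, which is consistent with its argument.
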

\begin{thm}\label{thm:main_vanishing}
Let $(G,X)$ be a meta-unitary Shimura datum, with level $K \subset G(\BA_f)$ that is hyperspecial at $p$. Let $p > 2$ be an unramified prime. Assume that either $\Lambda = \BQ_\ell$ or $\Lambda = \BF_\ell$ with $\ell$ bigger than the Coxeter number of any simple factors of $G^\ad$. Assume that $\xi$ is generic semisimple unramified $L$-parameter. Then $R\Gamma(\Sh_K(G, X)_{\overline{E}}, \Lambda)_\xi$ is concentrated in degree $d$, where $d$ is the dimension of the Shimura variety.
\end{thm}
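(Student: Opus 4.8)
The plan is to transplant the argument of Yang--Zhu \cite{yang2025generic} to the meta-unitary setting: every step of their proof goes through once the geometric inputs assembled in this paper are in place, namely the $\ell$-cohomological smoothness of dimension $0$ of the Igusa stack (Theorem~\ref{fiberproductconjecture}, Corollary~\ref{cor:igs_properties}), the $2$-Cartesian diagram of Remark~\ref{rmk:2_cartesian_unip}, the computation of the $!$-Igusa sheaf in Proposition~\ref{pro:cohomology_Igusa}, and the local--global compatibility of Theorem~\ref{thm:local_global}; properness and smoothness of meta-unitary Shimura varieties remove any need to discuss boundary strata or well-positionedness of compactifications. First I would reduce to a spectral statement at the generic parameter. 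Fix an Iwahori $I \subset K_p$. By Theorem~\ref{thm:local_global} there is an $\mathcal{H}_{K^p}\times\mathcal{H}_I\times W_E$-equivariant identification
\[
R\Gamma_c(\Sh_{K^p I}(G,X)_{\overline E},\Lambda)(d/2)[d]\;\simeq\;\Hom\bigl(\widetilde V_\mu\otimes\CohSpr_{{}^L G}^{\unip},\ \mathcal{J}_{\spec}^{\can,\unip}\bigr)
\]
in $\IndCoh(\Loc_{{}^L G,\mathbb{Q}_p}^{\unip})$. Applying the spherical idempotent $e_{K_p}:=\tfrac{1}{[K_p:I]}\sum_{g\in K_p/I}g\in\mathcal{H}_I$ (legitimate since $\ell\nmid[K_p:I]$ under our hypotheses on $\ell$) and then localising at the generic semisimple unramified parameter $\xi$, the left side has $R\Gamma(\Sh_K(G,X)_{\overline E},\Lambda)_\xi(d/2)[d]$ as a direct summand; since a direct summand of a complex concentrated in degree $0$ is again concentrated in degree $0$, it suffices to show the $\xi$-localised right-hand $\Hom$ lies in degree $0$.

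The second step is the $t$-structure bookkeeping on the spectral side. By Theorem~\ref{thm:main_unip}(1) the functor $\mathbb{L}_{G,\xi}^{\unip}$ is an equivalence $\IndShv_{\mathrm{f.g.}}^{\unip}(\Isoc_G,\Lambda)_\xi\xrightarrow{\sim}\IndCoh(\Loc_{{}^L G,E}^{\unip})_\xi$, and (for $\Lambda=\overline{\mathbb{Q}}_\ell$) it matches the exotic $t$-structure of Definition~\ref{dfn:exotic_t_structure} with the standard one (Theorem~\ref{thm:main_unip}(3)). Using Remark~\ref{rmk:2_cartesian_unip}, Proposition~\ref{pro:cohomology_Igusa}, and Corollary~\ref{cor:igs_properties}(1), one checks that after $\xi$-localisation $(i_{\leq\mu^*})_*\mathcal{J}^{\can}$ lies in the exotic heart: concretely $(i_b)^!\mathcal{J}^{\can}\simeq R\Gamma(\Ig_b,\Lambda)$ must vanish above degree $\langle 2\rho,\nu_b\rangle$ after localising at the generic $\xi$, the sharp vanishing bound for the generic part of Igusa-variety cohomology. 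Hence $\mathcal{J}_{\spec}^{\can,\unip}$ lies in the standard heart. On the other hand $\CohSpr_{{}^L G}^{\unip}=(\mathfrak q^{\unip})_*\omega$ is a coherent sheaf in cohomological degree $0$, and tensoring by the vector bundle $\widetilde V_\mu$ is the Hecke operator $T_{V_\mu}$, which by Theorem~\ref{thm:main_unip}(2) is exotic $t$-exact on the $\xi$-block ($V_\mu$ being tilting since $\mu$ is minuscule); so $\widetilde V_\mu\otimes\CohSpr_{{}^L G}^{\unip}$ also lies in the heart at $\xi$. Therefore the derived $\Hom$ above has cohomology only in degrees $\geq 0$, giving $H^i_c(\Sh_K(G,X)_{\overline E},\Lambda)_\xi=0$ for $i<d$, hence $H^i(\Sh_K(G,X)_{\overline E},\Lambda)_\xi=0$ for $i<d$ by properness.

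The third step is the complementary bound via Poincaré duality. Since $\Sh_K(G,X)$ is smooth proper of dimension $d$, there is an $\mathcal{H}_{K_p}\times W_E$-equivariant isomorphism $H^i(\Sh_K(G,X)_{\overline E},\Lambda)\cong H^{2d-i}(\Sh_K(G,X)_{\overline E},\Lambda)^\vee(-d)$, with the spherical Hecke action on the right twisted by the standard anti-involution; localising, this reads $H^i_\xi\cong(H^{2d-i}_{\xi^\vee})^\vee(-d)$ for the contragredient parameter $\xi^\vee$. The genericity condition of Definition~\ref{dfn:generic_parameter} (no coroot on which Frobenius acts with eigenvalue $1$, i.e.\ $\alpha(s)\neq 1$ for the Satake parameter $s$ and every root $\alpha$ of $\widehat G$) is invariant under $s\mapsto s^{-1}$, so $\xi^\vee$ is again generic. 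Applying the second step to $\xi^\vee$ gives $H^{2d-i}_{\xi^\vee}=0$ when $2d-i<d$, i.e.\ $i>d$; hence $H^i_\xi=0$ for $i>d$ as well. Combined with the second step, $R\Gamma(\Sh_K(G,X)_{\overline E},\Lambda)_\xi$ is concentrated in degree $d$.

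For the case $\Lambda=\overline{\mathbb{F}}_\ell$ with $\ell$ larger than every Coxeter number of $G^{\ad}$ the same argument applies: this bound ensures that tilting $\widehat G$-modules and the $\xi$-blocks of $\IndCoh(\Loc^{\unip})$ behave as in characteristic $0$, so Theorem~\ref{thm:main_unip}(2)--(3) remain usable (alternatively, reduce modulo $\ell$ from an integral lattice); the hypotheses that $p$ be unramified and large are used only to guarantee the existence of the integral model of Section~3, of the Igusa stack and Cartesian diagram of Theorem~\ref{fiberproductconjecture}, and of Proposition~\ref{pro:cohomology_Igusa}. The hard part, and the step I expect to be the main obstacle, is Step~2: pinning down the exact exotic $t$-degree of $\mathcal{J}^{\can}$ at a generic parameter, which rests on the sharp vanishing of $R\Gamma(\Ig_b,\Lambda)_\xi$ in degrees $\leq\langle 2\rho,\nu_b\rangle$ together with the exotic $t$-exactness of $T_{V_\mu}$ on the $\xi$-block; once these are established, the remainder is a formal transport of $t$-structures through Theorems~\ref{thm:local_global} and~\ref{thm:main_unip} combined with Poincaré duality.
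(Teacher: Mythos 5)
Your overall route is the same as the paper's: the paper gives no independent argument but transplants the Yang--Zhu proof wholesale, using exactly the inputs you list (Corollary~\ref{cor:igs_properties}, Remark~\ref{rmk:2_cartesian_unip}, Proposition~\ref{pro:cohomology_Igusa}, Theorems~\ref{thm:main_unip} and~\ref{thm:local_global}, properness). However, your reconstruction of the central step is logically backwards, and this is a genuine gap. In the exotic $t$-structure of Definition~\ref{dfn:exotic_t_structure}, the bound you actually verify --- $(i_b)^!\mathcal{J}^{\can}\simeq R\Gamma(\Ig_b,\Lambda)$ concentrated in degrees $\leq \langle 2\rho,\nu_b\rangle$ --- is the $(i_b)^!$ condition, i.e.\ it puts $\mathcal{J}^{\can}$ in $\Shv^{e,\leq 0}$; moreover it holds unconditionally (Artin vanishing for the perfect affine scheme $\Ig_b$, a pro-\'etale cover of the central leaf of dimension $\langle 2\rho,\nu_b\rangle$), so no genericity enters there. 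But to conclude that $\Hom(\widetilde V_\mu\otimes\CohSpr^{\unip}_{{}^LG},\mathcal{J}^{\can,\unip}_{\spec})$ sits in degrees $\geq 0$ you need the \emph{other} half: coconnectivity of $\mathcal{J}^{\can,\unip}_{\spec}$, i.e.\ membership in $\Shv^{e,\geq 0}$, which is governed by $(i_b)^{\sharp}$ (the right adjoint of $(i_b)_!$), about which Proposition~\ref{pro:cohomology_Igusa} says nothing. That coconnectivity at a generic parameter is precisely the hard content of the Yang--Zhu argument (it is where genericity, the $t$-exactness of $T_{V_\mu}$ from Theorem~\ref{thm:main_unip}(2), and a duality statement for the Igusa sheaf, cf.\ Definition~\ref{dfn:canonical_duality}, do the work); your proposal asserts ``hence $\mathcal{J}^{\can,\unip}_{\spec}$ lies in the standard heart'' from the wrong one-sided bound, so as written the key inference fails.

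Two further soft spots: the passage from Iwahori to hyperspecial level via the averaging idempotent $e_{K_p}=\tfrac{1}{[K_p:I]}\sum_{g\in K_p/I}g$ requires $\ell\nmid[K_p:I]$, which is not implied by $\ell$ exceeding the Coxeter number of the simple factors of $G^{\ad}$ (e.g.\ $\ell$ may divide $\sum_{w\in W}p^{\ell(w)}$); Yang--Zhu handle this passage through the structure of the $\xi$-localized category rather than by averaging. And for $\Lambda=\overline{\mathbb F}_\ell$ the matching of $t$-structures in Theorem~\ref{thm:main_unip}(3) is only stated for $\overline{\mathbb Q}_\ell$, so the modular case needs the additional arguments of Yang--Zhu rather than the parenthetical remark you give. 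Your Step~3 (Poincar\'e duality, genericity preserved under the contragredient) is correct and is indeed how the complementary bound is obtained.
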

%\section{Ethical Statement}
%The author declares that this manuscript is original, has not been published previously, and is not under consideration for publication %elsewhere. The author declares no conflicts of interest. This research does not involve human participants, animals, or sensitive personal data. %The author received no external funding for this research.
\bibliography{myref}{}
\bibliographystyle{alpha}
\end{document}